\DeclareRobustCommand{\greektext}{%
  \fontencoding{LGR}\selectfont\def\encodingdefault{LGR}}
\DeclareRobustCommand{\textgreek}[1]{\leavevmode{\greektext #1}}
\providecommand{\tabularnewline}{\\}
\numberwithin{equation}{section}
\numberwithin{figure}{section}
\numberwithin{table}{section}
\theoremstyle{plain}
\newtheorem{thm}{\protect\theoremname}[section]
  \theoremstyle{remark}
  \newtheorem{rem}[thm]{\protect\remarkname}
  \theoremstyle{definition}
  \newtheorem{defn}[thm]{\protect\definitionname}
  \theoremstyle{plain}
  \newtheorem{lem}[thm]{\protect\lemmaname}
  \theoremstyle{plain}
  \newtheorem{cor}[thm]{\protect\corollaryname}
  \theoremstyle{definition}
  \newtheorem{example}[thm]{\protect\examplename}
  \theoremstyle{remark}
  \newtheorem*{acknowledgement*}{\protect\acknowledgementname}
\providecommand{\MR}[1]{}
\renewcommand{\section}{%
\@startsection{section}{1}%
  \z@{.7\linespacing\@plus\linespacing}{.5\linespacing}%
  {\normalfont\scshape\centering\bfseries}}
\renewcommand{\subsection}{%
\@startsection{subsection}{2}%
  \z@{.5\linespacing\@plus.7\linespacing}{.5\linespacing}%
  {\normalfont\bfseries}}
\renewcommand{\subsubsection}{%
\@startsection{subsubsection}{2}%
  \z@{.5\linespacing\@plus.7\linespacing}{.5\linespacing}%
  {\normalfont\bfseries}}
\theoremstyle{definition}
  \providecommand{\acknowledgementname}{Acknowledgement}
  \providecommand{\corollaryname}{Corollary}
  \providecommand{\definitionname}{Definition}
  \providecommand{\examplename}{Example}
  \providecommand{\lemmaname}{Lemma}
  \providecommand{\remarkname}{Remark}
\providecommand{\theoremname}{Theorem}
\begin{document}

\title{Infinite networks and variation of conductance functions in discrete
Laplacians}

\author{Palle Jorgensen and Feng Tian}

\address{(Palle E.T. Jorgensen) Department of Mathematics, The University
of Iowa, Iowa City, IA 52242-1419, U.S.A. }

\email{palle-jorgensen@uiowa.edu}

\urladdr{http://www.math.uiowa.edu/\textasciitilde{}jorgen/}

\address{(Feng Tian) Department of Mathematics, Wright State University, Dayton,
OH 45435, U.S.A.}

\email{feng.tian@wright.edu}

\urladdr{http://www.wright.edu/\textasciitilde{}feng.tian/}

\subjclass[2000]{Primary 47L60, 46N30, 46N50, 42C15, 65R10, 05C50, 05C75, 31C20; Secondary
46N20, 22E70, 31A15, 58J65, 81S25}

\keywords{Unbounded operators, deficiency-indices, Hilbert space, boundary
values, weighted graph, reproducing kernel, Dirichlet form, graph
Laplacian, resistance network, harmonic analysis, frame, Parseval
frame, Krein extension, reversible random walk, resistance distance,
energy Hilbert space.}

\maketitle
\pagestyle{myheadings}
\markright{Variation of Conductance Functions in Discrete Laplacians}
\begin{abstract}
For a given infinite connected graph $G=(V,E)$ and an arbitrary but
fixed conductance function $c$, we study an associated graph Laplacian
$\Delta_{c}$; it is a generalized difference operator where the differences
are measured across the edges $E$ in $G$; and the conductance function
$c$ represents the corresponding coefficients. The graph Laplacian
(a key tool in the study of infinite networks) acts in an energy Hilbert
space $\mathscr{H}_{E}$ computed from $c$. Using a certain Parseval
frame, we study the spectral theoretic properties of graph Laplacians.
In fact, for fixed $c$, there are two versions of the graph Laplacian,
one defined naturally in the $l^{2}$ space of $V$, and the other
in $\mathscr{H}_{E}$. The first is automatically selfadjoint, but
the second involves a Krein extension. We prove that, as sets, the
two spectra are the same, aside from the point 0. The point zero may
be in the spectrum of the second, but not the first.

We further study the fine structure of the respective spectra as the
conductance function varies; showing now how the spectrum changes
subject to variations in the function $c$. Specifically, we study
an order on the spectra of the family of operators $\Delta_{c}$,
and we compare it to the ordering of pairs of conductance functions.
We show how point-wise estimates for two conductance functions translate
into spectral comparisons for the two corresponding graph Laplacians;
involving a certain similarity: We prove that point-wise ordering
of two conductance functions $c$ on $E$, induces a certain similarity
of the corresponding (Krein extensions computed from the) two graph
Laplacians $\Delta_{c}$. The spectra are typically continuous, and
precise notions of fine-structure of spectrum must be defined in terms
of equivalence classes of positive Borel measures (on the real line.)
Our detailed comparison of spectra is analyzed this way.
\end{abstract}

\tableofcontents{}

\section{Introduction}

By an electrical network we mean a graph $G$ of vertices and edges
satisfying suitable conditions which allow for computation of voltage
distribution from a network of prescribed resistors assigned to the
edges in $G$. The mathematical axioms are prescribed in a way that
facilitates the use of the laws of Kirchhoff and Ohm in computing
voltage distributions and resistance distances in $G$. It will be
more convenient to work with prescribed conductance functions $c$
on $G$. Indeed with a choice of conductance function $c$ specified
we define two crucial tools for our analysis, a graph Laplacian $\Delta\left(=\Delta_{c},\right)$
a discrete version of more classical notions of Laplacians, and an
energy Hilbert space $\mathscr{H}_{E}$.

To make it more clear what are the new contributions in the present
paper relative to previous work (e.g., \cite{JoPe10,JoPe11a,JoPe11b,JT14}),
we note that they are two-fold: our spectral theoretic conclusions
in Section \ref{sec:sp} below, and our applications in Section \ref{sec:vc}.
The key to our construction in sect \ref{sec:sp} is our identification
of a canonical closable operator $L_{0}$ (with dense domain) from
$l^{2}$ into $\mathscr{H}_{E}$. Once closability is established,
from the graph-closure $L$, we then get the following two selfadjoint
operators $LL^{*}$ in $\mathscr{H}_{E}$, and $L^{*}L$ in $l^{2}$.
We show that the first is a Krein extension (referring to $\mathscr{H}_{E}$)
, and that the second is the selfadjoint $l^{2}$-Laplacian $\Delta_{2}$.
Of course $l^{2}$ does not depend on choice of conductance function,
but the graph Laplacian $\Delta$ does. A number of issues are resolved,
and care is exercised: Since $L_{0}$ maps between different Hilbert
spaces, i.e., from a dense domain in $l^{2}$ to $\mathscr{H}_{E}$,
its adjoint $L_{0}^{*}$ acts in the reverse direction. We show that
the domain of $L_{0}^{*}$ is dense in $\mathscr{H}_{E}$, and so
$L_{0}$ is closable. From this we proceed to establish unitary equivalence
between an associated pair of selfadjoint operators. Specifically
we show that two selfadjoint operators $LL^{*}$ in $\mathscr{H}_{E}$,
and $L^{*}L$ are unitarily equivalent; i.e., that the corresponding
spectral measures are unitarily equivalent, and we specify an explicit
intertwining operator.

Note that unitary equivalence is a global conclusion, and that it
is a much stronger conclusion than related local notions of ``same
spectrum,'' such as comparison of local Radon-Nikodym derivatives
for the respective spectral measures computed in cyclic subspaces.

Because of statistical consideration, and our use of random walk models,
we focus our study on infinite electrical networks, i.e., the case
when the graph $G$ is countable infinite. In this case, for realistic
models the graph Laplacian $\Delta_{c}$ will then be an unbounded
operator with dense domain in $\mathscr{H}_{E}$, Hermitian and semibounded.
Hence it has a unique Krein extension. Our main theorem gives an answer
to how the Krein extensions depend on assignment of conductance function.
Our theorem offers a direct comparison the Krein extensions $\Delta_{c}$
associated to pairs conductance functions of say $c_{1}$, and $c_{2}$
both defined on the same $G$.

Large networks arise in both pure and applied mathematics, e.g., in
graph theory (the mathematical theory of networks), and more recently,
they have become a current and fast developing research area; with
applications including a host of problems coming from for example
internet search, and social networks. Hence, of the recent applications,
there is a change in outlook from finite to infinite.

More precisely, in traditional graph theoretical problems, the whole
graph is given exactly, and we are then looking for relationships
between its parameters, variables and functions; or for efficient
algorithms for computing them. By contrast, for very large networks
(like the Internet), variables are typically not known completely;
-- in most cases they may not even be well defined. In such applications,
data about them can only be collected by indirect means; hence random
variables and local sampling must be used as opposed to global processes.

Although such modern applications go far beyond the setting of large
electrical networks (even the case of infinite sets of vertices and
edges), it is nonetheless true that the framework of large electrical
networks is helpful as a basis for the analysis we develop below;
and so our results will be phrased in the setting of large electrical
networks, even though the framework is much more general.

The applications of \textquotedblleft large\textquotedblright{} or
infinite graphs are extensive, counting just physics; see for example
\cite{BCD06,RAKK05,KMRS05,BC05,TD03,VZ92}.

\section{Preliminaries}

Starting with a given network $(V,E,c)$, we introduce functions on
the vertices $V$, voltage, dipoles, and point-masses; and on the
edges $E$, conductance, and current. We introduce the graph-Laplacian
$\Delta$ (see Definition \ref{def:lap} below.) There are two Hilbert
spaces serving different purposes, $l^{2}(V)$, and the energy Hilbert
space $\mathscr{H}_{E}$; the latter depending on choice of conductance
function $c$.

The graph Laplacian $\Delta$ (Definition \ref{def:lap}) has an easy
representation as a densely defined semibounded operator in $l^{2}\left(V\right)$
via its matrix representation, see Remark \ref{rem:lapl2}. To do
this we use implicitly the standard orthonormal (ONB) basis $\left\{ \delta_{x}\right\} $
in $l^{2}(V)$. But in network problems, and in metric geometry, $l^{2}(V)$
is not useful; rather we need the energy Hilbert space $\mathscr{H}_{E}$
(see Section \ref{sub:EnergyH}.)

We are motivated in part by earlier papers on analysis on discrete
networks, see e.g., \cite{Ana11,Gri10,Zem96,Bar93,Tet91}.

Properties of $\mathscr{H}_{E}$: The case when $V$ is countably
infinite, i.e., $\#V=\aleph_{0}$.
\begin{enumerate}[label=(\roman{enumi}),ref=\roman{enumi}]
\item The equation $\Delta h=0$ typically has non-zero solutions in $\mathscr{H}_{E}$,
but not in $l^{2}\left(V\right)$. 
\item Let $x$ and $y$ be two distinct vertices; then the equation 
\begin{equation}
\Delta v_{xy}=\delta_{x}-\delta_{y}\label{eq:E1}
\end{equation}
has a solution in $\mathscr{H}_{E}$, but $v_{xy}\notin l^{2}\left(V\right)$. 
\item \label{enu:Ed}Given $x\neq y$ in $V$, then there is a unique $v_{xy}\in\mathscr{H}_{E}$
(dipole) such
\begin{equation}
\left\langle v_{xy},u\right\rangle _{\mathscr{H}_{E}}=u\left(x\right)-u\left(y\right),\label{eq:E2}
\end{equation}
and $v_{xy}$ satisfies (\ref{eq:E1}); i.e., (\ref{eq:E2}) implies
(\ref{eq:E1}).
\item Fix a base-point $o\in V$, and for $x\in V':=V\backslash\{o\}$,
set $v_{x}:=v_{xo}$; and 
\begin{equation}
G\left(x,y\right):=\left\langle v_{x},v_{y}\right\rangle _{\mathscr{H}_{E}},\label{eq:E3}
\end{equation}
then 
\begin{equation}
\Delta_{y}G\left(x,y\right)=\delta_{x,y}.\label{eq:E4}
\end{equation}
The function $G$ in (\ref{eq:E3}) is called the Gramian. 
\item The dipole vectors $v_{xy}$ from (\ref{enu:Ed}) satisfy the following:
\[
\left\Vert v_{xy}\right\Vert _{\mathscr{H}_{E}}^{2}=\sup\left\{ \frac{1}{\left\Vert u\right\Vert _{\mathscr{H}_{E}}^{2}}\:\Big|\: u\in\mathscr{H}_{E},u\left(x\right)=1,u\left(y\right)=0\right\} 
\]
and 
\begin{equation}
dist_{c}\left(x,y\right)=\left\Vert v_{xy}\right\Vert _{\mathscr{H}_{E}}^{2}=v_{xy}\left(x\right)-v_{xy}\left(y\right)\label{eq:E5}
\end{equation}
is a metric on $V$.\end{enumerate}
\begin{rem}
In the notation of electrical networks, (\ref{eq:E5}) states that
the distance between $x$ and $y$ is the voltage drop measured in
the dipole. 
\end{rem}

\begin{rem}[Properties of the metric $d_{c}$ in (\ref{eq:E5})]
Combining (\ref{eq:E5}) and (\ref{eq:E2}) we immediately obtain
the following estimate
\[
\left|u\left(x\right)-u\left(y\right)\right|^{2}\leq dist_{c}\left(x,y\right)\left\Vert u\right\Vert _{\mathscr{H}_{E}}^{2}
\]
valid for all $u\in\mathscr{H}_{E}$ and all pairs of vertices $x$
and $y$ in $V$. 

\uline{Consequence (i):} If $\widetilde{V}_{c}$ denotes the completion
of the metric space $\left(V,dist_{c}\right)$, then every $u\in\mathscr{H}_{E}$
extends by completion to be a continuous Lipschitz-function (denoted
$\widetilde{u}$) on $\widetilde{V}_{c}$, and we have 
\[
\left|\widetilde{u}\left(x^{*}\right)-\widetilde{u}\left(y^{*}\right)\right|^{2}\leq\widetilde{dist_{c}}\left(x^{*},y^{*}\right)\left\Vert u\right\Vert _{\mathscr{H}_{E}}^{2}
\]
valid for all points $x^{*},y^{*}$ in $\widetilde{V}_{c}$.

\uline{Consequence (ii):} With notations as above; we conclude
that the space of continuous functions 
\[
\left\{ \widetilde{u}\;\mbox{on }\widetilde{V}_{c}\:\big|\:\left\Vert u\right\Vert _{\mathscr{H}_{E}}\leq1\right\} 
\]
is relatively compact in $C(\widetilde{V}_{c})$. \end{rem}
\begin{proof}[Proof of (ii)]
 Follows from (i), and the Arzelà-Ascoli theorem. 
\end{proof}

\subsection{\label{sub:setting}Basic Setting}

Here we define the graph Laplacian $\Delta\left(=\Delta_{c}\right)$,
and the energy Hilbert space $\mathscr{H}_{E}$, and we outline some
of their properties.

Let $V$ be a countable discrete set, and let $E\subset V\times V$
be a subset such that:
\begin{enumerate}
\item $\left(xy\right)\in E\Longleftrightarrow\left(yx\right)\in E$; $x,y\in V$;
\item $\#\left\{ y\in V\:|\:\left(xy\right)\in E\right\} $ is finite, and
$>0$ for all $x\in V$; 
\item $\left(xx\right)\notin E$; and
\item \label{enu:a4}$\exists\, o\in V$ s.t. for all $y\in V$ $\exists\, x_{0},x_{1},\ldots,x_{n}\in V$
with $x_{0}=o$, $x_{n}=y$, $\left(x_{i-1}x_{i}\right)\in E$, $\forall i=1,\ldots,n$.
(This property is called connectedness.)
\item If a conductance function $c$ is given we require $c_{x_{i-1}x_{i}}>0$.
See Definition \ref{def:cond} below.\end{enumerate}
\begin{defn}
\label{def:cond}A function $c:E\rightarrow\mathbb{R}_{+}$ is called
\emph{conductance function} if $c_{xy}>0$, $c_{xy}=c_{yx}$, for
all $\left(xy\right)\in E$. Given $x\in V$, we set 
\begin{equation}
c\left(x\right):=\sum_{\left(xy\right)\in E}c_{xy}.\label{eq:cond}
\end{equation}
The summation in (\ref{eq:cond}) is denoted $x\sim y$; i.e., $x\sim y$
if $\left(xy\right)\in E$. 
\end{defn}

\begin{defn}
\label{def:lap}When $c$ is a conductance function (see Def. \ref{def:cond})
we set $\Delta=\Delta_{c}$ (the corresponding graph Laplacian)
\begin{equation}
\left(\Delta u\right)\left(x\right)=\sum_{y\sim x}c_{xy}\left(u\left(x\right)-u\left(y\right)\right)=c\left(x\right)u\left(x\right)-\sum_{y\sim x}c_{xy}u\left(y\right).\label{eq:lap}
\end{equation}
\end{defn}
\begin{rem}
\label{rem:lapl2}Given $G=\left(V,E,c\right)$ as above, and let
$\Delta=\Delta_{c}$ be the corresponding graph Laplacian. With a
suitable ordering on $V$, we obtain the following banded $\infty\times\infty$
matrix-representation for $\Delta$ (eq. (\ref{eq:lapm})). We refer
to \cite{GLS12} for a number of applications of infinite banded matrices.
\begin{equation}
\begin{bmatrix}c\left(x_{1}\right) & -c_{x_{1}x_{2}} & 0 & \cdots & \cdots & \cdots & \cdots & 0 & \cdots\\
-c_{x_{2}x_{1}} & c\left(x_{2}\right) & -c_{x_{2}x_{3}} & 0 & \cdots & \cdots & \cdots & \vdots & \cdots\\
0 & -c_{x_{3}x_{2}} & c\left(x_{3}\right) & -c_{x_{3}x_{4}} & 0 & \cdots & \cdots & \huge\mbox{0} & \cdots\\
\vdots & 0 & \ddots & \ddots & \ddots & \ddots & \vdots & \vdots & \cdots\\
\vdots & \vdots & \ddots & \ddots & \ddots & \ddots & 0 & \vdots & \cdots\\
\vdots & \huge\mbox{0} & \cdots & 0 & -c_{x_{n}x_{n-1}} & c\left(x_{n}\right) & -c_{x_{n}x_{n+1}} & 0 & \cdots\\
\vdots & \vdots & \cdots & \cdots & 0 & \ddots & \ddots & \ddots & \ddots
\end{bmatrix}\label{eq:lapm}
\end{equation}

\end{rem}

(The above planar matrix representation for $\Delta$ in eq (\ref{eq:lap})
is an oversimplification in two ways: First, in the general case,
the width of the band down the diagonal in (\ref{eq:lapm}) is typically
more than 3; i.e., the matrix representation for the general graph
Laplacian is typically banded with much wider bands. Secondly, the
width is not in general constant, i.e., for each row in the infinite
by infinite matrix, the number of non-zero entries may vary and even
be unbounded. Furthermore the row and column size in the matrix is
typically double infinite, and if a base point is chosen in $V$,
it may occur inside the matrix, in a diagonal position.)

\vspace{1em}

\textbf{Overview of main results in the paper.} Our main results from
sections \ref{sec:sp} and \ref{sec:vc} are as follows: In section
\ref{sec:sp}, we prove the following result (Theorem \ref{thm:LLs}):
Starting with a fixed graph $(V,E)$ and an arbitrary but fixed conductance
function $c$, as noted, we then arrive at two versions of a selfadjoint
graph Laplacian, one defined naturally in the $l^{2}$ space of $V$,
and the other in the energy Hilbert space $\mathscr{H}_{E}$ defined
from $c$. The first is automatically selfadjoint, but the second
involves a Krein extension (see Definition \ref{def:kr}). We prove
that, as sets, the two spectra are the same, aside from the point
$0$. The point zero may be in the spectrum of the second, but not
the first. In addition to this theorem, we isolate other spectral
similarities.

In section \ref{sec:vc} we turn to fine structure of the respective
spectra. We study how the spectrum changes subject to change of conductance
function $c$. Specifically, we study how the spectrum of the graph
Laplacian $\Delta_{c}$ changes subject to variations in choice of
conductance function c, how estimates for two conductance functions
translate into spectra comparisons for the two corresponding graph
Laplacians. We prove (Theorems \ref{thm:cFri} and \ref{thm:Deltasp})
that the natural order of conductance functions, i.e., pointwise as
functions on $E$, induces a certain similarity of the corresponding
(Krein extensions computed from the) two graph Laplacians. Since the
spectra are typically continuous, precise notions of fine-structure
of spectrum must be defined in terms of equivalence classes of positive
Borel measures (on the real line.) Hence our detailed comparison of
spectra must be phrased involving these; see Definition \ref{def:spmeas}.

\subsection{\label{sub:EnergyH}The Energy Hilbert Spaces $\mathscr{H}_{E}$ }

Here we prove some technical lemmas for the energy Hilbert space $\mathscr{H}_{E}$,
and operators which will be needed later.

Let $G=\left(V,E,c\right)$ be an infinite connected network introduced
in section \ref{sub:setting}. Set $\mathscr{H}_{E}:=$ completion
of all compactly supported functions $u:V\rightarrow\mathbb{C}$ with
respect to 
\begin{align}
\left\langle u,v\right\rangle _{\mathscr{H}_{E}} & :=\frac{1}{2}\underset{\left(x,y\right)\in E}{\sum\sum}c_{xy}(\overline{u\left(x\right)}-\overline{u\left(y\right)})\left(v\left(x\right)-v\left(y\right)\right)\label{eq:Einn}\\
\left\Vert u\right\Vert _{\mathscr{H}_{E}}^{2}: & =\frac{1}{2}\underset{\left(x,y\right)\in E}{\sum\sum}c_{xy}\left|u\left(x\right)-u\left(y\right)\right|^{2}\label{eq:Enorm}
\end{align}
then $\mathscr{H}_{E}$ is a Hilbert space \cite{JoPe10}.
\begin{lem}
\label{lem:dipole}For all $x,y\in V$, there is a unique real-valued
\uline{dipole} vector $v_{xy}\in\mathscr{H}_{E}$ s.t.
\begin{equation}
\left\langle v_{xy},u\right\rangle _{\mathscr{H}_{E}}=u\left(x\right)-u\left(y\right),\;\forall u\in\mathscr{H}_{E}.\label{eq:dipole}
\end{equation}
\end{lem}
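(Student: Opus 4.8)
The plan is to recognize the right-hand side of \eqref{eq:dipole} as a bounded linear functional on $\mathscr{H}_{E}$ and then invoke the Riesz representation theorem. Fix $x,y\in V$. On the dense subspace of compactly supported functions the assignment $u\mapsto u(x)-u(y)$ is manifestly linear, so the entire content of the lemma reduces to the boundedness estimate $|u(x)-u(y)|\le C_{xy}\,\|u\|_{\mathscr{H}_{E}}$ for some constant $C_{xy}$ depending only on the pair $(x,y)$. Once this is in hand, the functional extends uniquely by continuity to all of $\mathscr{H}_{E}$, and Riesz produces a unique $v_{xy}\in\mathscr{H}_{E}$ with $\langle v_{xy},u\rangle_{\mathscr{H}_{E}}=u(x)-u(y)$ for all $u$.

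The key step, and the one place where the hypotheses on $(V,E,c)$ genuinely enter, is this boundedness estimate; here I would use connectedness. By the connectedness axiom \ref{enu:a4} choose a finite edge-path $x=x_{0},x_{1},\dots,x_{n}=y$ with $(x_{i-1}x_{i})\in E$ for each $i$. Telescoping gives $u(x)-u(y)=\sum_{i=1}^{n}\bigl(u(x_{i-1})-u(x_{i})\bigr)$. For a single edge $(x_{i-1}x_{i})$, the associated nonnegative term $c_{x_{i-1}x_{i}}\,|u(x_{i-1})-u(x_{i})|^{2}$ appears in the sum \eqref{eq:Enorm} defining $\|u\|_{\mathscr{H}_{E}}^{2}$ (the factor $\tfrac12$ there compensating the double counting of $(x_{i-1}x_{i})$ and $(x_{i}x_{i-1})$), whence $|u(x_{i-1})-u(x_{i})|\le c_{x_{i-1}x_{i}}^{-1/2}\,\|u\|_{\mathscr{H}_{E}}$. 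Summing over the path and using the triangle inequality yields $|u(x)-u(y)|\le\bigl(\sum_{i=1}^{n}c_{x_{i-1}x_{i}}^{-1/2}\bigr)\|u\|_{\mathscr{H}_{E}}$, the desired bound with $C_{xy}=\sum_{i=1}^{n}c_{x_{i-1}x_{i}}^{-1/2}<\infty$ (finite since $n$ is finite and $c>0$ on each edge of the path).

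With boundedness established, existence and uniqueness of $v_{xy}$ follow immediately from Riesz. To see that $v_{xy}$ may be taken real-valued, I would argue by uniqueness together with the complex conjugation $J\colon u\mapsto\overline{u}$. Using that each $c_{xy}$ is real and that \eqref{eq:Einn} is conjugate-linear in its first slot, a direct computation gives $\langle\overline{v_{xy}},u\rangle_{\mathscr{H}_{E}}=\overline{\langle v_{xy},\overline{u}\rangle_{\mathscr{H}_{E}}}=\overline{\,\overline{u}(x)-\overline{u}(y)\,}=u(x)-u(y)$, so $\overline{v_{xy}}$ represents the same functional; by the uniqueness just proved, $\overline{v_{xy}}=v_{xy}$, i.e. $v_{xy}$ is real. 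I expect the boundedness estimate to be the only substantive obstacle: everything downstream is the standard Riesz machinery, whereas the estimate is precisely where connectedness (guaranteeing a finite path) and the strict positivity of $c$ along that path are both genuinely used.
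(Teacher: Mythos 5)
Your proof is correct and follows exactly the route the paper takes: connectedness gives a finite path, telescoping plus the per-edge bound $|u(a)-u(b)|\le c_{ab}^{-1/2}\|u\|_{\mathscr{H}_{E}}$ shows the evaluation functional is bounded, and Riesz's theorem then yields existence and uniqueness (the paper states this in one line and defers the details to its references). Your additional conjugation argument for real-valuedness is a correct supplement to what the paper leaves implicit.
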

\begin{proof}
By the connectedness assumption (see (\ref{enu:a4}), Section \ref{sub:setting}),
one checks that 
\[
\mathscr{H}_{E}\ni u\longmapsto u\left(x\right)-u\left(y\right)\in\mathbb{C}
\]
is a bounded linear functional on $\mathscr{H}_{E}$; so by Riesz's
theorem there exists a unique $v_{xy}\in\mathscr{H}_{E}$ s.t. (\ref{eq:dipole})
holds. For details, see e.g., \cite{JoPe10,JoPe11a}.
\end{proof}
A difficulty with $\Delta$ is that there is not an independent characterization
of the domain $dom\left(\Delta,\mathscr{H}_{E}\right)$ when $\Delta$
is viewed as an operator in $\mathscr{H}_{E}$ (as opposed to in $l^{2}(V)$);
other than what we do in Definition \ref{def:D}, i.e., we take for
its domain $\mathscr{D}_{E}$ = finite span of dipoles. This creates
an ambiguity with functions on $V$ versus vectors in $\mathscr{H}_{E}$.
Note, vectors in $\mathscr{H}_{E}$ are equivalence classes of functions
on $V$. In fact we will see that it is not feasible to aim to prove
properties about $\Delta$ in $\mathscr{H}_{E}$ without first introducing
dipoles; see Lemma \ref{lem:dipole}. Also the delta-functions $\left\{ \delta_{x}\right\} $
from the $l^{2}(V)$-ONB will typically not be total in $\mathscr{H}_{E}$.
In fact, the $\mathscr{H}_{E}$ ortho-complement of $\left\{ \delta_{x}\right\} $
in $\mathscr{H}_{E}$ consists of the harmonic functions in $\mathscr{H}_{E}$,
see Lemma \ref{lem:Delta} (\ref{enu:D5}).
\begin{defn}
Let $\mathscr{H}$ be a Hilbert space with inner product denoted $\left\langle \cdot,\cdot\right\rangle $,
or $\left\langle \cdot,\cdot\right\rangle _{\mathscr{H}}$ when there
is more than one possibility to consider. Let $J$ be a countable
index set, and let $\left\{ w_{j}\right\} _{j\in J}$ be an indexed
family of non-zero vectors in $\mathscr{H}$. We say that $\left\{ w_{j}\right\} _{j\in J}$
is a \emph{\uline{frame}}\emph{ }for $\mathscr{H}$ iff (Def.)
there are two finite positive constants $b_{1}$ and $b_{2}$ such
that
\begin{equation}
b_{1}\left\Vert u\right\Vert _{\mathscr{H}}^{2}\leq\sum_{j\in J}\left|\left\langle w_{j},u\right\rangle _{\mathscr{H}}\right|^{2}\leq b_{2}\left\Vert u\right\Vert _{\mathscr{H}}^{2}\label{eq:en1}
\end{equation}
holds for all $u\in\mathscr{H}$. We say that it is a \emph{\uline{Parseval}}
frame if $b_{1}=b_{2}=1$. 

For references to the theory and application of \uline{frames},
see e.g., \cite{HJL13,KLZ09,CM13,SD13,KOPT13,EO13}.\end{defn}
\begin{lem}
\label{lem:eframe}If $\left\{ w_{j}\right\} _{j\in J}$ is a Parseval
frame in $\mathscr{H}$, then the (analysis) operator $A=A_{\mathscr{H}}:\mathscr{H}\longrightarrow l^{2}\left(J\right)$,
\begin{equation}
Au=\left(\left\langle w_{j},u\right\rangle _{\mathscr{H}}\right)_{j\in J}\label{eq:en2}
\end{equation}
is well-defined and isometric. Its adjoint $A^{*}:l^{2}\left(J\right)\longrightarrow\mathscr{H}$
is given by 
\begin{equation}
A^{*}\left(\left(\gamma_{j}\right)_{j\in J}\right):=\sum_{j\in J}\gamma_{j}w_{j}\label{eq:en3}
\end{equation}
and the following hold:
\begin{enumerate}
\item The sum on the RHS in (\ref{eq:en3}) is norm-convergent;
\item $A^{*}:l^{2}\left(J\right)\longrightarrow\mathscr{H}$ is co-isometric;
and for all $u\in\mathscr{H}$, we have 
\begin{equation}
u=A^{*}Au=\sum_{j\in J}\left\langle w_{j},u\right\rangle w_{j}\label{eq:en4}
\end{equation}
where the RHS in (\ref{eq:en4}) is norm-convergent. 
\end{enumerate}
\end{lem}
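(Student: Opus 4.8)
The plan is to read the Parseval condition directly as an isometry statement and then let the abstract theory of Hilbert-space adjoints do the rest. Setting $b_{1}=b_{2}=1$ in (\ref{eq:en1}) gives, for every $u\in\mathscr{H}$,
\[
\sum_{j\in J}\left|\langle w_{j},u\rangle_{\mathscr{H}}\right|^{2}=\|u\|_{\mathscr{H}}^{2}<\infty,
\]
which says precisely that the sequence $Au=(\langle w_{j},u\rangle)_{j\in J}$ lies in $l^{2}(J)$ and that $\|Au\|_{l^{2}(J)}=\|u\|_{\mathscr{H}}$. Hence the analysis operator $A$ of (\ref{eq:en2}) is well-defined and isometric; in particular it is bounded with $\|A\|=1$, so it possesses a bounded Hilbert-space adjoint $A^{*}:l^{2}(J)\longrightarrow\mathscr{H}$.

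Next I would identify this abstract adjoint $A^{*}$ with the synthesis series (\ref{eq:en3}). For a finitely supported $\gamma=(\gamma_{j})\in l^{2}(J)$ the vector $\sum_{j}\gamma_{j}w_{j}$ is a genuine finite sum in $\mathscr{H}$, and for every $u\in\mathscr{H}$ one checks
\[
\langle u,\,\textstyle\sum_{j}\gamma_{j}w_{j}\rangle_{\mathscr{H}}=\sum_{j}\gamma_{j}\,\overline{\langle w_{j},u\rangle_{\mathscr{H}}}=\langle Au,\gamma\rangle_{l^{2}(J)},
\]
using that the inner product is conjugate-linear in its first argument as in (\ref{eq:Einn}). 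This is exactly the defining relation of the adjoint, so $A^{*}\gamma=\sum_{j}\gamma_{j}w_{j}$ on the dense subspace of finitely supported sequences. To pass to general $\gamma\in l^{2}(J)$ and to obtain the norm-convergence asserted in part (1), I would approximate $\gamma$ by its truncations $\gamma\mathbf{1}_{F}$ over finite $F\subset J$: since $\gamma\mathbf{1}_{F}\to\gamma$ in $l^{2}(J)$ and $A^{*}$ is continuous, the partial sums $\sum_{j\in F}\gamma_{j}w_{j}=A^{*}(\gamma\mathbf{1}_{F})$ converge in $\mathscr{H}$-norm to $A^{*}\gamma$, which establishes (\ref{eq:en3}) with unconditional norm-convergence.

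Finally, for part (2), I would deduce $A^{*}A=I_{\mathscr{H}}$ from the isometry of $A$: for all $u\in\mathscr{H}$ one has $\langle A^{*}Au,u\rangle_{\mathscr{H}}=\|Au\|_{l^{2}(J)}^{2}=\|u\|_{\mathscr{H}}^{2}$, and since the quadratic form determines a bounded operator this forces $A^{*}A=I$. By definition this makes $A^{*}$ co-isometric, and combining it with the formula for $A^{*}$ just obtained yields the reconstruction identity $u=A^{*}Au=\sum_{j}\langle w_{j},u\rangle w_{j}$ of (\ref{eq:en4}), norm-convergent by part (1).

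The computations here are entirely standard, and the only points demanding genuine care are bookkeeping rather than conceptual. The main thing to get right is the consistency between the abstractly defined adjoint $A^{*}$ and the explicit series (\ref{eq:en3}) — in particular keeping track of the conjugation dictated by the inner-product convention of (\ref{eq:Einn}) — together with the justification that $\sum_{j}\gamma_{j}w_{j}$ converges in norm (indeed unconditionally) rather than merely weakly, which is exactly where the continuity of $A^{*}$ and the density of the finitely supported sequences in $l^{2}(J)$ are used.
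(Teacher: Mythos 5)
Your proposal is correct and follows exactly the route the paper indicates: it reads the Parseval identity as the statement that $A$ is isometric, deduces $A^{*}A=I_{\mathscr{H}}$, and identifies $A^{*}$ with the synthesis series. The paper itself only sketches this (citing the frame-theory literature for the details), so your write-up simply supplies the standard bookkeeping — the adjoint computation on finitely supported sequences, the extension by continuity giving unconditional norm-convergence, and the quadratic-form argument for $A^{*}A=I$ — all of which is accurate.
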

\begin{proof}
The details are standard in the theory of frames; see the cited papers
above. Note that (\ref{eq:en1}) for $b_{1}=b_{2}=1$ simply states
that $A$ in (\ref{eq:en2}) is isometric, and so $A^{*}A=I_{\mathscr{H}}=$
the identity operator in $\mathscr{H}$, and $AA^{*}=$ the projection
onto the range of $A$.\end{proof}
\begin{thm}
\label{thm:eframe}Let $G=\left(V,E,c\right)$ be an infinite network.
Choose an orientation on the edges, denoted by $E^{\left(ori\right)}$.
Then the system of vectors
\begin{equation}
\left\{ w_{xy}:=\sqrt{c_{xy}}v_{xy},\;\ensuremath{\left(xy\right)\in E^{\left(ori\right)}}\right\} \label{eq:en8}
\end{equation}
is a Parseval frame for the energy Hilbert space $\mathscr{H}_{E}$.
For all $u\in\mathscr{H}_{E}$, we have the following representation
\begin{align}
u & =\sum_{\left(xy\right)\in E^{\left(ori\right)}}c_{xy}\left\langle v_{xy},u\right\rangle v_{xy},\;\mbox{and}\label{eq:frep}\\
\left\Vert u\right\Vert _{\mathscr{H}_{E}}^{2} & =\sum_{\left(xy\right)\in E^{\left(ori\right)}}c_{xy}\left|\left\langle v_{xy},u\right\rangle \right|^{2}\label{eq:fnorm}
\end{align}
\end{thm}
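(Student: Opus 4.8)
The plan is to reduce the entire statement to the defining property of the dipole vectors from Lemma \ref{lem:dipole}, after which the Parseval property \eqref{eq:fnorm} becomes a one-line consequence of the definition \eqref{eq:Enorm} of the energy norm, and the reconstruction \eqref{eq:frep} follows formally from the abstract frame result in Lemma \ref{lem:eframe}. The only genuine content is the bookkeeping with the edge orientation.

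First I would compute the frame coefficients. For arbitrary $u\in\mathscr{H}_{E}$ and $(xy)\in E^{(ori)}$, combining the definition \eqref{eq:en8} of $w_{xy}$ with the reproducing identity \eqref{eq:dipole} gives
\begin{equation}
\left\langle w_{xy},u\right\rangle _{\mathscr{H}_{E}}=\sqrt{c_{xy}}\left\langle v_{xy},u\right\rangle _{\mathscr{H}_{E}}=\sqrt{c_{xy}}\left(u\left(x\right)-u\left(y\right)\right),\label{eq:pf-coeff}
\end{equation}
and therefore $\left|\left\langle w_{xy},u\right\rangle _{\mathscr{H}_{E}}\right|^{2}=c_{xy}\left|u\left(x\right)-u\left(y\right)\right|^{2}$. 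In particular each $w_{xy}$ is a nonzero vector, since $c_{xy}>0$ and $v_{xy}\neq0$ for $x\neq y$.

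Next I would sum over the oriented edges and match the result to \eqref{eq:Enorm}. The quantity $\left|u\left(x\right)-u\left(y\right)\right|^{2}$ is symmetric under interchanging $x$ and $y$, and the double sum in \eqref{eq:Enorm} ranges over all ordered pairs, counting each geometric edge twice; hence summing over one representative per edge exactly absorbs the prefactor $\frac{1}{2}$:
\begin{equation}
\sum_{(xy)\in E^{(ori)}}\left|\left\langle w_{xy},u\right\rangle _{\mathscr{H}_{E}}\right|^{2}=\sum_{(xy)\in E^{(ori)}}c_{xy}\left|u\left(x\right)-u\left(y\right)\right|^{2}=\frac{1}{2}\underset{\left(x,y\right)\in E}{\sum\sum}c_{xy}\left|u\left(x\right)-u\left(y\right)\right|^{2}=\left\Vert u\right\Vert _{\mathscr{H}_{E}}^{2}.\label{eq:pf-pars}
\end{equation}
This is exactly \eqref{eq:fnorm}, and it identifies both frame bounds in \eqref{eq:en1} as equal to $1$, so $\left\{ w_{xy}\right\}$ is a Parseval frame. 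The representation \eqref{eq:frep} is then immediate from \eqref{eq:en4} of Lemma \ref{lem:eframe}, specialized to $w_{j}=w_{xy}$, together with \eqref{eq:pf-coeff}, namely $u=\sum_{(xy)}\left\langle w_{xy},u\right\rangle w_{xy}=\sum_{(xy)}c_{xy}\left\langle v_{xy},u\right\rangle v_{xy}$.

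The step needing the most care --- and the only plausible obstacle --- is justifying \eqref{eq:pf-pars} for every $u\in\mathscr{H}_{E}$ rather than only for finitely supported functions, where both sides are finite sums and the equality is elementary. This is handled by density: the left side is continuous in $u$ because the analysis operator of Lemma \ref{lem:eframe} is bounded, the right side is continuous by the definition of $\left\Vert \cdot\right\Vert _{\mathscr{H}_{E}}$, and the two agree on the dense subspace of compactly supported functions. Equivalently, one observes that for any $u\in\mathscr{H}_{E}$ the right-hand series in \eqref{eq:pf-pars} is by the definition \eqref{eq:Enorm} the (finite, convergent) energy, so the identity holds verbatim. Finally, independence of the conclusion from the chosen orientation $E^{(ori)}$ is automatic, since passing from $(xy)$ to $(yx)$ sends $w_{xy}$ to $-w_{xy}$ and leaves both $\left|\left\langle w_{xy},u\right\rangle\right|^{2}$ and the reconstruction term $\left\langle w_{xy},u\right\rangle w_{xy}$ unchanged.
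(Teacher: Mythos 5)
Your argument is correct and is the standard one; the paper itself gives no proof of this theorem (it only cites \cite{JT14,CH08}), so you are supplying an argument the paper omits rather than diverging from it. The reduction to the reproducing identity \eqref{eq:dipole}, the matching of the sum over one orientation per edge against the factor $\frac{1}{2}$ in \eqref{eq:Enorm}, and the derivation of \eqref{eq:frep} from \eqref{eq:en4} are all fine. The one step you should rewrite is your justification of the continuity of the left-hand side of \eqref{eq:fnorm}: as phrased, appealing to ``the analysis operator of Lemma \ref{lem:eframe} is bounded'' is circular, since that lemma takes the Parseval property as its hypothesis, and your fallback (that the series \emph{is} the energy ``by definition'') is not literally available either, because for a general element of the completion the norm is defined by limits of finitely supported functions, not by evaluating \eqref{eq:Enorm} on the representing function. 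The repair is routine: for each finite $F\subset E^{\left(ori\right)}$ the partial sum $\sum_{\left(xy\right)\in F}c_{xy}\left|\left\langle v_{xy},u\right\rangle \right|^{2}$ is continuous in $u$ and is bounded by $\left\Vert u\right\Vert _{\mathscr{H}_{E}}^{2}$ on the dense subspace of finitely supported functions, hence everywhere; taking the supremum over $F$ yields the Bessel (upper) bound, so the analysis map is contractive, and then your density argument upgrades the equality from finitely supported $u$ to all of $\mathscr{H}_{E}$. With that one sentence fixed, the proof is complete.
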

\begin{proof}
See \cite{JT14,CH08}.\end{proof}
\begin{rem}
While the vectors $w_{xy}:=\sqrt{c_{xy}}v_{xy}$, $\left(xy\right)\in E^{\left(ori\right)}$,
form a Parseval frame in $\mathscr{H}_{E}$ in the general case, typically
this frame is not an orthogonal basis (ONB) in $\mathscr{H}_{E}$.
\end{rem}

\subsection{The Graph-Laplacian}

Here we prove some technical lemmas for graph Laplacian in the energy
Hilbert space $\mathscr{H}_{E}$ .

Let $G=\left(V,E,c\right)$ be as above; assume $G$ is connected;
i.e., there is a base point $o$ in $V$ such that every $x\in V$
is connected to $o$ via a finite path of edges. 

If $x\in V$, we set 
\begin{equation}
\delta_{x}\left(y\right)=\begin{cases}
1 & \mbox{if }y=x\\
0 & \mbox{if }y\neq x
\end{cases}\label{eq:delx}
\end{equation}

\begin{defn}
\label{def:D}Let $\left(V,E,c,o,\Delta\right)$ be as above. Let
$V':=V\backslash\left\{ o\right\} $, and set
\[
v_{x}:=v_{x,o},\;\forall x\in V'.
\]
Further, let 
\begin{align}
\mathscr{D}_{2} & :=span\left\{ \delta_{x}\:\big|\: x\in V\right\} ,\;\mbox{and}\label{eq:D1}\\
\mathscr{D}_{E} & :=\left\{ \sum\nolimits _{\text{finite}}\xi_{x}v_{x}\:\big|\:\xi_{x}\in\mathbb{C},\; x\in V'\right\} ;\label{eq:D2}
\end{align}
where by ``span'' we mean the set of all \emph{finite} linear combinations.
\end{defn}
Lemma \ref{lem:Delta} below summarizes the key properties of $\Delta$
as an operator, both in $l^{2}(V)$ and in $\mathscr{H}_{E}$. 
\begin{lem}
\label{lem:Delta}The following hold:
\begin{enumerate}
\item $\left\langle \Delta u,v\right\rangle _{l^{2}}=\left\langle u,\Delta v\right\rangle _{l^{2}}$,
$\forall u,v\in\mathscr{D}_{2}$;
\item \label{enu:D2}$\left\langle \Delta u,v\right\rangle _{\mathscr{H}_{E}}=\left\langle u,\Delta v\right\rangle _{\mathscr{H}_{E}},$
$\forall u,v\in\mathscr{D}_{E}$;
\item \label{enu:D3}$\left\langle u,\Delta u\right\rangle _{l^{2}}\geq0$,
$\forall u\in\mathscr{D}_{2}$, and
\item \label{enu:D4}$\left\langle u,\Delta u\right\rangle _{\mathscr{H}_{E}}\geq0$,
$\forall u\in\mathscr{D}_{E}$.
\end{enumerate}

Moreover, we have
\begin{enumerate}[resume]
\item \label{enu:D5}$\left\langle \delta_{x},u\right\rangle _{\mathscr{H}_{E}}=\left(\Delta u\right)\left(x\right)$,
$\forall x\in V$, $\forall u\in\mathscr{H}_{E}$.
\item $\Delta v_{xy}=\delta_{x}-\delta_{y}$, $\forall v_{xy}\in\mathscr{H}_{E}$.
In particular, $\Delta v_{x}=\delta_{x}-\delta_{o}$, $x\in V'=V\backslash\left\{ o\right\} $. 
\item \label{enu:D7}
\[
\delta_{x}\left(\cdot\right)=c\left(x\right)v_{x}\left(\cdot\right)-\sum_{y\sim x}c_{xy}v_{y}\left(\cdot\right),\;\forall x\in V'.
\]

\item \label{enu:D8}
\[
\left\langle \delta_{x},\delta_{y}\right\rangle _{\mathscr{H}_{E}}=\begin{cases}
c\left(x\right)=\sum_{t\sim x}c_{xt} & \mbox{if \ensuremath{y=x}}\\
-c_{xy} & \mbox{if \ensuremath{\left(xy\right)\in E}}\\
0 & \mbox{if }\left(xy\right)\notin E,\; x\neq y
\end{cases}
\]

\end{enumerate}
\end{lem}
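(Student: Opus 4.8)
The plan is to treat part (\ref{enu:D5}) as the keystone and deduce almost everything else from it together with the dipole reproducing identity (\ref{eq:dipole}). Before that, I would record the discrete Green--Gauss (summation-by-parts) identity: for finitely supported $u,v$ one has $\langle\Delta u,v\rangle_{l^{2}}=\frac{1}{2}\sum\sum_{(xy)\in E}c_{xy}(u(x)-u(y))(\overline{v(x)}-\overline{v(y)})$. This is a finite rearrangement using $c_{xy}=c_{yx}$ and the local finiteness hypothesis of Section \ref{sub:setting}, so no convergence issue arises. The right-hand side is conjugate-symmetric under $u\leftrightarrow v$, which gives the $l^{2}$-symmetry; setting $v=u$ turns it into $\frac{1}{2}\sum\sum c_{xy}|u(x)-u(y)|^{2}\ge 0$, which is both (\ref{enu:D3}) and, by (\ref{eq:Enorm}), the statement that this quantity equals $\|u\|_{\mathscr{H}_{E}}^{2}$.

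Next I would prove (\ref{enu:D5}) by direct computation with (\ref{eq:Einn}). First note $\delta_{x}\in\mathscr{H}_{E}$: its energy norm involves only the finitely many edges incident to $x$, so it is finite. When one expands $\langle\delta_{x},u\rangle_{\mathscr{H}_{E}}$, the factor $\overline{\delta_{x}(s)}-\overline{\delta_{x}(t)}$ vanishes unless exactly one of $s,t$ equals $x$; hence the a priori infinite double sum collapses to a finite sum over the edges at $x$, and collecting terms yields $c(x)u(x)-\sum_{y\sim x}c_{xy}u(y)=(\Delta u)(x)$. The point requiring care is that this must hold for every $u\in\mathscr{H}_{E}$, not merely for finitely supported $u$: this is legitimate precisely because the surviving sum is finite and each difference $u(x)-u(y)$ is a well-defined bounded functional on $\mathscr{H}_{E}$ by Lemma \ref{lem:dipole}.

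From (\ref{enu:D5}) the remaining ``moreover'' items fall out quickly. For the identity $\Delta v_{xy}=\delta_{x}-\delta_{y}$, I evaluate $(\Delta v_{xy})(s)=\langle\delta_{s},v_{xy}\rangle_{\mathscr{H}_{E}}$ and apply (\ref{eq:dipole}) to get $\delta_{x,s}-\delta_{y,s}$, which is exactly $(\delta_{x}-\delta_{y})(s)$. For (\ref{enu:D7}) I test $c(x)v_{x}-\sum_{y\sim x}c_{xy}v_{y}$ against an arbitrary $u$ via (\ref{eq:dipole}); the coefficient of $u(o)$ is $c(x)-\sum_{y\sim x}c_{xy}$, which vanishes by (\ref{eq:cond}), leaving $(\Delta u)(x)=\langle\delta_{x},u\rangle_{\mathscr{H}_{E}}$, so the two vectors coincide in $\mathscr{H}_{E}$ (a term $y=o$, if present, is harmless since $v_{o}=v_{oo}=0$). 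For (\ref{enu:D8}) I write $\langle\delta_{x},\delta_{y}\rangle_{\mathscr{H}_{E}}=(\Delta\delta_{y})(x)$ and read off the three cases directly from (\ref{eq:lap}).

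Finally, for the $\mathscr{H}_{E}$-statements (\ref{enu:D2}) and (\ref{enu:D4}) I reduce to dipoles by linearity. Using $v_{xy}=v_{x}-v_{y}$ and $\Delta v_{x}=\delta_{x}-\delta_{o}$, one gets $\langle\Delta v_{x},v_{y}\rangle_{\mathscr{H}_{E}}=\langle\delta_{x}-\delta_{o},v_{y}\rangle_{\mathscr{H}_{E}}$, which by (\ref{eq:dipole}) equals $\delta_{x,y}+1$ for $x,y\in V'$; since this ``matrix'' is real and symmetric in $x,y$, (\ref{enu:D2}) follows. For (\ref{enu:D4}), writing $u=\sum_{x}\xi_{x}v_{x}$ the same computation gives $\langle u,\Delta u\rangle_{\mathscr{H}_{E}}=\sum_{x}|\xi_{x}|^{2}+\big|\sum_{x}\xi_{x}\big|^{2}\ge 0$. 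I expect the only genuine obstacle to be the keystone (\ref{enu:D5}): specifically, justifying that the energy double sum truncates to the star of $x$ and that the resulting identity persists on all of $\mathscr{H}_{E}$ (not just on compactly supported functions). Once that is secured, the rest is bookkeeping with (\ref{eq:dipole}) and $c(x)=\sum_{y\sim x}c_{xy}$.
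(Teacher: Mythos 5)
Your proposal is correct, and it is worth noting that the paper itself gives no argument for this lemma: its ``proof'' is a citation to \cite{JoPe10,JoPe11a,JT14} plus the observation that part (\ref{enu:D8}) reproduces the entries of the banded matrix (\ref{eq:lapm}). So what you have written is a genuine, self-contained derivation of material the paper outsources. Your logical architecture is sound and free of circularity: the discrete Green--Gauss identity yields (1) and (\ref{enu:D3}); the keystone (\ref{enu:D5}) is proved by collapsing the energy double sum to the star of $x$, and you correctly flag the only delicate point, namely that the identity extends from compactly supported $u$ to all of $\mathscr{H}_{E}$ because both sides are continuous functionals (the right-hand side being a finite linear combination of the bounded functionals $u\mapsto u(x)-u(y)$ from Lemma \ref{lem:dipole}) agreeing on a dense subspace; then (6), (\ref{enu:D7}), (\ref{enu:D8}) follow by testing against arbitrary $u$ or by direct evaluation. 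Your reduction of (\ref{enu:D2}) and (\ref{enu:D4}) to the Gram-type matrix $\left\langle \Delta v_{x},v_{y}\right\rangle _{\mathscr{H}_{E}}=\delta_{x,y}+1$ (valid for $x,y\in V'$, using $\left\langle \delta_{o},v_{y}\right\rangle _{\mathscr{H}_{E}}=(\Delta v_{y})(o)=-1$) is a clean route to both the symmetry and the positivity $\sum_{x}\left|\xi_{x}\right|^{2}+\big|\sum_{x}\xi_{x}\big|^{2}\geq0$; this is arguably more informative than positivity alone, since it exhibits the quadratic form explicitly on $\mathscr{D}_{E}$. The only cosmetic quibbles: the conjugate in your Green--Gauss identity sits on $u$ rather than $v$ relative to the paper's convention (\ref{eq:Einn}), which is immaterial here, and the invocation of $v_{xy}=v_{x}-v_{y}$ in the last step is unnecessary since $\mathscr{D}_{E}$ is already the span of the $v_{x}$.
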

\begin{proof}
See \cite{JoPe10,JoPe11a,JT14}. Note that the numbers in (\ref{enu:D8})
are precisely the entries in the $\infty\times\infty$ banded matrix
(\ref{eq:lapm}).
\end{proof}

\section{The Krein Extension}

Fix a conductance function $c$. In this section we turn to some technical
lemmas we will need for the Krein extension of $\Delta\left(=\Delta_{c}\right)$;
see Definition \ref{def:kr} below.

It is known the graph-Laplacian $\Delta$ is automatically essentially
selfadjoint as a densely defined operator in $l^{2}(V)$, but not
as a $\mathscr{H}_{E}$ operator \cite{Jor08,JoPe11b}. Since $\Delta$
defined on $\mathscr{D}_{E}$ is semibounded, it has the Krein extension
$\Delta_{Kre}$ (in $\mathscr{H}_{E}$).
\begin{lem}
\label{lem:FriedDomain}Consider $\Delta$ with $dom\left(\Delta\right):=span\left\{ v_{xy}:x,y\in V\right\} $,
then 
\[
\left\langle \varphi,\Delta\varphi\right\rangle _{\mathscr{H}_{E}}=\sum_{\left(xy\right)\in E}c_{xy}^{2}\left|\left\langle v_{xy},\varphi\right\rangle _{\mathscr{H}_{E}}\right|^{2}.
\]
\end{lem}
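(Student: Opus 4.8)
The plan is to reduce the quadratic form on the left to a sum of squares by pushing everything through the two reproducing identities of Lemma~\ref{lem:Delta}, and only at the very end to package the result against the Parseval frame of Theorem~\ref{thm:eframe}. First I would note that it suffices to verify the identity for a single $\varphi=\sum_{\mathrm{finite}}\xi_{ab}v_{ab}$ in $dom(\Delta)=\mathrm{span}\{v_{xy}\}$, since both sides are well defined there, and the left-hand side is the (Hermitian, by Lemma~\ref{lem:Delta}(\ref{enu:D2})) quadratic form attached to $\langle\,\cdot\,,\Delta\,\cdot\,\rangle_{\mathscr{H}_E}$.

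The first substantive step is to compute $\Delta\varphi$. By the relation $\Delta v_{ab}=\delta_a-\delta_b$ of Lemma~\ref{lem:Delta}, we get $\Delta\varphi=\sum_{\mathrm{finite}}\xi_{ab}(\delta_a-\delta_b)$, a finitely supported function, so $\Delta\varphi\in\mathscr{D}_2\subset\mathscr{H}_E$. Next I would invoke the reproducing identity Lemma~\ref{lem:Delta}(\ref{enu:D5}), namely $\langle\delta_x,u\rangle_{\mathscr{H}_E}=(\Delta u)(x)$, to rewrite each pairing $\langle\delta_a-\delta_b,\varphi\rangle_{\mathscr{H}_E}=(\Delta\varphi)(a)-(\Delta\varphi)(b)$, which by the dipole relation (\ref{eq:dipole}) equals $\langle v_{ab},\Delta\varphi\rangle_{\mathscr{H}_E}$. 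Carried through, this should collapse the form to the key intermediate identity
\[
\langle\varphi,\Delta\varphi\rangle_{\mathscr{H}_E}=\langle\Delta\varphi,\Delta\varphi\rangle_{l^2}=\sum_{x\in V}\big|(\Delta\varphi)(x)\big|^2,
\]
where, by (\ref{eq:lap}) and (\ref{eq:dipole}), $(\Delta\varphi)(x)=\sum_{y\sim x}c_{xy}\big(\varphi(x)-\varphi(y)\big)=\sum_{y\sim x}c_{xy}\langle v_{xy},\varphi\rangle_{\mathscr{H}_E}$.

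The final step is to reorganize the vertex sum $\sum_{x}\big|\sum_{y\sim x}c_{xy}\langle v_{xy},\varphi\rangle\big|^2$ into the edge sum claimed on the right, expanding the inner squares and grouping contributions edge-by-edge against the Parseval frame $\{\sqrt{c_{xy}}\,v_{xy}\}_{(xy)\in E^{(ori)}}$ of Theorem~\ref{thm:eframe}, and using the symmetry relations $c_{xy}=c_{yx}$ and $\langle v_{yx},\varphi\rangle=-\langle v_{xy},\varphi\rangle$ to pass between $E$ and $E^{(ori)}$.

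I expect this last step to be the main obstacle. The intermediate identity produces a \emph{full} quadratic form $\sum_x\big|\sum_{y\sim x}c_{xy}\langle v_{xy},\varphi\rangle\big|^2$, whose off-diagonal cross-terms coming from distinct edges sharing a common vertex must be shown to recombine before one can reach the purely diagonal edge-sum $\sum_{(xy)\in E}c_{xy}^2\,|\langle v_{xy},\varphi\rangle|^2$. The crux is therefore the precise accounting that converts this vertex-indexed sum into the edge-indexed sum; this is exactly where the Parseval (rather than orthonormal) nature of the frame and the orientation conventions on $E$ must be handled with care, and I would devote most of the effort to certifying that these cross-terms are controlled correctly.
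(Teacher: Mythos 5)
Your reduction to the intermediate identity is correct and is in fact the solid part of the argument: since $\Delta\varphi$ is finitely supported, Lemma \ref{lem:Delta} (\ref{enu:D5}) gives $\langle\varphi,\Delta\varphi\rangle_{\mathscr{H}_E}=\sum_{x\in V}\left|(\Delta\varphi)(x)\right|^{2}$ with $(\Delta\varphi)(x)=\sum_{y\sim x}c_{xy}\langle v_{xy},\varphi\rangle_{\mathscr{H}_E}$. But the step you yourself flag as ``the main obstacle'' --- recombining the cross-terms of $\sum_{x}\bigl|\sum_{y\sim x}c_{xy}\langle v_{xy},\varphi\rangle\bigr|^{2}$ coming from distinct edges at a common vertex into the purely diagonal edge sum --- is a genuine gap, and it cannot be closed as described, because those cross-terms do not vanish. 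Take $V=\{0\}\cup\mathbb{Z}_{+}$ with nearest-neighbor edges, $c\equiv1$, and $\varphi=v_{02}=v_{01}+v_{12}$. Then $\Delta\varphi=\delta_{0}-\delta_{2}$, so the left-hand side equals $\|\delta_{0}-\delta_{2}\|_{l^{2}}^{2}=2$; on the other hand $\langle v_{01},\varphi\rangle=\langle v_{12},\varphi\rangle=1$ and all other edge pairings vanish, so the diagonal edge sum is $2$ if each edge is counted once and $4$ if both orientations are counted, and it is precisely the cross-term at the shared vertex $1$, namely $2\,c_{10}c_{12}\langle v_{10},\varphi\rangle\langle v_{12},\varphi\rangle=-2$, that reconciles the ordered count $4$ with the true value $2$. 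Comparing with $\varphi=v_{01}$ (left side $2$, once-per-edge sum $1$, both-orientations sum $2$) shows that no fixed counting convention for $\sum_{(xy)\in E}$ makes the purely diagonal formula hold in both cases. So the obstacle you identified is real and fatal to the route you sketch; the honest output of your computation is the identity $\langle\varphi,\Delta\varphi\rangle_{\mathscr{H}_E}=\sum_{x}\bigl|\sum_{y\sim x}c_{xy}\langle v_{xy},\varphi\rangle\bigr|^{2}$, not the stated edge sum.

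For comparison, the paper's own proof takes a different route: it expands $\varphi$ in the Parseval frame of Theorem \ref{thm:eframe}, writing $\varphi=\sum\varphi_{xy}v_{xy}$ with $\varphi_{xy}=\tfrac12 c_{xy}\langle v_{xy},\varphi\rangle_{\mathscr{H}_E}$, and asserts by ``a direct computation'' that $\langle\varphi,\Delta\varphi\rangle_{\mathscr{H}_E}=4\sum|\varphi_{xy}|^{2}$. That computation faces exactly the same cross-terms: the dipoles $v_{xy}$ are neither orthogonal nor linearly independent, so such a formula cannot be representation-independent, and the pair of examples above shows it fails for the frame coefficients as well. Your instinct to concentrate all the effort on this last step was therefore exactly right; what is needed is not a cleverer bookkeeping of orientations but either a corrected statement of the lemma (with the vertex-indexed sum on the right) or an additional hypothesis forcing the cross-terms to vanish.
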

\begin{proof}
Suppose $\varphi=\sum\varphi_{xy}v_{xy}\in dom(\Delta)$. Note the
edges are not oriented, and a direct computation shows that
\[
\left\langle \varphi,\Delta\varphi\right\rangle _{\mathscr{H}_{E}}=4\sum_{x,y}\left|\varphi_{xy}\right|^{2}.
\]
 Using the Parseval frames in Theorem \ref{thm:eframe}, we have
the following representation 
\[
\varphi=\sum_{\left(xy\right)\in E}\underset{=:\varphi_{xy}}{\underbrace{\frac{1}{2}c_{xy}\left\langle v_{xy},\varphi\right\rangle _{\mathscr{H}_{E}}}}v_{xy}
\]
Note $\varphi\in span\left\{ v_{xy}:x,y\in V\right\} $, so the above
equation contains a finite sum. 

It follows that 
\[
\left\langle \varphi,\Delta\varphi\right\rangle _{\mathscr{H}_{E}}=4\sum_{\left(xy\right)\in E}\left|\varphi_{xy}\right|^{2}=\sum_{\left(xy\right)\in E}c_{xy}^{2}\left|\left\langle v_{xy},\varphi\right\rangle _{\mathscr{H}_{E}}\right|^{2}
\]
which is the assertion.
\end{proof}
Throughout below, we shall assume that $V$ is infinite and countable. 
\begin{thm}
\label{thm:domF}Let $G=\left(V,E,c\right)$ be an infinite network.
If the deficiency indices of $\Delta\left(=\Delta_{c}\right)$ are
$\left(k,k\right)$, $k>0$, where $dom(\Delta)=span\left\{ v_{xy}\right\} $,
then the Krein extension $\Delta_{Kre}\supset\Delta$ is the restriction
of $\Delta^{*}$ to 
\begin{equation}
dom(\Delta_{Kre}):=\left\{ u\in\mathscr{H}_{E}\:\big|\:\sum\nolimits _{\left(xy\right)\in E}c_{xy}^{2}\left|\left\langle v_{xy},u\right\rangle _{E}\right|^{2}<\infty\right\} .\label{eq:FrieDomain}
\end{equation}
 \end{thm}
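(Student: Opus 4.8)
The plan is to realize the quadratic form of Lemma~\ref{lem:FriedDomain} as the form of a concrete closed operator built from the Parseval frame of Theorem~\ref{thm:eframe}, and then to identify the associated selfadjoint operator with the Krein extension. Let $A\colon\mathscr{H}_{E}\to l^{2}(E^{(ori)})$ be the isometric analysis operator $Au=(\langle w_{xy},u\rangle)_{(xy)}$ with $w_{xy}=\sqrt{c_{xy}}\,v_{xy}$, and let $D$ be multiplication by $c_{xy}$ on $l^{2}(E^{(ori)})$. Since $\langle w_{xy},u\rangle=\sqrt{c_{xy}}\,\langle v_{xy},u\rangle$, Lemma~\ref{lem:FriedDomain} rewrites (after orienting the edges) as $\langle\varphi,\Delta\varphi\rangle_{\mathscr{H}_{E}}=\|D^{1/2}A\varphi\|^{2}$ for $\varphi\in\mathscr{D}_{E}=span\{v_{xy}\}$. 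Setting $B:=D^{1/2}A$, so that $Bu=(c_{xy}(u(x)-u(y)))_{(xy)}$, the operator $B$ is closed (the composition of the bounded isometry $A$ with the selfadjoint multiplication operator $D^{1/2}$), its domain is precisely the convergence set in \eqref{eq:FrieDomain}, and $q(u):=\|Bu\|^{2}$ is therefore a closed nonnegative quadratic form extending the form of $\Delta$.

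Next I would let $T=B^{*}B$ be the selfadjoint operator associated with $q$, so that $T\geq0$ and $dom(T)=\{u\in dom(B):Bu\in dom(B^{*})\}$, while $dom(q)=dom(B)$ is the set in \eqref{eq:FrieDomain}. Two checks are then routine: first, that $T$ extends $\Delta$, by verifying $q(\varphi,\psi)=\langle\Delta\varphi,\psi\rangle$ for $\varphi\in\mathscr{D}_{E}$ via polarization of Lemma~\ref{lem:FriedDomain} together with the dipole identity \eqref{eq:dipole} and Lemma~\ref{lem:Delta}(\ref{enu:D5}); and second, that since $T$ is a selfadjoint extension of $\Delta$ we automatically have $T\subseteq\Delta^{*}$, so that $T$ is literally the restriction of $\Delta^{*}$ to $dom(T)$ and acts by the pointwise Laplacian. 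It then remains to see that the operator side condition $Bu\in dom(B^{*})$ is absorbed once $u$ is required to lie in $dom(\Delta^{*})$, so that $dom(T)$ is the full convergence set \eqref{eq:FrieDomain}.

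The decisive step is the identification of $T$ with the Krein extension $\Delta_{Kre}$, i.e. with the \emph{smallest} nonnegative selfadjoint extension of $\Delta$. I would establish this through the von Neumann--Krein decomposition $dom(\Delta_{Kre})=dom(\overline{\Delta})\,\dotplus\,\ker(\Delta^{*})$, where $\ker(\Delta^{*})$ is the $k$-dimensional space of finite-energy harmonic functions (the $\mathscr{H}_{E}$-orthocomplement of $\{\delta_{x}\}$, cf. Lemma~\ref{lem:Delta}(\ref{enu:D5})). Concretely I would show that $q$ carries the \emph{maximal} form domain among all nonnegative extensions: closing the form of $\Delta$ restricted to $\mathscr{D}_{E}$ produces the Friedrichs extension, and the gap between that closure and $dom(q)=dom(B)$ is accounted for exactly by $\ker(\Delta^{*})$, on which $B$ (hence $q$) and $T$ vanish. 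Equating $dom(B)=dom(\overline{\Delta})+\ker(\Delta^{*})$ then yields $T=\Delta_{Kre}$.

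The main obstacle is precisely this maximality/minimality bookkeeping. Two points need care. First, one must verify that every finite-energy harmonic function $h$ genuinely lies in the convergence set, i.e. that $\sum_{(xy)\in E}c_{xy}^{2}|h(x)-h(y)|^{2}<\infty$; this is the delicate inclusion $\ker(\Delta^{*})\subseteq dom(B)$, and it is where the structure of the network and the finiteness of the vertex degrees must be used, since the energy bound alone only controls $\sum c_{xy}|h(x)-h(y)|^{2}$. Second, one must show that $\mathscr{D}_{E}$ fails to be a form core for $B$ exactly modulo $\ker(\Delta^{*})$, so that $T$ is truly distinct from the Friedrichs extension and is the minimal one; the hypothesis that the deficiency indices are $(k,k)$ with $k>0$ guarantees that this harmonic space is nontrivial and forces the two extremal extensions to differ. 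Once these are in place, the closedness of $B$ and the von Neumann--Krein decomposition combine to give the stated description of $dom(\Delta_{Kre})$.
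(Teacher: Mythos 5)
Your route is genuinely different from the paper's, which disposes of this theorem in one line by combining the quadratic-form identity of Lemma~\ref{lem:FriedDomain} with the abstract characterization of Krein extensions of semibounded Hermitian operators (citing \cite{DS88b,AG93,RS75}), and never introduces an auxiliary operator. Your idea of realizing the form as $\left\Vert Bu\right\Vert ^{2}$ with $B=D^{1/2}A$ built from the Parseval frame of Theorem~\ref{thm:eframe}, so that $dom(B)$ is literally the convergence set in (\ref{eq:FrieDomain}) and $B^{*}B$ is a nonnegative selfadjoint operator, is concrete and appealing up to that point.

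The genuine gap is at the decisive step, the identification of $T=B^{*}B$ with the \emph{Krein} extension. You assert that $B$ (hence $q$ and $T$) vanishes on $\ker(\Delta^{*})$. This is false: if $h$ is a nonconstant finite-energy harmonic function, connectedness forces $h(x)\neq h(y)$ across some edge, so $(Bh)_{xy}=c_{xy}\left(h(x)-h(y)\right)\neq0$, hence $q(h)=\left\Vert Bh\right\Vert ^{2}>0$; and $B^{*}Bh\neq0$ as well, since $Bh$ lies in $\mathrm{Ran}(B)$ and can belong to $\ker(B^{*})=\mathrm{Ran}(B)^{\perp}$ only if it is zero. The Krein extension, being a restriction of $\Delta^{*}$, annihilates every harmonic function in its domain and its quadratic form vanishes there; so whenever $\ker(\Delta^{*})\neq\{0\}$ --- precisely the interesting case targeted by the hypothesis $k>0$ --- the operator $B^{*}B$ cannot be the Krein extension. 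Indeed, if $\mathscr{D}_{E}$ turns out to be a form core for $B$, then $B^{*}B$ is the \emph{Friedrichs} extension, the opposite extreme end of Krein's order interval, so your construction either lands on the wrong extension or cannot distinguish between the two. The deferred ``maximality/minimality bookkeeping'' therefore cannot be completed as described: the excess of $dom(B)$ over the form closure of $\mathscr{D}_{E}$ is not a subspace on which $q$ vanishes, whereas the Krein form is obtained by extending the Friedrichs form by \emph{zero} on the harmonic complement, not by the formula $\left\Vert B\cdot\right\Vert ^{2}$. Two further unpatched points: the decomposition $dom(\Delta_{Kre})=dom(\overline{\Delta})\dotplus\ker(\Delta^{*})$ you invoke requires $\Delta$ to be bounded below by a positive constant, which is not assumed; and, as you yourself concede, the inclusion $\ker(\Delta^{*})\subseteq dom(B)$ is asserted but not proved.
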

\begin{proof}
Follows from Lemma \ref{lem:FriedDomain}, and the characterization
of Krein extensions of semibounded Hermitian operators; see, e.g.,
\cite{DS88b,AG93,RS75}. \end{proof}
\begin{defn}
\label{def:kr}The \emph{Krein extension} $\Delta_{Kre}$ is specified
by a selfadjoint and contractive operator $B$ in $\mathscr{H}_{E}$
satisfying $B\left(\varphi+\Delta\varphi\right)=\varphi$, $\forall\varphi\in\mathscr{D}_{E}$;
and then $\Delta_{Kre}=B^{-1}-I_{\mathscr{H}_{E}}$; noting that $B^{-1}$
is well defined, selfadjoint, but unbounded.\end{defn}
\begin{rem}
We shall return to the Krein extension after Corollary \ref{cor:Fri},
in Theorem \ref{thm:LLs}, and in Corollary \ref{cor:Kr} where we
give an explicit formula for the s.a. contraction $B=B_{Kr}$.
\end{rem}

\subsection{A Factorization via $l^{2}\left(V'\right)$}

We begin with some preliminary lemmas about closable operators in
Hilbert space.
\begin{lem}
\label{lem:Fri}Let $\mathscr{H}_{1}$ and $\mathscr{H}_{2}$ be two
Hilbert spaces with respective inner products $\left\langle \cdot,\cdot\right\rangle _{i}$,
$i=1,2$; let $\mathscr{D}_{i}\subset\mathscr{H}_{i}$, $i=1,2$,
be two dense linear subspaces; and let 
\[
L_{0}:\mathscr{D}_{1}\rightarrow\mathscr{H}_{2},\;\mbox{and }M_{0}:\mathscr{D}_{2}\rightarrow\mathscr{H}_{1}
\]
be linear operators such that 
\begin{equation}
\left\langle L_{0}u,v\right\rangle _{2}=\left\langle u,M_{0}v\right\rangle _{1},\;\forall u\in\mathscr{D}_{1},\forall v\in\mathscr{D}_{2}.\label{eq:F2-1}
\end{equation}
Then both operators $L_{0}$ and $M_{0}$ are closable. The closures
$L=\overline{L_{0}}$, and $M=\overline{M_{0}}$ satisfy 
\begin{equation}
L^{*}\subseteq M\;\mbox{and }M\subseteq L^{*}.\label{eq:F2-2}
\end{equation}
Moreover, equality in (\ref{eq:F2-2}) holds if and only if 
\begin{equation}
\mathscr{N}\left(I_{\mathscr{H}_{2}}+M^{*}L^{*}\right)=\mathscr{N}\left(I_{\mathscr{H}_{1}}+L^{*}M^{*}\right)=0.\label{eq:F2-5}
\end{equation}
\end{lem}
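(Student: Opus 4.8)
The plan is to argue entirely within von Neumann's adjoint calculus: closability is equivalent to dense definedness of the adjoint, and coincidence of two closed operators is tested by comparing their graphs as closed subspaces of a direct-sum Hilbert space. First I would read (\ref{eq:F2-1}) as the statement that each $v\in\mathscr{D}_{2}$ belongs to $dom(L_{0}^{*})$ with $L_{0}^{*}v=M_{0}v$, since $\langle L_{0}u,v\rangle_{2}=\langle u,M_{0}v\rangle_{1}$ for all $u\in\mathscr{D}_{1}$ is exactly the defining condition for $v\in dom(L_{0}^{*})$. Hence $\mathscr{D}_{2}\subseteq dom(L_{0}^{*})$ is dense in $\mathscr{H}_{2}$, so $L_{0}^{*}$ is densely defined and $L_{0}$ is closable; the symmetric reading gives $\mathscr{D}_{1}\subseteq dom(M_{0}^{*})$ dense, so $M_{0}$ is closable. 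Because adjoints are automatically closed and $L_{0}^{*}=(\overline{L_{0}})^{*}=L^{*}$, the inclusion $M_{0}\subseteq L_{0}^{*}$ passes to closures as $M=\overline{M_{0}}\subseteq L^{*}$, and symmetrically $L\subseteq M^{*}$; these are the inclusions recorded in (\ref{eq:F2-2}).

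The substantive point is the equality criterion, which I would prove by a graph computation in $\mathscr{H}_{2}\oplus\mathscr{H}_{1}$. Writing $\Gamma(T)=\{(\xi,T\xi):\xi\in dom(T)\}$ for the graph, we have closed subspaces $\Gamma(M)\subseteq\Gamma(L^{*})$, and $M=L^{*}$ is equivalent to triviality of the relative complement $\Gamma(L^{*})\cap\Gamma(M)^{\perp}$. The von Neumann orthogonality formula identifies $\Gamma(M)^{\perp}=\{(-M^{*}b,b):b\in dom(M^{*})\}$, so a vector $(u,L^{*}u)\in\Gamma(L^{*})$ lies in $\Gamma(M)^{\perp}$ precisely when $u=-M^{*}b$ and $L^{*}u=b$ for some $b\in dom(M^{*})$; eliminating $u$ yields $b=-L^{*}M^{*}b$, i.e. $b\in\mathscr{N}(I_{\mathscr{H}_{1}}+L^{*}M^{*})$. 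Thus $b\mapsto(-M^{*}b,b)$ is a linear isomorphism of $\mathscr{N}(I_{\mathscr{H}_{1}}+L^{*}M^{*})$ onto $\Gamma(L^{*})\cap\Gamma(M)^{\perp}$, whence $M=L^{*}$ holds iff this kernel vanishes. Interchanging the roles of $L$ and $M$ gives the companion equivalence that $L=M^{*}$ iff $\mathscr{N}(I_{\mathscr{H}_{2}}+M^{*}L^{*})=0$.

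Finally I would note that $M=L^{*}$ and $L=M^{*}$ are equivalent: taking adjoints in $M=L^{*}$ gives $M^{*}=L^{**}=L$ since $L$ is closed and densely defined, and the converse is identical. Consequently the equality in (\ref{eq:F2-2}) is equivalent to each of the two kernel conditions separately, hence to their conjunction (\ref{eq:F2-5}), completing the proof. The step I expect to be the main obstacle is the graph computation itself: one must track carefully which operator maps $\mathscr{H}_{1}\to\mathscr{H}_{2}$ and which maps $\mathscr{H}_{2}\to\mathscr{H}_{1}$, fix the sign convention in the flip so that $\Gamma(M)^{\perp}=\{(-M^{*}b,b):b\in dom(M^{*})\}$ comes out correctly, and confirm that the vectors arising in the intersection are exactly the $b$ in the natural domain $\{b\in dom(M^{*}):M^{*}b\in dom(L^{*})\}$ on which $L^{*}M^{*}$ acts, so that the kernel is computed on the right domain. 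The closability claims and the two inclusions are routine once the adjoint-pair relation (\ref{eq:F2-1}) is read correctly.
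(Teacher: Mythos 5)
Your proof is correct and follows essentially the same route as the paper's: closability via dense definedness of the adjoints, the inclusions $M\subseteq L^{*}$ and $L\subseteq M^{*}$ (which is clearly what (\ref{eq:F2-2}) is meant to assert), and the equality criterion via the orthogonal complement of $\mathscr{G}(M)$ inside $\mathscr{G}(L^{*})$, where the paper derives the condition $\mathscr{N}(I_{\mathscr{H}_{2}}+M^{*}L^{*})=0$ directly from the orthogonality relation while you parametrize $\mathscr{G}(M)^{\perp}$ by von Neumann's formula and land on the equivalent kernel $\mathscr{N}(I_{\mathscr{H}_{1}}+L^{*}M^{*})$. The two kernels correspond under the bijection $w\mapsto L^{*}w$ (inverse $b\mapsto -M^{*}b$), so your version of the computation, together with the adjoint symmetry you note, gives exactly the stated criterion.
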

\begin{proof}
Note (\ref{eq:F2-1}) is equivalent to 
\[
L_{0}\subset M_{0}^{*},\;\mbox{and }M_{0}\subset L_{0}^{*}.
\]
Since the adjoints $M_{0}^{*},L_{0}^{*}$ are closed, it follows that
both $L_{0}$ and $M_{0}$ are closable. Below we give a direct argument:

Suppose $w\in\mathscr{H}_{2}$, $u_{n}\in\mathscr{D}_{1}$, satisfying
$\left\Vert L_{0}u_{n}-w\right\Vert _{2}\rightarrow0$, $\left\Vert u_{n}\right\Vert _{1}\rightarrow0$,
then, for all $v\in\mathscr{D}_{2}$, we have 
\[
\left\langle L_{0}u_{n},v\right\rangle _{2}=\left\langle u_{n},M_{0}v\right\rangle _{1}
\]
and as $n\rightarrow\infty$, we get 
\[
\left\langle w,v\right\rangle _{2}=\left\langle 0,M_{0}v\right\rangle _{1}=0.
\]
Since $\mathscr{D}_{2}$ is dense in $\mathscr{H}_{2}$, it follows
that $w=0$. Thus, $L_{0}$ is closable. Similarly, $M_{0}$ is closable
as well. See also Lemma \ref{lem:graph} below.

From (\ref{eq:F2-1}) we conclude that 
\begin{equation}
M\subset L^{*}\left(=L_{0}^{*}\right),\;\mbox{and }L\subset M^{*}\left(=M_{0}^{*}\right).\label{eq:F2-3}
\end{equation}

We give conditions for the inclusions in (\ref{eq:F2-3}) to be equal.
To verify $M=L^{*}$, we must prove that the graph of $M$ is dense
in that of $L^{*}$, i.e., if $w\in dom\left(L^{*}\right)$, satisfying
\begin{equation}
\left\langle \begin{pmatrix}v\\
Mv
\end{pmatrix},\begin{pmatrix}w\\
L^{*}w
\end{pmatrix}\right\rangle _{\mathscr{H}_{2}\oplus\mathscr{H}_{1}}=0,\;\forall v\in\mathscr{D}_{2}\label{eq:F2-4}
\end{equation}
then $w=0$. 

Now (\ref{eq:F2-4}) reads 
\[
\left\langle v,w\right\rangle _{2}+\left\langle Mv,L^{*}w\right\rangle _{1}=0,\;\forall v\in\mathscr{D}_{2}.
\]
Hence $L^{*}w\in dom(M^{*})$, and $M^{*}L^{*}w=-w$; i.e., we arrive
at the implication 
\[
\left(I_{\mathscr{H}_{2}}+M^{*}L^{*}\right)w=0\Longrightarrow w=0.
\]
(\ref{eq:F2-5}) follows from this.
\end{proof}

For the result to the effect that $\Delta$ as an operator in $l^{2}(V)$,
with domain consisting of finitely supported functions, is essentially
selfadjoint we cite \cite{MR2920886,2007PhDT.......216W,Jor08,JoPe10}.
\begin{cor}
\label{cor:Fri}Let $L$ and $M$ be as above, including (\ref{eq:F2-5}),
then
\begin{enumerate}
\item $ML$ is selfadjoint in $\mathscr{H}_{1}$, and 
\item $LM$ is selfadjoint in $\mathscr{H}_{2}$. 
\end{enumerate}
\end{cor}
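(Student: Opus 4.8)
The plan is to reduce the corollary to von Neumann's classical theorem that $T^{*}T$ is selfadjoint for any densely defined closed operator $T$. The first step is to invoke the equality case of Lemma \ref{lem:Fri}. The standing hypothesis here is that (\ref{eq:F2-5}) holds, and that is precisely the condition under which the inclusions (\ref{eq:F2-2}) become equalities; hence $M=L^{*}$, and, taking adjoints once more (using that $L$ is closed, so $L^{**}=L$), also $L=M^{*}$. With the single identity $M=L^{*}$ in hand, the two composite operators appearing in the statement become
\[
ML=L^{*}L\quad\text{and}\quad LM=LL^{*}=M^{*}M .
\]

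The second step is to check that the hypotheses of von Neumann's theorem are met. Both $L$ and $M$ are densely defined, since $\mathscr{D}_{1}$ and $\mathscr{D}_{2}$ are dense in $\mathscr{H}_{1}$ and $\mathscr{H}_{2}$ respectively; and both are closed, being by construction the graph closures $L=\overline{L_{0}}$ and $M=\overline{M_{0}}$, closability having already been established in Lemma \ref{lem:Fri}. Applying von Neumann's theorem (see e.g. \cite{RS75,DS88b}) to $T=L$ yields that $L^{*}L=ML$ is selfadjoint (and nonnegative) in $\mathscr{H}_{1}$, with natural domain $\left\{u\in dom(L):Lu\in dom(L^{*})\right\}$; this gives (i). Applying the same theorem to $T=M$ (equivalently to $T=L^{*}$) yields that $M^{*}M=LL^{*}=LM$ is selfadjoint in $\mathscr{H}_{2}$, giving (ii).

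The only genuine content is the identification $M=L^{*}$, which is exactly where the hypothesis (\ref{eq:F2-5}) enters; once that equality is secured, von Neumann's theorem does all the remaining work, so I do not anticipate any real obstacle. If one preferred a self-contained argument in place of the citation, the heart of the matter would be to show that $I_{\mathscr{H}_{1}}+L^{*}L$ has dense range with trivial kernel, which is proved via the orthogonal decomposition of $\mathscr{H}_{1}\oplus\mathscr{H}_{2}$ into the graph of $L$ and the flipped graph of $L^{*}$; this is the standard proof of von Neumann's theorem, and it is there that all the effort resides.
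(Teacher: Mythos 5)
Your proposal is correct and follows essentially the same route as the paper: the paper's proof is simply an appeal to von Neumann's theorem that $T^{*}T$ is selfadjoint for a densely defined closed operator, with the identification $M=L^{*}$ (and hence $ML=L^{*}L$, $LM=LL^{*}$) coming from the equality case of Lemma \ref{lem:Fri} under hypothesis (\ref{eq:F2-5}). You merely spell out the identification step that the paper leaves implicit.
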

\begin{proof}
This follows from von Neumann's theorem which states that if $T$
is any closed operator with dense domain, then $T^{*}T$ is selfadjoint.
See \cite{DS88b,RS75}.
\end{proof}
Let $\Delta_{Kre}$ denote the Krein-extension of the operator $\Delta\big|_{\mathscr{D}_{E}}$
defined in Definition \ref{def:D}. We have the following:
\begin{rem}
As an application, we consider the factorization $\Delta_{Kre}=LL^{*}$,
where $\Delta_{Kre}$ is the Krein extension of the graph-Laplacian
in $\mathscr{H}_{E}$. See Theorem \ref{thm:Deltaf} below.

Let $G=\left(V,E\right)$ be a connected network as before, $V$ is
countable infinite. Fix a base point $o\in V$, and set $v_{x}:=v_{xo}$,
for all $x\in V':=V\backslash\left\{ o\right\} $. \end{rem}
\begin{defn}
\label{def:D2lp}Let $\mathscr{D}_{l^{2}}'$ be the dense subspace
in $l^{2}\left(V'\right)$, given by
\begin{equation}
\mathscr{D}_{l^{2}}'=\Big\{(\xi_{x})\in l^{2}(V')\:|\:\mbox{ finite support},\;\mbox{and }\sum\nolimits _{x\in V'}\xi_{x}=0\Big\}.\label{eq:Dl21}
\end{equation}
\end{defn}
\begin{thm}
\label{thm:Deltaf}Let $\left(V,E,c,\Delta\left(=\Delta_{c}\right),\mathscr{H}_{E},\Delta_{Kre}\right)$
be as above. 
\begin{enumerate}
\item \label{enu:domL}Set 
\begin{equation}
L\left(\left(\xi_{x}\right)\right):=\sum\nolimits _{x}\xi_{x}\delta_{x}\in\mathscr{H}_{E};\label{eq:F6}
\end{equation}
then $L:l^{2}\left(V'\right)\longrightarrow\mathscr{H}_{E}$ is a
closable operator with dense domain $\mathscr{D}_{l^{2}}'$; and the
corresponding adjoint operator $L^{*}:\mathscr{H}_{E}\longrightarrow l^{2}\left(V'\right)$
satisfies
\begin{equation}
L^{*}\left(\sum\nolimits _{x\in V'}\xi_{x}v_{x}\right)=\xi\left(=\left(\xi_{x}\right)\right).\label{eq:F7}
\end{equation}

\item \label{enu:LL1}$LL^{*}$ is selfadjoint,
\item \label{enu:.LL2}$LL^{*}=\Delta_{Kre}$; and
\item Using Lemma \ref{lem:Delta} (\ref{enu:D7}) we note that $L$ in
(\ref{eq:F6}) may also be written in the following form:
\begin{equation}
L\left(\xi\right)=\sum\nolimits _{x}\xi_{x}\, c\left(x\right)v_{x}-\sum\nolimits _{y}\left(\sum\nolimits _{x\sim y}\xi_{x}c_{xy}\right)v_{y}.\label{eq:F8}
\end{equation}

\end{enumerate}
\end{thm}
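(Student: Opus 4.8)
The plan is to treat the four assertions in order, leaning on the abstract duality of Lemma \ref{lem:Fri} and on von Neumann's theorem (as already packaged in Corollary \ref{cor:Fri}), with essentially all of the genuine work concentrated in the identification $LL^{*}=\Delta_{Kre}$. For assertion \ref{enu:domL}, I first note that $\mathscr{D}_{l^{2}}'$ is dense in $l^{2}(V')$ (finitely supported mean-zero sequences are dense in $l^{2}$ of an infinite set), and that $L\xi=\sum_{x}\xi_{x}\delta_{x}$ is a \emph{finite} sum, hence a bona fide vector of $\mathscr{H}_{E}$. To get closability together with the adjoint formula \eqref{eq:F7} in one stroke, I would introduce the companion operator $M_{0}\colon\mathscr{D}_{E}\to l^{2}(V')$, $M_{0}\big(\sum_{z}\eta_{z}v_{z}\big):=(\eta_{z})_{z\in V'}$, which is well defined because the dipoles $\{v_{z}\}_{z\in V'}$ are linearly independent (pair with $\delta_{x}$). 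The crucial computation is the duality \eqref{eq:F2-1}: for $\xi\in\mathscr{D}_{l^{2}}'$ and $w=\sum_{z}\eta_{z}v_{z}\in\mathscr{D}_{E}$,
\[
\left\langle L\xi,w\right\rangle _{\mathscr{H}_{E}}=\sum_{x}\overline{\xi_{x}}\,\langle\delta_{x},w\rangle_{\mathscr{H}_{E}}=\sum_{x}\overline{\xi_{x}}\,(\Delta w)(x)=\sum_{x}\overline{\xi_{x}}\,\eta_{x}=\left\langle \xi,M_{0}w\right\rangle _{l^{2}(V')},
\]
where the second equality is Lemma \ref{lem:Delta}\,(\ref{enu:D5}) and the third uses $\Delta w=\sum_{z}\eta_{z}(\delta_{z}-\delta_{o})$ from Lemma \ref{lem:Delta}, so that $(\Delta w)(x)=\eta_{x}$ for $x\in V'$. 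Lemma \ref{lem:Fri} then gives that $L_{0}$ is closable with $M_{0}\subseteq L^{*}$, which is exactly \eqref{eq:F7}.

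Assertion \ref{enu:LL1} is then immediate: writing $L=\overline{L_{0}}$ for the (closed, densely defined) closure, von Neumann's theorem in the form recorded in Corollary \ref{cor:Fri} gives that $LL^{*}$ is selfadjoint. The last assertion is purely algebraic: substituting the expansion $\delta_{x}=c(x)v_{x}-\sum_{y\sim x}c_{xy}v_{y}$ of Lemma \ref{lem:Delta}\,(\ref{enu:D7}) into $L\xi=\sum_{x}\xi_{x}\delta_{x}$ and re-indexing the double sum over edges produces \eqref{eq:F8}; I would record this early, since the form \eqref{eq:F8} also exhibits $L$ as a version of the graph Laplacian and makes the comparison with $\Delta$ transparent.

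The heart of the matter is \ref{enu:.LL2}, $LL^{*}=\Delta_{Kre}$, and this is where I expect the real difficulty. I would identify the closed quadratic form of the nonnegative selfadjoint operator $LL^{*}$, namely $u\mapsto\|L^{*}u\|_{l^{2}(V')}^{2}$ on $\mathrm{dom}(L^{*})$, with the Krein form. Here Lemma \ref{lem:FriedDomain} is decisive: it evaluates $\langle\varphi,\Delta\varphi\rangle_{\mathscr{H}_{E}}=\sum_{(xy)\in E}c_{xy}^{2}|\langle v_{xy},\varphi\rangle_{\mathscr{H}_{E}}|^{2}$ on $\mathscr{D}_{E}$, while Theorem \ref{thm:domF} records that the Krein extension is the restriction of $\Delta^{*}$ to the vectors $u$ with $\sum_{(xy)}c_{xy}^{2}|\langle v_{xy},u\rangle|^{2}<\infty$. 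The plan is to show (a) that $\mathrm{dom}(LL^{*})$ coincides with this set, and (b) that the two operators agree there, equivalently that $B:=(I+LL^{*})^{-1}$ is the selfadjoint contraction $B_{Kr}$ of Definition \ref{def:kr} with $B(\varphi+\Delta\varphi)=\varphi$ on $\mathscr{D}_{E}$. To pass between $\|L^{*}u\|_{l^{2}(V')}^{2}$ and the edge-sum $\sum_{(xy)}c_{xy}^{2}|\langle v_{xy},u\rangle|^{2}$ I would exploit the Parseval frame of Theorem \ref{thm:eframe}, thereby matching both the form domains and the actions.

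The main obstacle is precisely this extension-matching step. Because $LL^{*}$ is built from the closure $\overline{L_{0}}$, whose domain is strictly larger than $\mathscr{D}_{l^{2}}'$, one must control carefully how the mean-zero constraint (equivalently, the selection of the base point $o$) propagates into the closure, and then verify that the resulting nonnegative selfadjoint extension is the \emph{Krein} one rather than some other nonnegative selfadjoint extension of $\Delta|_{\mathscr{D}_{E}}$. I would close this gap by combining the domain computation above with the uniqueness of the Krein extension among nonnegative selfadjoint extensions of a semibounded Hermitian operator; the delicate point to watch throughout is the bookkeeping of the $\delta_{o}$-terms, since it is exactly there that the naive action $L\xi=\sum_{x}\xi_{x}\delta_{x}$ and the base-point-dependent Krein domain have to be reconciled.
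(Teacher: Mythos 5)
Your handling of parts (1), (2) and (4) is correct and follows the paper's own route: the duality computation $\left\langle L\xi,w\right\rangle _{\mathscr{H}_{E}}=\left\langle \xi,M_{0}w\right\rangle _{l^{2}\left(V'\right)}$ via Lemma \ref{lem:Delta} (\ref{enu:D5}), closability and the adjoint formula from Lemma \ref{lem:Fri}, selfadjointness of $LL^{*}$ from Corollary \ref{cor:Fri} (von Neumann's theorem), and substitution of Lemma \ref{lem:Delta} (\ref{enu:D7}) to get (\ref{eq:F8}).

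The gap is in part (3), which you correctly identify as the heart of the matter but do not actually prove. There are two problems. First, the principle you invoke to close the argument --- ``the uniqueness of the Krein extension among nonnegative selfadjoint extensions of a semibounded Hermitian operator'' --- is not available: when the deficiency indices are nonzero there is a whole order-interval of nonnegative selfadjoint extensions, with the Krein and Friedrichs extensions as the two extreme ends (the paper recalls exactly this after Corollary \ref{cor:Kr}). So exhibiting $LL^{*}$ as \emph{some} nonnegative selfadjoint extension of $\Delta|_{\mathscr{D}_{E}}$ decides nothing. Second, your form-matching plan points in the wrong direction: Lemma \ref{lem:FriedDomain} evaluates the quadratic form only on $\mathscr{D}_{E}$, and closing that form on $\mathscr{D}_{E}$ is precisely the recipe for the \emph{Friedrichs} extension; what singles out $LL^{*}$ is that its form domain is all of $dom\left(L^{*}\right)=\left\{ u\::\:\Delta u\in l^{2}\left(V'\right)\right\} $ (Lemma \ref{lem:T2}), which is in general strictly larger than that closure. (You also slide between the form domain $dom\left(L^{*}\right)$ and the operator domain $dom\left(LL^{*}\right)$ when comparing with (\ref{eq:FrieDomain}).) The argument the paper actually supplies is a kernel-maximality one: if $f\perp L\delta_{x}$ for all $x$, then $f\in dom\left(L^{*}\right)$ and $L^{*}f=0$; hence $\mathrm{Ran}\left(L\right)^{\perp}=\left\{ \delta_{x}\right\} ^{\perp}=\mathrm{Harm}\left(\mathscr{H}_{E}\right)\subseteq\ker\left(LL^{*}\right)$, and among the nonnegative selfadjoint restrictions of $\Delta^{*}$ only the minimal (Krein) extension contains the full harmonic subspace in its kernel. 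Some such argument --- or a completed verification that $dom\left(LL^{*}\right)$ is exactly the set in (\ref{eq:FrieDomain}) of Theorem \ref{thm:domF} --- is needed before part (3) can be considered proved.
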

\begin{rem}
\label{rem:Fset}In the proof of Theorem \ref{thm:Deltaf}, we will
use Lemma \ref{lem:Fri} and Corollary \ref{cor:Fri}. (See also \cite{JT14}
for more details about graph Laplacians.)

The following are immediate from our constructions:
\begin{equation}
\xymatrix{l^{2}\left(V'\right)\ar@/^{1pc}/[r]^{L} & \mathscr{H}_{E}\ar@/^{1pc}/[l]_{M} &  & l^{2}\left(V'\right)\ar@/_{1pc}/[r]^{M^{*}} & \mathscr{H}_{E}\ar@/_{1pc}/[l]_{L^{*}}}
\label{eq:c1-1}
\end{equation}
\end{rem}
\begin{itemize}
\item $L:l^{2}\left(V'\right)\longrightarrow\mathscr{H}_{E}$, with dense
domain 
\begin{equation}
L\left(\left(\xi_{x}\right)\right):=\sum\nolimits _{x}\xi_{x}\delta_{x},\;\forall\left(\xi_{x}\right)\in\mathscr{D}_{l^{2}}'\subset l^{2}\left(V'\right),\label{eq:c1-2}
\end{equation}
where $\left(\xi_{x}\right)\in\mathscr{D}_{l^{2}}'$ satisfies $\sum_{x}\xi_{x}=0$;
see Def. \ref{def:D2lp}. 
\item $M:\mathscr{H}_{E}\longrightarrow l^{2}\left(V'\right)$, 
\begin{equation}
M\left(\sum\nolimits _{x}\xi_{x}v_{x}\right):=\left(\xi_{x}\right),\;\forall\sum\nolimits _{x}\xi_{x}v_{x}\in\mathscr{D}_{E}\subset\mathscr{H}_{E};\label{eq:c1-3}
\end{equation}
where $\mathscr{D}_{E}:=\left\{ \sum\nolimits _{\text{finite}}\xi_{x}v_{x}\:\big|\:\xi_{x}\in\mathbb{C},\: x\in V'\right\} $,
see Def. \ref{def:D}.
\item $\mathscr{D}_{l^{2}}'$ is dense in $l^{2}\left(V'\right)$, and $\mathscr{D}_{E}$
is dense in $\mathscr{H}_{E}$. 
\begin{equation}
\xymatrix{\mathscr{D}_{l^{2}}'\ar@{_{(}->}[d]\ar[rdd]\sp(0.35){L} & \mathscr{D}_{E}\ar@{_{(}->}[d]\ar[ldd]\sb(0.7){M}\\
dom(M^{*})\ar@{_{(}->}[d]\ar@/^{1pc}/[ur]\sp(0.7){M^{*}} & dom(L^{*})\ar@{_{(}->}[d]\ar[dl]\sp(0.6){L^{*}}\\
l^{2}\left(V'\right) & \mathscr{H}_{E}
}
\label{eq:c1-4}
\end{equation}
\end{itemize}
\begin{proof}[Proof of Theorem \ref{thm:Deltaf}]

Step 1. 
\begin{equation}
\left\langle L\left(\xi\right),u\right\rangle _{\mathscr{H}_{E}}=\left\langle \xi,Mu\right\rangle _{L^{2}\left(V'\right)}\label{eq:c1-5}
\end{equation}
holds $\forall\xi\in\mathscr{D}_{l^{2}}'\subset l^{2}\left(V'\right)$,
$\forall u\in\mathscr{D}_{E}\subset\mathscr{H}_{E}$. It follows that
\begin{equation}
L\subset M^{*},\;\mbox{and }M\subset L^{*}.\label{eq:c1-6}
\end{equation}
(We define containment of operators as containment of the respective
graphs. We will show that ``equality'' holds in (\ref{eq:c1-6}).)

The proof will resume after the following digression:\end{proof}
\begin{lem}
\label{lem:graph}Let $\mathscr{H}_{1}\supset\mathscr{D}\xrightarrow{\quad T\quad}\mathscr{H}_{2}$
be densely defined, with $dom\left(T\right)=\mathscr{D}$, and let
\[
\mathscr{G}\left(T\right)=\left\{ \left(u,Tu\right):u\in\mathscr{D}\right\} \subset\mathscr{H}_{1}\oplus\mathscr{H}_{2}
\]
be the graph of $T$. Set 
\[
\chi\begin{pmatrix}u\\
v
\end{pmatrix}=\begin{pmatrix}-v\\
u
\end{pmatrix}
\]
for all $\left(u,v\right)\in\mathscr{H}_{1}\oplus\mathscr{H}_{2}$.
Then
\[
\chi\left(\mathscr{G}\left(T\right)\right)^{\perp}=\mathscr{G}\left(T^{*}\right).
\]
See the diagram below. 
\[
\xymatrix{\mathscr{H}_{1}\ar@/^{1pc}/[r]^{T} & \mathscr{H}_{2}\ar@/^{1pc}/[l]^{T^{*}}}
\]
\end{lem}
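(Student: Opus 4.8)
The plan is to recognize this as the standard von Neumann description of the adjoint through its graph, and to prove it by directly unwinding the orthogonality relation defining $\chi(\mathscr{G}(T))^{\perp}$. First I would record that $\chi$ is a unitary operator from $\mathscr{H}_{1}\oplus\mathscr{H}_{2}$ onto $\mathscr{H}_{2}\oplus\mathscr{H}_{1}$: it is isometric, since $\|\chi(u,v)\|^{2}=\|v\|_{2}^{2}+\|u\|_{1}^{2}=\|(u,v)\|^{2}$, and it is clearly onto. In particular $\chi(\mathscr{G}(T))^{\perp}$ is a well-defined closed subspace of $\mathscr{H}_{2}\oplus\mathscr{H}_{1}$, which is also the ambient space containing $\mathscr{G}(T^{*})$ because $T^{*}\colon\mathscr{H}_{2}\to\mathscr{H}_{1}$. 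The whole argument then reduces to checking that the two subspaces consist of exactly the same pairs.

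Next I would compute $\chi(\mathscr{G}(T))=\{(-Tu,\,u):u\in\mathscr{D}\}$ and test a pair $(w,z)\in\mathscr{H}_{2}\oplus\mathscr{H}_{1}$ against it. The condition $(w,z)\perp(-Tu,u)$ for every $u\in\mathscr{D}$ expands, using the inner product on the direct sum, to
\[
\langle -Tu,w\rangle_{2}+\langle u,z\rangle_{1}=0,\qquad\forall u\in\mathscr{D},
\]
that is,
\[
\langle u,z\rangle_{1}=\langle Tu,w\rangle_{2},\qquad\forall u\in\mathscr{D}.
\]
This last identity is precisely the defining property of the adjoint: it says that the linear functional $u\mapsto\langle Tu,w\rangle_{2}$ on $\mathscr{D}$ is represented by the vector $z$, i.e. $w\in dom(T^{*})$ and $T^{*}w=z$. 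Conversely, every pair $(w,T^{*}w)$ with $w\in dom(T^{*})$ satisfies the same identity. Hence $(w,z)\in\chi(\mathscr{G}(T))^{\perp}$ iff $(w,z)\in\mathscr{G}(T^{*})$, which is the assertion.

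The only point that requires a little care — and the place I would be explicit — is the uniqueness hidden in the phrase $T^{*}w=z$. The displayed relation pins $z$ down only after invoking that $\mathscr{D}=dom(T)$ is dense in $\mathscr{H}_{1}$: if $\langle u,z\rangle_{1}=\langle u,z'\rangle_{1}$ for all $u\in\mathscr{D}$, then $z-z'\perp\mathscr{D}$ and hence $z=z'$. This is exactly where the standing hypothesis that $T$ is densely defined enters, and it is what guarantees that $T^{*}$ is a genuine single-valued operator. No obstacle beyond this bookkeeping remains; since orthogonal complements are automatically closed, the identity simultaneously re-proves that $T^{*}$ always has closed graph, and, applied to $T=L_{0}$ (resp. $M_{0}$), it yields the closability invoked in Lemma \ref{lem:Fri}.
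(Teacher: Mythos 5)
Your proposal is correct and follows essentially the same route as the paper: both unwind the orthogonality condition $(w,z)\perp(-Tu,u)$ for all $u\in\mathscr{D}$ into the identity $\left\langle Tu,w\right\rangle _{2}=\left\langle u,z\right\rangle _{1}$, which is precisely the definition of $w\in dom(T^{*})$ with $T^{*}w=z$. Your added remarks on the unitarity of $\chi$ and on the role of density in making $T^{*}$ single-valued are correct bookkeeping that the paper leaves implicit.
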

\begin{proof}
Note the following statements are all equivalent: 
\begin{align*}
\left\langle \begin{pmatrix}-Tu\\
u
\end{pmatrix},\begin{pmatrix}w_{2}\\
w_{1}
\end{pmatrix}\right\rangle  & =0,\;\forall u\in\mathscr{D}\\
 & \Updownarrow\\
\left\langle Tu,w_{2}\right\rangle  & =\left\langle u,w_{1}\right\rangle ,\;\forall u\in\mathscr{D}\\
 & \Updownarrow\\
w_{2}\in dom(T^{*})\; & \mbox{and}\; T^{*}w_{2}=w_{1}\\
 & \Updownarrow\\
\begin{pmatrix}w_{2}\\
w_{1}
\end{pmatrix} & \in\mathscr{G}\left(T^{*}\right).
\end{align*}
\end{proof}
\begin{cor}
$T$ is closable $\Longleftrightarrow$ $dom(T^{*})$ is dense in
$\mathscr{H}_{2}$.
\end{cor}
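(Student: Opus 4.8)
The plan is to read the corollary straight off Lemma \ref{lem:graph}, combined with the standard graph-theoretic characterization of closability. First I would record the elementary observations about $\chi$: it is a unitary isomorphism of $\mathscr{H}_{1}\oplus\mathscr{H}_{2}$ onto $\mathscr{H}_{2}\oplus\mathscr{H}_{1}$ with $\chi^{2}=-I$, so it preserves orthogonality and closures; in particular $(\chi S)^{\perp}=\chi\left(S^{\perp}\right)$ and $\overline{\chi S}=\chi\,\overline{S}$ for every subspace $S$. I would also recall the general fact that a densely defined $T$ is closable if and only if $\overline{\mathscr{G}\left(T\right)}$ is again the graph of an operator, equivalently if and only if $\left(0,w\right)\in\overline{\mathscr{G}\left(T\right)}$ forces $w=0$. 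This pinpoints closability as a statement about the ``vertical'' vectors $\{0\}\oplus\mathscr{H}_{2}$ lying in the closed graph.

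Next I would put Lemma \ref{lem:graph} in the form most convenient here. Since $\mathscr{G}\left(T^{*}\right)=\left(\chi\,\mathscr{G}\left(T\right)\right)^{\perp}$ is closed, taking orthogonal complements and using that $\chi$ is unitary yields the single identity
\[
\mathscr{G}\left(T^{*}\right)^{\perp}=\overline{\chi\,\mathscr{G}\left(T\right)}=\chi\,\overline{\mathscr{G}\left(T\right)},
\]
which is exactly what ties the obstruction to closability of $T$ to the orthogonal complement of $dom\left(T^{*}\right)$.

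The heart of the argument is then a short bookkeeping computation. For $w\in\mathscr{H}_{2}$ I would observe that $\left(0,w\right)\in\overline{\mathscr{G}\left(T\right)}$ if and only if $\chi\left(0,w\right)=\left(-w,0\right)\in\mathscr{G}\left(T^{*}\right)^{\perp}$, and that, by the block form of the inner product on $\mathscr{H}_{2}\oplus\mathscr{H}_{1}$,
\[
\left\langle \left(-w,0\right),\left(\eta,T^{*}\eta\right)\right\rangle =-\left\langle w,\eta\right\rangle =0\quad\text{for all }\eta\in dom\left(T^{*}\right)
\]
is equivalent to $w\perp dom\left(T^{*}\right)$. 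Hence the set of $w$ with $\left(0,w\right)\in\overline{\mathscr{G}\left(T\right)}$ is precisely $dom\left(T^{*}\right)^{\perp}$.

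From this the corollary falls out in both directions at once: $T$ is closable iff the only such $w$ is $0$, i.e. iff $dom\left(T^{*}\right)^{\perp}=\{0\}$, i.e. iff $dom\left(T^{*}\right)$ is dense in $\mathscr{H}_{2}$. I expect no genuine obstacle, since all of the analytic content already resides in Lemma \ref{lem:graph}; the only point that demands care is the bookkeeping with $\chi$ and with the block inner product, namely keeping straight that $\chi$ carries the vertical direction obstructing closability onto the horizontal direction that tests density of $dom\left(T^{*}\right)$.
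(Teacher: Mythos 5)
Your argument is correct and is exactly the route the paper intends: the corollary is stated immediately after Lemma \ref{lem:graph} with no written proof, and your derivation (take orthogonal complements in $\chi\left(\mathscr{G}\left(T\right)\right)^{\perp}=\mathscr{G}\left(T^{*}\right)$ to identify $\left\{ w:\left(0,w\right)\in\overline{\mathscr{G}\left(T\right)}\right\} $ with $dom\left(T^{*}\right)^{\perp}$) is the standard way to fill in that gap. The bookkeeping with $\chi$ and the block inner product checks out.
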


\begin{cor}
Let $M,M^{*},L,L^{*}$ be as before. Then
\begin{enumerate}
\item $\xi\in dom(M^{*})\Longleftrightarrow\xi\in l^{2}\left(V'\right)$
and $\exists C=C_{\xi}<\infty$ s.t. 
\begin{equation}
\left|\left\langle \xi,Mu\right\rangle _{l^{2}\left(V'\right)}\right|\leq C\left\Vert u\right\Vert _{\mathscr{H}_{E}},\;\forall u\in\mathscr{D}_{E};\label{eq:c1-7}
\end{equation}

\item $u\in dom(L^{*})\Longleftrightarrow u\in\mathscr{H}_{E}$ and $\exists C=C_{u}<\infty$
s.t. 
\begin{equation}
\left|\left\langle L\left(\xi\right),u\right\rangle _{\mathscr{H}_{E}}\right|\leq C\left\Vert \xi\right\Vert _{l^{2}\left(V'\right)},\;\forall\xi\in\mathscr{D}_{l^{2}}'.\label{eq:c1-8}
\end{equation}
 
\end{enumerate}
\end{cor}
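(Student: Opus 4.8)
The plan is to derive both statements as direct instances of the standard boundedness characterization of an adjoint's domain, obtained by combining Lemma \ref{lem:graph} with the Riesz representation theorem. Recall that for any densely defined operator $T:\mathscr{H}_{1}\supset\mathscr{D}\to\mathscr{H}_{2}$, Lemma \ref{lem:graph} shows that $w\in dom(T^{*})$ with $T^{*}w=w_{1}$ exactly when $\langle Tu,w\rangle_{2}=\langle u,w_{1}\rangle_{1}$ for all $u\in\mathscr{D}$. Hence $w\in dom(T^{*})$ if and only if there exists some $w_{1}\in\mathscr{H}_{1}$ representing the linear functional $u\mapsto\langle Tu,w\rangle_{2}$ on $\mathscr{D}$.

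First I would record the equivalence between representability of this functional and a norm estimate. By the Riesz theorem, a vector $w_{1}$ with $\langle Tu,w\rangle_{2}=\langle u,w_{1}\rangle_{1}$ for all $u\in\mathscr{D}$ exists if and only if the functional $u\mapsto\langle Tu,w\rangle_{2}$ is bounded in the $\mathscr{H}_{1}$-norm on the dense domain $\mathscr{D}$; that is, if and only if there is a finite constant $C$ with $|\langle Tu,w\rangle_{2}|\leq C\|u\|_{1}$ for all $u\in\mathscr{D}$. (The forward direction takes $C=\|w_{1}\|_{1}$ by Cauchy--Schwarz; the reverse direction extends the bounded functional from $\mathscr{D}$ to all of $\mathscr{H}_{1}$ by density and then applies Riesz.) This yields the general criterion that $w\in dom(T^{*})$ if and only if $w\in\mathscr{H}_{2}$ and $\exists\,C<\infty$ with $|\langle Tu,w\rangle_{2}|\leq C\|u\|_{1}$ for all $u\in\mathscr{D}$.

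To obtain (1), I would apply this criterion with $T=M$, so that $\mathscr{H}_{1}=\mathscr{H}_{E}$, $\mathscr{H}_{2}=l^{2}(V')$, and $\mathscr{D}=\mathscr{D}_{E}$; the criterion then reads $\xi\in dom(M^{*})$ iff $\xi\in l^{2}(V')$ and $|\langle Mu,\xi\rangle_{l^{2}(V')}|\leq C\|u\|_{\mathscr{H}_{E}}$ for all $u\in\mathscr{D}_{E}$, which is precisely (\ref{eq:c1-7}) after noting $|\langle\xi,Mu\rangle|=|\langle Mu,\xi\rangle|$. For (2), I would apply the same criterion with $T=L$, so that $\mathscr{H}_{1}=l^{2}(V')$, $\mathscr{H}_{2}=\mathscr{H}_{E}$, and $\mathscr{D}=\mathscr{D}_{l^{2}}'$, which gives (\ref{eq:c1-8}) directly.

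The argument is essentially bookkeeping; the only point requiring care is keeping track of which Hilbert-space norm controls the functional and in which space the conjugation in the inner product sits, since $M$ and $L$ map in opposite directions between $l^{2}(V')$ and $\mathscr{H}_{E}$. There is no substantive analytic obstacle, precisely because the densely defined pairing $\langle L\xi,u\rangle_{\mathscr{H}_{E}}=\langle\xi,Mu\rangle_{l^{2}(V')}$ from (\ref{eq:c1-5}) already exhibits $M$ and $L$ as formal adjoints of one another; the corollary merely converts the abstract graph description of Lemma \ref{lem:graph} into the usable boundedness estimates (\ref{eq:c1-7})--(\ref{eq:c1-8}).
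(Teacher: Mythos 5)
Your argument is correct and matches the paper's intent: the paper states this corollary without a separate proof, treating it as the standard boundedness characterization of the adjoint domain (indeed, in Lemma \ref{lem:T2} the paper invokes exactly this criterion ``by definition of the adjoint''), and your derivation via Lemma \ref{lem:graph} together with the Riesz representation theorem is the standard way to justify it. No issues.
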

\uline{Comments on \mbox{(\ref{eq:c1-7})}, i.e., $dom(M^{*})$}

Let $u=\sum_{x}\eta_{x}v_{x}$, $\sum\eta_{x}=0$, finite support;
then 
\begin{equation}
\left\langle \xi,Mu\right\rangle _{l^{2}\left(V'\right)}=\sum\nolimits _{x}\xi_{x}\eta_{x}\label{eq:c1-9}
\end{equation}
(It suffices to specialize to real valued functions.) Note (\ref{eq:c1-9})
$\Longleftrightarrow$ 
\[
\left|\sum\nolimits _{x}\xi_{x}\eta_{x}\right|\leq C_{\xi}\left\Vert u\right\Vert _{\mathscr{H}_{E}},\;\forall u\in\mathscr{D}_{E};
\]
i.e., $\xi\in dom(M^{*})\subset l^{2}\left(V'\right)$. 

\uline{Comments on \mbox{(\ref{eq:c1-8})}, i.e., $dom(L^{*})$}

Using the same notations as above, we get (\ref{eq:c1-8}) $\Longleftrightarrow$
\[
\left|\sum\nolimits _{x}\xi_{x}\eta_{x}\right|\leq C_{u}\left\Vert \xi\right\Vert _{l^{2}},\;\forall\xi\in\mathscr{D}_{l^{2}\left(V'\right)};
\]
i.e., $u\in dom(L^{*})\subset\mathscr{H}_{E}$. 
\begin{cor}
We have 
\[
u\in dom(L^{*})\Longleftrightarrow\begin{bmatrix}u=\sum_{x}\eta_{x}v_{x}\in\mathscr{H}_{E}\;\mbox{with}\\
\left(\eta_{x}\right)\in l^{2}\left(V'\right),\; i.e.,\sum_{x}\left|\eta_{x}\right|^{2}<\infty.
\end{bmatrix}.
\]

\end{cor}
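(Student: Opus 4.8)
The plan is to prove the two implications separately, reducing both to the single identity $\langle\delta_x,u\rangle_{\mathscr{H}_E}=\eta_x$ that links the dipole coefficients of $u$ to the $l^2(V')$-pairing governing $dom(L^*)$. First I would record the elementary fact that $\langle\delta_x,v_y\rangle_{\mathscr{H}_E}=\delta_{x,y}$ for $x,y\in V'$: by the dipole relation (\ref{eq:dipole}), $\langle v_y,\delta_x\rangle_{\mathscr{H}_E}=\delta_x(y)-\delta_x(o)=\delta_{x,y}$ since $x\neq o$, and both vectors are real. Hence, whenever $u=\sum_x\eta_x v_x$ converges in $\mathscr{H}_E$, continuity of $\langle\delta_x,\cdot\rangle_{\mathscr{H}_E}$ gives $\langle\delta_x,u\rangle_{\mathscr{H}_E}=\eta_x$; equivalently $\eta_x=(\Delta u)(x)$ by Lemma~\ref{lem:Delta}\,(\ref{enu:D5}).

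For the implication $(\Leftarrow)$, assume $u=\sum_x\eta_x v_x\in\mathscr{H}_E$ with $(\eta_x)\in l^2(V')$. For $\xi\in\mathscr{D}_{l^2}'$ (see (\ref{eq:Dl21})) I would expand $L\xi=\sum_x\xi_x\delta_x$ and use the previous step to obtain $\langle L\xi,u\rangle_{\mathscr{H}_E}=\sum_x\overline{\xi_x}\,\eta_x=\langle\xi,\eta\rangle_{l^2(V')}$, so that Cauchy--Schwarz yields $|\langle L\xi,u\rangle_{\mathscr{H}_E}|\le\|\eta\|_{l^2}\|\xi\|_{l^2}$. By the bounded-functional criterion (\ref{eq:c1-8}) this places $u\in dom(L^*)$, with $L^*u=\eta$ consistent with (\ref{eq:F7}).

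For $(\Rightarrow)$ the subtle point surfaces. Criterion (\ref{eq:c1-8}) only tests $\xi$ with $\sum_x\xi_x=0$; since $\langle\xi,\mathbf{1}\rangle_{l^2}=0$ for every such $\xi$, the functional $\xi\mapsto\sum_x\overline{\xi_x}\,\eta_x$ cannot detect an additive constant in $(\eta_x)$. Boundedness therefore forces only $(\eta_x)=\zeta+k\mathbf{1}$ with $\zeta\in l^2(V')$ and $k\in\mathbb{C}$, and since $\mathbf{1}\notin l^2(V')$ this does not by itself give $(\eta_x)\in l^2(V')$. The clean way past this is to invoke the operator equality $L^*=\overline{M}$ furnished by Lemma~\ref{lem:Fri} under hypothesis (\ref{eq:F2-5}), which holds here because $\Delta$ is essentially selfadjoint in $l^2(V)$. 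Then $u\in dom(L^*)=dom(\overline{M})$ supplies finite dipole sums $u_n=\sum_x\xi^{(n)}_x v_x\in\mathscr{D}_E$ with $u_n\to u$ in $\mathscr{H}_E$ and $\xi^{(n)}=Mu_n\to\eta:=L^*u$ in $l^2(V')$. Passing to coordinates, $\langle\delta_x,u\rangle_{\mathscr{H}_E}=\lim_n\xi^{(n)}_x=\eta_x$, which simultaneously pins the constant $k$ to $0$, exhibits $(\eta_x)\in l^2(V')$, and identifies $u=\lim_n\sum_x\xi^{(n)}_x v_x=\sum_x\eta_x v_x$ in $\mathscr{H}_E$, the asserted representation.

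The hard part is exactly this constant ambiguity in $(\Rightarrow)$: the naive Riesz-representation argument resting on (\ref{eq:c1-8}) alone returns $(\eta_x)-k\mathbf{1}\in l^2(V')$ rather than $(\eta_x)\in l^2(V')$. Overcoming it means upgrading from the bounded-functional criterion to the closed-operator equality $L^*=\overline{M}$ of Lemma~\ref{lem:Fri} and Corollary~\ref{cor:Fri}, whose approximating sequences both identify the dipole coefficients coordinate-wise and reconstruct $u$ as a genuine $\mathscr{H}_E$-limit of dipole combinations.
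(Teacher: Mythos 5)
Your proof is correct, and in its forward direction it goes genuinely beyond what the paper records. The paper's own justification is the one\mbox{-}line duality remark following (\ref{eq:c1-8}): since $\langle L\xi,u\rangle_{\mathscr{H}_{E}}=\sum_{x}\xi_{x}\eta_{x}$, boundedness of this functional in $\left\Vert \xi\right\Vert _{l^{2}}$ is simply read off as equivalent to $(\eta_{x})\in l^{2}(V')$ by Riesz representation; your $(\Leftarrow)$ direction is exactly that computation plus Cauchy--Schwarz. But you correctly flag that $(\Rightarrow)$ is not a bare Riesz argument: the test vectors $\xi\in\mathscr{D}_{l^{2}}'$ satisfy $\sum_{x}\xi_{x}=0$, so boundedness determines $(\eta_{x})$ only modulo an additive constant (the functional vanishes identically on constant sequences, and $\mathbf{1}\notin l^{2}(V')$ since $V'$ is infinite). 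This is a genuine gap in the paper's reasoning --- the same leap occurs in the proof of Lemma \ref{lem:T2} --- and your repair via the closed-operator equality $L^{*}=\overline{M_{0}}$ is sound: approximants $u_{n}=\sum_{x}\xi_{x}^{(n)}v_{x}$ with $u_{n}\rightarrow u$ in $\mathscr{H}_{E}$ and $\xi^{(n)}\rightarrow L^{*}u$ in $l^{2}$ pin the coefficients coordinatewise through $\langle\delta_{x},u\rangle_{\mathscr{H}_{E}}=\eta_{x}$ and kill the constant. Two caveats. First, the equality $M=L^{*}$ is Corollary \ref{cor:MLaj}, which appears \emph{after} the present corollary; its proof (the positivity argument forcing $u=-\Delta u\Rightarrow u=0$) does not use the present statement, so there is no circularity, but you should invoke that argument rather than the claim that hypothesis (\ref{eq:F2-5}) ``holds because $\Delta$ is essentially selfadjoint in $l^{2}(V)$'' --- that concerns a different operator in a different space and is not how the paper verifies $M=L^{*}$. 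Second, your closing identification of $u$ with the norm-convergent series $\sum_{x}\eta_{x}v_{x}$ is asserted rather than proved (the synthesis map $\eta\mapsto\sum_{x}\eta_{x}v_{x}$ is not continuous from $l^{2}$ to $\mathscr{H}_{E}$ in general); the paper is equally informal on this point, and the coordinatewise identification $\eta_{x}=\langle\delta_{x},u\rangle_{\mathscr{H}_{E}}=(\Delta u)(x)$ is the substantive content.
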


\begin{proof}[Proof of Theorem \ref{thm:Deltaf}, continued]

Step 2. We have $M\subset L^{*}$ so $dom(L^{*})$ is dense. But we
need $M=L^{*}$ where $M$ is the closure. Turn to ortho-complement:

Fix $\begin{pmatrix}w\\
L^{*}w
\end{pmatrix}\in\mathscr{G}\left(L^{*}\right)=$ graph of $L^{*}$ s.t. 
\begin{align*}
\left\langle \begin{pmatrix}\mathscr{H}_{E}\ni w:=\sum\nolimits _{x}\xi_{x}v_{x}\\
l^{2}\ni\zeta:=L^{*}\left(\sum\nolimits _{x}\xi_{x}v_{x}\right)
\end{pmatrix},\begin{pmatrix}\mathscr{D}_{E}\ni u:=\sum\nolimits _{x}\eta_{x}v_{x}\\
\eta:=M\left(\sum\nolimits _{x}\eta_{x}v_{x}\right)
\end{pmatrix}\right\rangle  & =0,\;\forall\begin{pmatrix}u\\
Mu
\end{pmatrix}\in\mathscr{G}\left(M\right)\\
 & \Updownarrow\\
\left\langle w,u\right\rangle _{\mathscr{H}_{E}}+\sum\nolimits _{x}\zeta_{x}\eta_{x}=0,\;\forall u=\sum\nolimits _{x}\eta_{x}v_{x} & \in\mathscr{D}_{E}\\
 & \Updownarrow\\
\sum\nolimits _{x}\left(w\left(x\right)-w\left(o\right)\right)\eta_{x}+\sum\nolimits _{x}\zeta_{x}\eta_{x}=0,\;\forall u=\sum\nolimits _{x}\eta_{x}v_{x} & \in\mathscr{D}_{E}\\
 & \Updownarrow\\
\zeta_{x}=w\left(o\right)-w\left(x\right) & \in l^{2}\left(V'\right)\\
 & \Downarrow\\
L^{*}w=\xi\;(\mbox{Recall }w=\sum\nolimits _{x}\xi_{x}v_{x}) & \Rightarrow L^{*}=M
\end{align*}
where we identify $M$ with its closure.

The second equality, $M=L^{*}$, is Corollary \ref{cor:MLaj} below.

Details for the rest of the proof are in Lemma \ref{lem:T1}-\ref{lem:T2},
and the corollaries below.\end{proof}
\begin{rem}
For a general symmetric pair of unbounded operators with dense domains,
$L_{0}$ and $M_{0}$ (as above) we have that each will be contained
in the adjoint of the other. So this means that each of the two adjoints
must have dense domain; and so be closable. If the closures are denoted
$L$ and $M$, respectively, then this new pair will still have each
is contained in the adjoint of the other, but in general these inclusions
might be strict. Nonetheless, if one is an equality, then so is the
other; i.e, the conclusion $L^{*}=M$ implies $M^{*}=L$. (See also
Corollary \ref{cor:aj} below.)\end{rem}
\begin{example}
\label{ex:nn}Let $G=\left(V,E,c\right)$, with $V=\mathbb{Z}_{+}\cup\left\{ 0\right\} $;
i.e., a nearest neighbor graph ($x,y\in V$ are connected by an edge
whenever $x$ is a a nearest neighbor of $y$).

Fix $A>1$. Set 
\[
c_{n,n+1}\equiv1,\; c_{n,n+1}^{\left(A\right)}=A^{n};
\]
then $c<c^{\left(A\right)}$ point-wise on $E$. 

If we introduce $\mathscr{H}_{C}$ and $\mathscr{H}_{A}$, where 
\begin{align*}
\mathscr{H}_{C} & =\left\{ u:\sum_{n}\left|u\left(n\right)-u\left(n+1\right)\right|^{2}<\infty\right\} \\
\mathscr{H}_{A} & =\left\{ u:\sum_{n}A^{n}\left|u\left(n\right)-u\left(n+1\right)\right|^{2}<\infty\right\} 
\end{align*}
then 
\[
v_{xy}\left(\cdot\right)=\begin{cases}
v_{xy}\left(x\right) & \mbox{if }n<x\\
v_{xy}\left(y\right) & \mbox{if }n>y\\
v_{xy}\left(x\right)-n & \mbox{if }x<n\leq y
\end{cases}
\]
and 
\[
v_{xy}^{\left(A\right)}\left(n\right)=\begin{cases}
v_{xy}^{\left(A\right)}\left(x\right) & \mbox{if }n<x\\
v_{xy}^{\left(A\right)}\left(y\right) & \mbox{if }n>y\\
v_{xy}^{\left(A\right)}\left(x\right)-\sum_{x<k\leq n}\frac{1}{A^{k}} & \mbox{if }x<n\leq y
\end{cases}
\]
See Fig \ref{fig:dipole} below. 

Moreover, if $j_{A}:\mathscr{H}_{A}\longrightarrow\mathscr{H}_{C}$
is the natural inclusion then 
\[
j_{A}^{*}v_{xy}=v_{xy}^{\left(A\right)}.
\]
\end{example}
\begin{proof}
See Lemma \ref{lem:jdipole}.\end{proof}
\begin{rem}
In $\mathscr{H}_{A}$, we consider the associated Laplacian 
\[
\left(\Delta_{A}f\right)\left(n\right)=A^{n}\left(f\left(n\right)-f\left(n+1\right)\right)+A^{n-1}\left(f\left(n\right)-f\left(n-1\right)\right)
\]
defined initially on the dense domain $\mathscr{D}_{A}=span\{v_{\left(n,n+1\right)}^{\left(A\right)}\}$.
This operator has deficiency indices $\left(1,1\right)$ as one checks
using domination with the geometric series $\sum_{1}^{\infty}A^{-n}$. 
\end{rem}

\begin{rem}[\cite{JT14}]
 In the example, $c_{n,n+1}\equiv1$, the Laplacian $\Delta=\Delta_{c}$
is bounded. 

In the example $c_{n,n+1}^{\left(A\right)}=A^{n}$, $A>1$, the Laplacian
$\Delta_{A}$ is unbounded, and its von Neumann deficiency indices
are $\left(1,1\right)$. Note since $A>1$, so $\left\{ A^{-n}\right\} _{n=1}^{\infty}$
is monotone decreasing, and $\sum_{n\geq k}A^{-n}=\frac{A^{-k+1}}{A-1}$
is convergent.
\end{rem}
\begin{figure}[H]
\begin{tabular}{cc}
\includegraphics[scale=0.5]{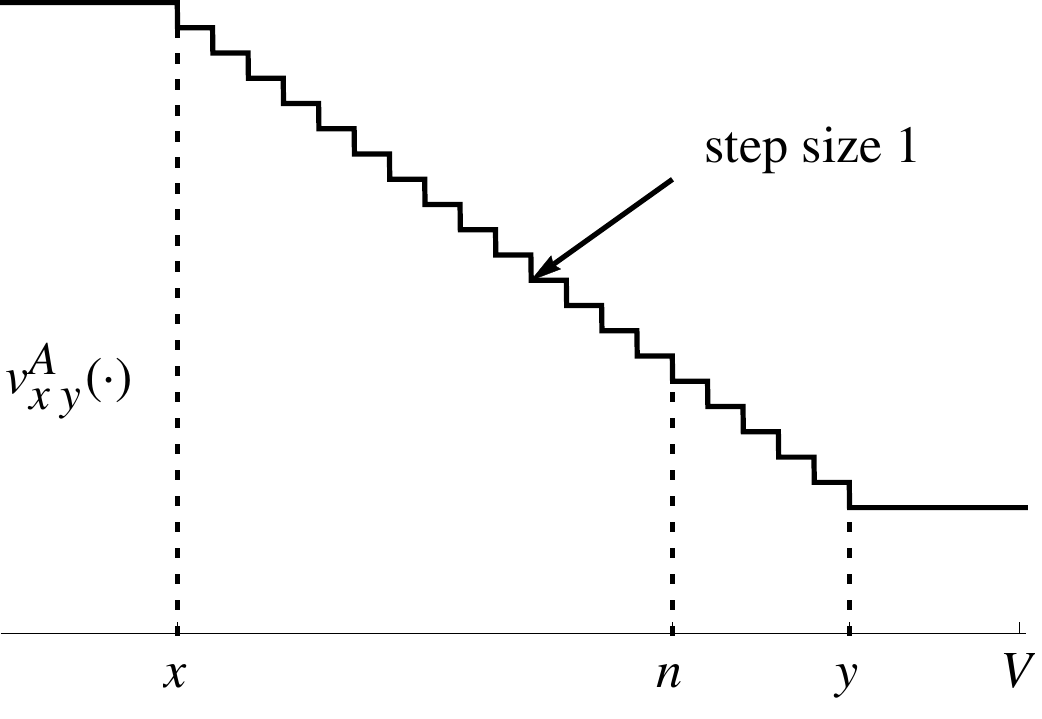} & \includegraphics[scale=0.5]{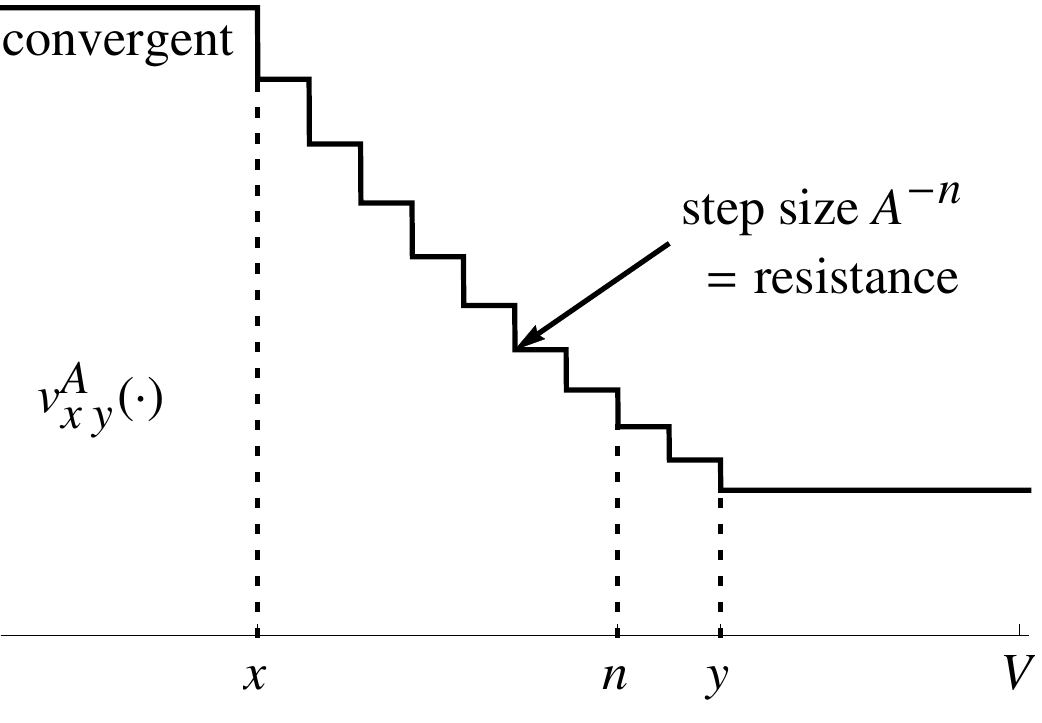}\tabularnewline
unit conductance & conductance $=A^{n}$\tabularnewline
\end{tabular}

\protect\caption{\label{fig:dipole}The dipoles $v_{xy}$ and $v_{xy}^{\left(A\right)}$.}
\end{figure}

\begin{rem}
The metric space $\left(\mathbb{Z}_{+},d_{c}\right)$ is unbounded
when $c\equiv$ unit conductance, while $\left(\mathbb{Z}_{+},d_{A}\right)$
is a bounded metric space when $c_{n,n+1}^{\left(A\right)}=A^{n}$,
$A>1$.
\end{rem}

\subsection{Further Comments}

Fix $\left(V,E,c,o\right)$ be as above. Recall
\begin{itemize}[itemsep=1em]
\item $V$ - vertices, $\#V=\aleph_{0}$, so infinite;
\item $E$ - edges, see section \ref{sub:setting} above;
\item $c$ - conductance, defined on $E$;
\item $o$ - a fixed base-point in $V$;
\item $\mathscr{H}_{E}$ - energy space; 
\item $l^{2}:=l^{2}\left(V'\right)$, $V':=V\backslash\left\{ o\right\} $,
and set $v_{x}:=v_{xo}$, $x\in V'$.
\end{itemize}
Set 
\begin{align}
\mathscr{D}_{E} & =\left\{ \sum\nolimits _{x}\xi_{x}v_{x}\:\big|\:\left(\xi_{x}\right)\:\mbox{finite},\;\sum\nolimits _{x}\xi_{x}=0\right\} \label{eq:fc1}\\
l_{0}^{2}:=\mathscr{D}_{l^{2}}' & =\left\{ \left(\xi_{x}\right)\:\big|\:\left(\xi_{x}\right)\:\mbox{finite support, }\sum\nolimits _{x}\xi_{x}=0\right\} \label{eq:fc2}
\end{align}
$\mathscr{D}_{E}$ is dense in $\mathscr{H}_{E}$, and $l_{0}^{2}$
dense in $l^{2}$. 

Let $L_{0}$ and $M_{0}$ be as follows, see Remark \ref{rem:Fset}:
\[
\xymatrix{l^{2}\ar@/^{1pc}/[r]^{L_{0}} & \mathscr{H}_{E}\ar@/^{1pc}/[l]^{M_{0}}}
\]

Set $dom\left(L_{0}\right):=l_{0}^{2}$, and 
\begin{equation}
L_{0}\xi:=\sum\nolimits _{x}\xi_{x}\delta_{x},\;\forall\xi\in l_{0}^{2};\label{eq:fc3}
\end{equation}

Set $dom\left(M_{0}\right):=\mathscr{D}_{E}$, and 
\begin{equation}
M_{0}u=\xi,\;\forall u:=\sum\nolimits _{x}\xi_{x}v_{x}\in\mathscr{H}_{E}.\label{eq:fc4}
\end{equation}

\begin{lem}
\label{lem:T1}We have 
\[
\left\langle L_{0}\eta,u\right\rangle _{\mathscr{H}_{E}}=\left\langle \eta,M_{0}u\right\rangle _{l^{2}}
\]
for all $\eta\in l_{0}^{2}$, and all $u\in\mathscr{D}_{E}$.\end{lem}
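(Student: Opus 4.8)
The plan is to verify the identity by a direct bilinearity computation that reduces everything to a single pairing, $\langle\delta_x,v_y\rangle_{\mathscr{H}_E}$ for $x,y\in V'$. Since $\eta\in l_0^2$ has finite support and $u=\sum_x\xi_x v_x\in\mathscr{D}_E$ is a finite linear combination, both $L_0\eta=\sum_x\eta_x\delta_x$ and $M_0u=\xi=(\xi_x)$ are genuine finite sums; hence there are no convergence issues and every interchange of summation is legitimate. So the whole lemma is a finite-dimensional algebraic check once the key pairing is known.

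First I would compute $\langle\delta_x,v_y\rangle_{\mathscr{H}_E}$ for $x,y\in V'$. Using the conjugate symmetry of the energy inner product (conjugate-linear in the first slot, as in (\ref{eq:Einn})) together with the dipole reproducing identity (\ref{eq:dipole}) applied with the test vector $\delta_x$, and recalling $v_y=v_{yo}$, one gets
\[
\langle\delta_x,v_y\rangle_{\mathscr{H}_E}=\overline{\langle v_y,\delta_x\rangle_{\mathscr{H}_E}}=\overline{\delta_x(y)-\delta_x(o)}.
\]
Because $\delta_x$ is real-valued and because $x,y\in V'=V\setminus\{o\}$ forces $\delta_x(o)=0$, the right-hand side collapses to the Kronecker delta $\delta_{x,y}$. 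In other words $\{\delta_x\}_{x\in V'}$ and $\{v_y\}_{y\in V'}$ form a biorthogonal (dual) pair in $\mathscr{H}_E$, and this is the structural heart of the argument.

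With this in hand the lemma follows by expanding both sides. On the left,
\[
\langle L_0\eta,u\rangle_{\mathscr{H}_E}=\Big\langle\sum_x\eta_x\delta_x,\sum_y\xi_y v_y\Big\rangle_{\mathscr{H}_E}=\sum_{x,y}\overline{\eta_x}\,\xi_y\,\langle\delta_x,v_y\rangle_{\mathscr{H}_E}=\sum_x\overline{\eta_x}\,\xi_x,
\]
while on the right $\langle\eta,M_0u\rangle_{l^2}=\langle\eta,\xi\rangle_{l^2}=\sum_x\overline{\eta_x}\,\xi_x$, so the two coincide. As a cross-check I could instead invoke Lemma \ref{lem:Delta}(\ref{enu:D5}), namely $\langle\delta_x,u\rangle_{\mathscr{H}_E}=(\Delta u)(x)$, together with $\Delta v_y=\delta_y-\delta_o$ from Lemma \ref{lem:Delta}; since the constraint $\sum_y\xi_y=0$ built into $\mathscr{D}_E$ (see (\ref{eq:fc1})) kills the $\delta_o$ contribution, this again yields $(\Delta u)(x)=\xi_x$ for $x\in V'$, giving the same value.

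There is no real obstacle here: the statement is a bookkeeping identity asserting that $M_0$ is the formal adjoint of $L_0$ on the prescribed domains. The only point demanding care is the base-point accounting, since it is precisely the restriction of all indices to $V'$ (so that $\delta_x(o)$ vanishes) that turns $\langle\delta_x,v_y\rangle_{\mathscr{H}_E}$ into a clean Kronecker delta and makes the two systems dual; had one not removed $o$, an extra $\delta_x(o)$ term would survive. This duality is exactly what will let the pair $(L_0,M_0)$ meet the symmetric-pair hypothesis (\ref{eq:F2-1}) of Lemma \ref{lem:Fri}, through which this lemma feeds into the later closability and factorization results.
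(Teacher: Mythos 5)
Your proof is correct: the biorthogonality $\langle\delta_x,v_y\rangle_{\mathscr{H}_E}=\delta_{x,y}$ for $x,y\in V'$ (equivalently, Lemma \ref{lem:Delta}(\ref{enu:D5}) combined with $\Delta v_y=\delta_y-\delta_o$) reduces both sides to $\sum_x\overline{\eta_x}\xi_x$. The paper states this lemma without proof, but your computation is exactly the one it carries out in the adjacent places (the comments on $dom(M^{*})$ and the proof of Lemma \ref{lem:T2}), so this is essentially the paper's intended argument.
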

\begin{cor}
\label{cor:aj}$M_{0}\subset L_{0}^{*}$, and $L_{0}\subset M_{0}^{*}$.
Identify $M$ with the closure of the operator defined initially only
on $\mathscr{D}_{E}$; and identify $L$ with the closure of the operator
defined initially only on $l_{0}^{2}$; i.e., 
\[
M:=\overline{M_{0}},\quad L=\overline{L_{0}}.
\]
\end{cor}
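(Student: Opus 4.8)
The plan is to read Corollary \ref{cor:aj} off as the concrete instance, for the specific pair $(L_0,M_0)$ defined in \eqref{eq:fc3}--\eqref{eq:fc4}, of the abstract closability result Lemma \ref{lem:Fri}. With the identifications $\mathscr{H}_1=l^2\;(=l^2(V'))$, $\mathscr{H}_2=\mathscr{H}_E$, $\mathscr{D}_1=l_0^2$, $\mathscr{D}_2=\mathscr{D}_E$, the symmetry identity $\left\langle L_0\eta,u\right\rangle_{\mathscr{H}_E}=\left\langle\eta,M_0u\right\rangle_{l^2}$ furnished by Lemma \ref{lem:T1} is exactly hypothesis \eqref{eq:F2-1} of Lemma \ref{lem:Fri}. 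Invoking that lemma yields at once that $L_0$ and $M_0$ are closable and that $M_0\subset L_0^*$, $L_0\subset M_0^*$, which is the assertion. Since the argument is short and isolates precisely what is (and is not) claimed, I would also record it directly.

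For the direct argument, fix $u\in\mathscr{D}_E=dom(M_0)$. By Lemma \ref{lem:T1}, for every $\eta\in l_0^2=dom(L_0)$ one has $\left\langle L_0\eta,u\right\rangle_{\mathscr{H}_E}=\left\langle\eta,M_0u\right\rangle_{l^2}$, and Cauchy--Schwarz bounds the right-hand side by $\left\Vert M_0u\right\Vert_{l^2}\left\Vert\eta\right\Vert_{l^2}$. Thus $\eta\mapsto\left\langle L_0\eta,u\right\rangle_{\mathscr{H}_E}$ is a bounded functional on $dom(L_0)$, so by the definition of the adjoint $u\in dom(L_0^*)$ with $L_0^*u=M_0u$; as $u\in\mathscr{D}_E$ was arbitrary, this is the graph containment $M_0\subset L_0^*$. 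Interchanging the two spaces and using conjugate-symmetry of the inner products, the same identity read as $\left\langle M_0u,\eta\right\rangle_{l^2}=\left\langle u,L_0\eta\right\rangle_{\mathscr{H}_E}$ shows, for fixed $\eta\in l_0^2$, that $u\mapsto\left\langle M_0u,\eta\right\rangle_{l^2}$ is bounded by $\left\Vert L_0\eta\right\Vert_{\mathscr{H}_E}\left\Vert u\right\Vert_{\mathscr{H}_E}$ on $dom(M_0)$, whence $\eta\in dom(M_0^*)$, $M_0^*\eta=L_0\eta$, i.e. $L_0\subset M_0^*$.

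It remains to justify the closures named in the statement. From $M_0\subset L_0^*$ we get $dom(L_0^*)\supseteq\mathscr{D}_E$, which is dense in $\mathscr{H}_E$; and from $L_0\subset M_0^*$ we get $dom(M_0^*)\supseteq l_0^2$, dense in $l^2$. By the closability criterion recorded right after Lemma \ref{lem:graph} (namely, $T$ is closable if and only if $dom(T^*)$ is dense), both $L_0$ and $M_0$ are closable, so the closed densely defined operators $L:=\overline{L_0}$ and $M:=\overline{M_0}$ are well defined, completing the proof. There is no substantial obstacle here: the real computational work was already carried out in Lemma \ref{lem:T1} (via $\left\langle\delta_x,u\right\rangle_{\mathscr{H}_E}=(\Delta u)(x)$ of Lemma \ref{lem:Delta}), and all that this corollary adds is formal. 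The only points requiring care are the conjugate-linearity bookkeeping needed to extract both adjoint inclusions from the single identity, and the deliberate restraint of claiming only inclusions rather than equalities—equality $M=L^*$ is postponed to Corollary \ref{cor:MLaj} and underlies the factorization $\Delta_{Kre}=LL^*$ of Theorem \ref{thm:Deltaf}.
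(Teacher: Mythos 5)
Your proposal is correct and follows essentially the paper's own route: the corollary is an immediate instance of Lemma \ref{lem:Fri} (equivalently, of the adjoint/closability machinery of Lemma \ref{lem:graph} and its corollary) applied to the symmetric pair furnished by Lemma \ref{lem:T1}, which is exactly how the paper intends it to be read. Your direct Cauchy--Schwarz argument and your remark that only inclusions are claimed here, with $M=L^{*}$ deferred to Corollary \ref{cor:MLaj}, are both accurate.
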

\begin{lem}
\label{lem:T2}$L_{0}^{*}=L^{*}$ is as follows: A vector $u\in\mathscr{H}_{E}$
is in $dom(L^{*})\Longleftrightarrow\Delta u\in l^{2}\left(V'\right)$;
see Definition \ref{def:lap}. \end{lem}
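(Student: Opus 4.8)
The plan is to unwind the definition of the adjoint and rewrite the pairing $\langle L_0\xi,u\rangle_{\mathscr{H}_E}$ in terms of the graph Laplacian, so that membership in $dom(L^*)$ becomes an $l^2$-boundedness condition on $\Delta u$. First I would record that $L^*=L_0^*$, since the closure $L=\overline{L_0}$ satisfies $(\overline{L_0})^*=L_0^*$; thus it suffices to analyze $L_0^*$. Unwinding the definition of the adjoint (cf.\ Lemma~\ref{lem:T1} and Corollary~\ref{cor:aj}), a vector $u\in\mathscr{H}_E$ lies in $dom(L^*)$ precisely when the functional $\xi\mapsto\langle L_0\xi,u\rangle_{\mathscr{H}_E}$ is bounded on $l_0^2$ in the $l^2(V')$-norm.

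Next I would compute this functional explicitly. Inserting $L_0\xi=\sum_x\xi_x\delta_x$ from \eqref{eq:fc3} and using Lemma~\ref{lem:Delta}\,(\ref{enu:D5}), namely $\langle\delta_x,u\rangle_{\mathscr{H}_E}=(\Delta u)(x)$, gives
\begin{equation*}
\langle L_0\xi,u\rangle_{\mathscr{H}_E}=\sum\nolimits_{x\in V'}\overline{\xi_x}\,(\Delta u)(x),\qquad\xi\in l_0^2 .
\end{equation*}
The direction $(\Leftarrow)$ is then immediate: if $\Delta u\in l^2(V')$, Cauchy--Schwarz yields $|\langle L_0\xi,u\rangle_{\mathscr{H}_E}|\le\|\Delta u\|_{l^2(V')}\,\|\xi\|_{l^2}$, so $u\in dom(L^*)$.

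For the converse $(\Rightarrow)$ I would invoke the characterization of $dom(L^*)$ already established above, which identifies $dom(L^*)$ with the set of $u=\sum_{x\in V'}\eta_x v_x\in\mathscr{H}_E$ whose coefficient sequence $(\eta_x)$ lies in $l^2(V')$. For such a $u$ and any fixed $y\in V'$, continuity of the inner product lets me interchange sum and pairing, and Lemma~\ref{lem:Delta} (using $\langle\delta_y,u\rangle_{\mathscr{H}_E}=(\Delta u)(y)$ together with $\Delta v_x=\delta_x-\delta_o$) gives
\begin{equation*}
(\Delta u)(y)=\Big\langle\delta_y,\sum\nolimits_{x\in V'}\eta_x v_x\Big\rangle_{\mathscr{H}_E}=\sum\nolimits_{x\in V'}\eta_x\big(\delta_x(y)-\delta_o(y)\big)=\eta_y,
\end{equation*}
since $y\neq o$. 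Hence $\Delta u=(\eta_x)\in l^2(V')$ on $V'$, completing the equivalence.

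The step I expect to be the real obstacle is exactly this converse, because the defining constraint $\sum_x\xi_x=0$ on $l_0^2$ means the functional above only controls $\Delta u$ \emph{modulo additive constants}: boundedness on $l_0^2$ alone would give $\Delta u\in l^2(V')+\mathbb{C}\cdot\mathbf 1$ on $V'$, and since $V'$ is infinite a nonzero constant is not square summable. What rules out the spurious constant is precisely the finer description $u=\sum\eta_x v_x$ with $(\eta_x)\in l^2(V')$, which pins the value $(\Delta u)(y)$ to $\eta_y$ on the nose; so the crux is to have that characterization of $dom(L^*)$ in hand before translating it into Laplacian language. One should also verify the interchange of the (possibly infinite) sum with $\langle\delta_y,\cdot\rangle$, which is justified by the $\mathscr{H}_E$-convergence of $\sum\eta_x v_x$ and the boundedness of the functional $\langle\delta_y,\cdot\rangle$.
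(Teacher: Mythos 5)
Your proposal is correct, and its core computation coincides with the paper's: unwind the definition of the adjoint, use Lemma \ref{lem:Delta} (\ref{enu:D5}) to write $\langle L_{0}\xi,u\rangle_{\mathscr{H}_{E}}=\sum_{x}\overline{\xi_{x}}\,(\Delta u)(x)$, and obtain the implication $\Delta u\in l^{2}(V')\Rightarrow u\in dom(L^{*})$ by Cauchy--Schwarz. Where you genuinely add something is in the converse. The paper's own proof simply asserts that boundedness of the functional on $l_{0}^{2}$ forces $(\Delta u)(\cdot)\in l^{2}$; it does not address the point you isolate, namely that $dom(L_{0})$ consists of finitely supported sequences with $\sum_{x}\xi_{x}=0$, so one may only test against differences $\delta_{x}-\delta_{y}$, which determines $\Delta u$ on $V'$ only modulo an additive constant, i.e.\ $\Delta u\in l^{2}(V')+\mathbb{C}\cdot\mathbf{1}$. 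Your way of excluding a nonzero constant --- invoking the (unnumbered) corollary preceding Lemma \ref{lem:T2} which identifies $dom(L^{*})$ with the set of $u=\sum_{x}\eta_{x}v_{x}$ having $(\eta_{x})\in l^{2}(V')$, and then computing $(\Delta u)(y)=\eta_{y}$ for $y\in V'$ from $\Delta v_{x}=\delta_{x}-\delta_{o}$ --- is sound; the interchange of the sum with $\langle\delta_{y},\cdot\rangle$ is justified exactly as you say, since $\delta_{y}\in\mathscr{H}_{E}$ gives a bounded functional and the series converges in $\mathscr{H}_{E}$. The one caveat is that the corollary you lean on is itself only sketched in the paper (its ``comments on $dom(L^{*})$'' carry the same modulo-constants ambiguity), so a fully self-contained argument would still need to certify that coefficient representation; but granting it, your version is a complete proof where the paper's is elliptical.
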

\begin{proof}
By definition of the adjoint $L^{*}$, $u\in dom(L^{*})\overset{D}{\Longleftrightarrow}\exists C<\infty$
s.t. 
\begin{equation}
\left|\left\langle L\eta,u\right\rangle _{\mathscr{H}_{E}}\right|\leq C\left\Vert \eta\right\Vert _{l^{2}},\;\forall\eta\in l_{0}^{2};\label{eq:fc8}
\end{equation}
see the diagram below. 
\[
\xymatrix{l^{2}\ar@/_{1pc}/[r]_{L} & \mathscr{H}_{E}\ar@/_{1pc}/[l]_{L^{*}}}
\]
Compute (\ref{eq:fc8}) as follows: Let $\eta\in l_{0}^{2}$, then
\begin{eqnarray*}
\left(\mbox{LHS}\right)_{\left(\ref{eq:fc8}\right)} & = & \left\langle L\eta,u\right\rangle _{\mathscr{H}_{E}}\\
 & \underset{\left(\text{by }\left(\ref{eq:fc3}\right)\right)}{=} & \sum\nolimits _{x}\eta_{x}\left\langle \delta_{x},u\right\rangle _{\mathscr{H}_{E}}\\
 & = & \sum\nolimits _{x}\eta_{x}\left(\Delta u\right)\left(x\right),
\end{eqnarray*}
where the last equality follows from Lemma \ref{lem:Delta} (\ref{enu:D5}).
So if (\ref{eq:fc8}) holds for some $C<\infty$, then $\left(\Delta u\right)\left(\cdot\right)\in l^{2}$,
and $\left(L^{*}u\right)\left(x\right)=\left(\Delta u\right)\left(x\right)$,
so 
\begin{equation}
\left\langle L\eta,u\right\rangle _{\mathscr{H}_{E}}=\sum\nolimits _{x}\eta_{x}\left(\Delta u\right)\left(\cdot\right)=\left\langle \eta,L^{*}u\right\rangle _{l^{2}},\label{eq:fc9}
\end{equation}
i.e., $L^{*}u=\left(\Delta u\right)\left(\cdot\right)$.\end{proof}
\begin{cor}
\label{cor:MLaj}
\begin{equation}
M=L^{*}.\label{eq:fc10}
\end{equation}
\end{cor}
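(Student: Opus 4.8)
The plan is to establish the two inclusions $M\subseteq L^{*}$ and $L^{*}\subseteq M$, the first being routine and the second carrying all the content. For $M\subseteq L^{*}$: Lemma~\ref{lem:T1} gives the symmetry relation $\langle L_{0}\eta,u\rangle_{\mathscr{H}_{E}}=\langle\eta,M_{0}u\rangle_{l^{2}}$, hence $M_{0}\subseteq L_{0}^{*}$ by Corollary~\ref{cor:aj}; since $L_{0}^{*}=(\overline{L_{0}})^{*}=L^{*}$ is closed, passing to the closure yields $M=\overline{M_{0}}\subseteq L^{*}$. Consequently $\mathscr{G}(M)$ and $\mathscr{G}(L^{*})$ are closed subspaces of $\mathscr{H}_{E}\oplus l^{2}(V')$ with $\mathscr{G}(M)\subseteq\mathscr{G}(L^{*})$, so $M=L^{*}$ is equivalent to the triviality of the orthogonal complement of $\mathscr{G}(M)$ inside $\mathscr{G}(L^{*})$; this is the graph-theoretic reformulation supplied by Lemma~\ref{lem:graph}.

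For the reverse inclusion I would run the orthogonality computation. Fix $w\in dom(L^{*})$, put $\zeta:=L^{*}w$, and suppose $(w,\zeta)\perp\mathscr{G}(M)$. Because $\mathscr{D}_{E}$ is a core for $M$, it is enough to test against $u=\sum_{x}\eta_{x}v_{x}\in\mathscr{D}_{E}$ (finitely supported, $\sum_{x}\eta_{x}=0$), where $M_{0}u=(\eta_{x})$. Reducing to real-valued functions and using the reproducing identity $\langle v_{x},w\rangle_{\mathscr{H}_{E}}=w(x)-w(o)$ from \eqref{eq:dipole}, the orthogonality relation $\langle w,u\rangle_{\mathscr{H}_{E}}+\langle\zeta,(\eta_{x})\rangle_{l^{2}}=0$ becomes $\sum_{x}\eta_{x}\bigl(w(x)-w(o)+\zeta_{x}\bigr)=0$ for every finitely supported $(\eta_{x})$ with $\sum_{x}\eta_{x}=0$. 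Choosing $\eta=\delta_{p}-\delta_{q}$ shows that $x\mapsto w(x)-w(o)+\zeta_{x}$ is constant on $V'$, say equal to $\kappa$.

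To extract $w=0$ I would bring in the two structural facts already available. First, by the characterization of $dom(L^{*})$ obtained after Lemma~\ref{lem:T2}, write $w=\sum_{x}\xi_{x}v_{x}$ with $(\xi_{x})\in l^{2}(V')$, so that $\zeta=L^{*}w=(\xi_{x})$. Second, $w(x)-w(o)=\langle v_{x},w\rangle_{\mathscr{H}_{E}}=\sum_{y}\xi_{y}G(x,y)$ with the Gramian $G(x,y)=\langle v_{x},v_{y}\rangle_{\mathscr{H}_{E}}$ of \eqref{eq:E3}. Substituting both into the constancy relation turns it into $\bigl((I+G)\xi\bigr)_{x}=\kappa$ for all $x\in V'$. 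Since $G$ is a Gramian it is positive semidefinite, so $I+G\succeq I$ is injective; thus $\kappa=0$ forces $\xi=0$, i.e.\ $w=0$ and $(w,\zeta)=0$, which is exactly the triviality needed to conclude $M=L^{*}$.

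The main obstacle is therefore excluding a nonzero boundary constant $\kappa$. Pairing $(I+G)\xi=\kappa\cdot\mathbf{1}$ (constant sequence) against $\xi$ only gives $\|\xi\|_{l^{2}}^{2}+\|w\|_{\mathscr{H}_{E}}^{2}=\kappa\sum_{x}\xi_{x}$, which is not contradictory on its own --- and indeed $\sum_{x}\xi_{x}$ need not even converge, which is the heart of the difficulty. A nonzero $\kappa$ would correspond to an $l^{2}$-solution of $(I+G)\xi=\kappa\cdot\mathbf{1}$, i.e.\ to a defect (harmonic-type) vector; note that any $h$ with $\Delta h=0$ lies in $dom(L^{*})$ with $L^{*}h=0$, so the asserted equality $M=L^{*}$ secretly encodes that every such vector is already captured in the graph-norm closure of $\mathscr{D}_{E}$. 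Ruling this out is where one must use the $l^{2}$-membership of $L^{*}w$ quantitatively together with connectedness and the positivity of $G$, and it is the step I expect to require the most care.
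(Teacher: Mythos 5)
Your proposal tracks the paper's argument faithfully up to the point where all the content lies, and then stops short. Exactly as the paper does, you reduce the problem to showing that a vector $\left(w,L^{*}w\right)$ orthogonal to $\mathscr{G}\left(M\right)$ inside $\mathscr{G}\left(L^{*}\right)$ must vanish, and you correctly derive that such a $w$ satisfies $w\left(x\right)-w\left(o\right)+\left(L^{*}w\right)\left(x\right)=\kappa$ for all $x\in V'$; but you do not eliminate $\kappa$, and you say so explicitly. This is a genuine gap, not a routine detail. Your chosen route --- rewriting the relation as $(I+G)\xi=\kappa\mathbf{1}$ in $l^{2}\left(V'\right)$ and hoping to exploit injectivity of $I+G$ --- cannot close it, because, as you yourself observe, $\mathbf{1}\notin l^{2}\left(V'\right)$ and $\sum_{x}\xi_{x}$ need not converge, so pairing against $\xi$ in $l^{2}$ yields nothing.

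The paper's way out rests on two facts you never invoke. First, vectors of $\mathscr{H}_{E}$ are equivalence classes of functions modulo constants, so the constant $\kappa$ \emph{is} the zero vector of $\mathscr{H}_{E}$: since $L^{*}w=\Delta w$ by Lemma \ref{lem:T2}, the relation $w+\Delta w=\kappa$ on $V'$ already says $w=-\Delta w$ as elements of $\mathscr{H}_{E}$, with no need to prove $\kappa=0$; in particular $w\in\mathscr{H}_{E}\cap l^{2}$. Second, one pairs in the \emph{energy} inner product rather than in $l^{2}$ and uses semiboundedness: $\left\langle w,\Delta w\right\rangle _{\mathscr{H}_{E}}\geq0$ (Lemma \ref{lem:Delta} (\ref{enu:D4})) combined with $\Delta w=-w$ gives $-\left\Vert w\right\Vert _{\mathscr{H}_{E}}^{2}\geq0$, hence $w=0$. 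So the missing idea is precisely the positivity of $\Delta$ in $\mathscr{H}_{E}$ together with the mod-constants identification; once these are used, the divergent sum $\sum_{x}\xi_{x}$ that blocks your computation never appears.
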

\begin{proof}
We have (\ref{eq:fc10}) $M\subset L^{*}$ by Cor. \ref{cor:aj}.
So $\mathscr{G}\left(M\right)\subset\mathscr{G}\left(L^{*}\right)$,
i.e., containment of operator graphs. Now compute $\mathscr{G}\left(L^{*}\right)\ominus\mathscr{G}\left(M\right)$:
(see (\ref{eq:fc3})-(\ref{eq:fc4}))

We have the graph inner product in $\mathscr{H}_{E}\oplus l^{2}$
\begin{equation}
\left\langle \underset{\in\mathscr{G}\left(M\right)}{\underbrace{\begin{pmatrix}\sum\xi_{x}v_{x}\in\mathscr{H}_{E}\\
\downarrow\\
\left(\xi_{x}\right)\in l^{2}
\end{pmatrix}}},\underset{\in\mathscr{G}\left(L^{*}\right)}{\underbrace{\begin{pmatrix}u\in\mathscr{H}_{E}\\
\downarrow\\
\left(\Delta u\right)\left(x\right)\in l^{2}
\end{pmatrix}}}\right\rangle \label{eq:fc12}
\end{equation}

Assume $\begin{pmatrix}u\\
L^{*}u
\end{pmatrix}\in\mathscr{G}\left(M\right)^{\perp}$. Claim $u=0$. 

\uline{Proof of the claim:} It is equivalent to prove
\begin{equation}
\left\langle \sum\nolimits _{x}\xi_{x}v_{x},u\right\rangle _{\mathscr{H}_{E}}+\left\langle \left(\xi_{x}\right),L^{*}u\right\rangle _{l^{2}}=0\label{eq:fc13}
\end{equation}
for all $\left(\xi_{x}\right)$ finite supported, s.t. $\sum_{x}\xi_{x}=0$.
See (\ref{eq:fc4}). Now rewrite (\ref{eq:fc13}) as follows
\begin{equation}
0=\sum\nolimits _{x}\xi_{x}\left\langle v_{x},u\right\rangle _{\mathscr{H}_{E}}+\sum\nolimits _{x}\xi_{x}\left(\Delta u\right)\left(x\right)\label{eq:fc14}
\end{equation}
for all $\left(\xi_{x}\right)$ finite support, $\sum\xi_{x}=0$;
and use 
\[
\left\langle v_{x},u\right\rangle _{\mathscr{H}_{E}}=u\left(x\right)-u\left(o\right)
\]
\[
\sum\nolimits _{x}\xi_{x}\left(u\left(x\right)-u\left(o\right)\right)=\sum\nolimits _{x}\xi_{x}u\left(x\right),\;\mbox{since }\sum\nolimits _{x}\xi_{x}=0;
\]
then (\ref{eq:fc14}) $\Longleftrightarrow$ 
\[
\sum\nolimits _{x}\xi_{x}\left\{ u\left(x\right)+\left(\Delta u\right)\left(x\right)\right\} =0
\]
for all finite support $\left(\xi_{x}\right)$ s.t. $\sum_{x}\xi_{x}=0$.
But then we get 
\[
\mathscr{H}_{E}\ni u=-\Delta u\in l^{2}
\]
and therefore $u\in\mathscr{H}_{E}\cap l^{2}$, and $\left\langle u,\Delta u\right\rangle _{\mathscr{H}_{E}}\geq0$
by Lemma \ref{lem:Delta}. So ``$-\left\Vert u\right\Vert _{\mathscr{H}_{E}}^{2}\geq0$''
$\Longrightarrow$ $u=0$.
\end{proof}

\section{\label{sec:sp}The Spectrum of the Graph Laplacians}

In this section we show the following result (Theorem \ref{thm:LLs}):
Starting with a fixed graph $(V,E)$ and a fixed conductance function
$c$, we arrive at two versions of a selfadjoint graph Laplacian,
one defined naturally in the $l^{2}$ space of $V$, and the other
in the energy Hilbert space defined from $c$. We prove that, as sets,
the two spectra are the same, aside from the point $0$. The point
zero may be in the spectrum of the second, but not the first. In addition
to this theorem, we isolate other spectral similarities. In section
\ref{sec:vc} below we then study how the spectrum changes subject
to variations in choice of conductance function $c$.

Let $\left(V,E,c,o,\Delta\left(=\Delta_{c}\right),\mathscr{H}_{E}\right)$
be as above. As in Definition  \ref{def:D2lp}, we introduce the operator
$L:l^{2}\longrightarrow\mathscr{H}_{E}$ where $l^{2}:=l^{2}\left(V'\right)$,
$V':=V\backslash\left\{ o\right\} $. Recall $L$ is defined initially
as $L_{0}$; and 
\begin{equation}
dom\left(L_{0}\right)=\left\{ \left(\xi_{x}\right)\:\big|\:\mbox{finitely supported},\;\sum\nolimits _{x}\xi_{x}=0\right\} ;\label{eq:ls1}
\end{equation}
and we let 
\begin{equation}
L:=\overline{L_{0}}\label{eq:ls2}
\end{equation}
i.e., the operator arises as the closure of $L_{0}$
\begin{align}
\mathscr{G}\left(L\right) & =\overline{\mathscr{G}\left(L_{0}\right)}\;\left(\mbox{norm closure}\right)\;\mbox{where}\label{eq:ls3}\\
\mathscr{G}\left(L_{0}\right) & =\left\{ \left(\xi,L_{0}\xi\right)\:\big|\:\xi\in dom\left(L_{0}\right)\right\} \subset l^{2}\oplus\mathscr{H}_{E}.\label{eq:ls3-1}
\end{align}
(For the discussion of closed operators, see \cite{DS88b,RS75}.)

We assume throughout the conductance $c$ is fixed, and that $\left(V,E,c\right)$
is \emph{\uline{connected}}. Also, we fix a base-point $o$, and
set $v_{x}:=v_{xo}$ (the $\left(xo\right)$ dipole for $x\in V'$),
and we recall that 
\begin{equation}
L_{0}\left(\xi\right):=\sum\nolimits _{x}\xi_{x}\delta_{x}\in\mathscr{H}_{E},\;\mbox{and}\label{eq:ls4}
\end{equation}
\begin{equation}
L_{0}^{*}\left(\sum\nolimits _{x}\xi_{x}v_{x}\right):=\left(\xi_{x}\right)\in l^{2}\left(V'\right).\label{eq:ls5}
\end{equation}

We shall compute the spectrum of the following two versions of the
graph Laplacian: 
\begin{equation}
\left(\Delta u\right)\left(x\right)=\sum_{y\sim x}c_{xy}\left(u\left(x\right)-u\left(y\right)\right).\label{eq:ls6}
\end{equation}

\begin{enumerate}[label=\bf V\arabic{enumi}.]
\item  $\Delta$ is viewed as a selfadjoint operator in the Hilbert space
$l^{2}\left(V'\right)$
\item Refers to $\Delta$ as the selfadjoint Krein extension $\Delta_{Kr}$
in $\mathscr{H}_{E}$. 
\end{enumerate}
For the operator in version 1, we shall write $\Delta_{l^{2}}$ for
identification; and 
\begin{equation}
\begin{split}dom\left(\Delta_{l^{2}}\right)=\left\{ \xi\in l^{2}\:\big|\:\Delta\xi\in l^{2}\right\}  & \mbox{ where}\\
\left(\Delta\xi\right)\left(x\right):=\sum_{y\sim x}c_{xy}\left(\xi_{x}-\xi_{y}\right)
\end{split}
\label{eq:ls7}
\end{equation}

It is known \cite{Jor08,JoPe11b} that the operator $\Delta_{l^{2}}$
is selfadjoint in $l^{2}$; generally unbounded; i.e., its deficiency
indices are $\left(0,0\right)$ referring to $l^{2}$. 
\begin{thm}
\label{thm:LLs}Let $\Delta_{l^{2}}$ and $\Delta_{Kr}$ be the selfadjoint
graph-Laplacians in $l^{2}\left(V'\right)$ and $\mathscr{H}_{E}$
respectively. Then, 
\begin{equation}
\mbox{spectrum}\left(\Delta_{l^{2}}\right)=\mbox{spectrum}\left(\Delta_{Kr}\right)\backslash\left\{ 0\right\} ;\label{eq:ls8}
\end{equation}
i.e., the spectrum of the two operators agree except for the point
$0$. In particular, one of the two operators is bounded iff the other
is; and then the bounds agree.
\end{thm}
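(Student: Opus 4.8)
The plan is to exploit the factorization $\Delta_{Kr}=LL^{*}$ established in Theorem \ref{thm:Deltaf}, together with the fact (cited just before the statement) that $\Delta_{l^{2}}=L^{*}L$ is the essentially selfadjoint $l^{2}$-Laplacian. Once both operators are realized as $LL^{*}$ and $L^{*}L$ for a single closed operator $L:l^{2}(V')\to\mathscr{H}_{E}$ with dense domain, the theorem becomes an instance of a classical and purely operator-theoretic fact: for any closed densely-defined $L$, the selfadjoint operators $LL^{*}$ and $L^{*}L$ have the same spectrum away from $0$. So the bulk of the work is to reduce to this general lemma and then verify its hypotheses in our setting.

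First I would record the identifications precisely: by Theorem \ref{thm:Deltaf}(\ref{enu:.LL2}) we have $\Delta_{Kr}=LL^{*}$ acting in $\mathscr{H}_{E}$, and by Corollary \ref{cor:MLaj} together with Lemma \ref{lem:T2} the operator $L^{*}=M$ has the property that $L^{*}u=\Delta u$ whenever $\Delta u\in l^{2}$; hence $L^{*}L=\Delta_{l^{2}}$ as selfadjoint operators in $l^{2}(V')$, using the stated essential selfadjointness from \cite{Jor08,JoPe11b}. Both $LL^{*}$ and $L^{*}L$ are selfadjoint by Corollary \ref{cor:Fri} (von Neumann's theorem). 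Next I would invoke the general spectral-mapping statement: if $L$ is closed and densely defined, then for every $\lambda\neq 0$,
\begin{equation*}
\lambda\in\mbox{spectrum}(L^{*}L)\;\Longleftrightarrow\;\lambda\in\mbox{spectrum}(LL^{*}).
\end{equation*}
The cleanest route is through the bounded resolvent or equivalently the polar-decomposition/functional-calculus picture: writing $L=U|L|$ with $|L|=(L^{*}L)^{1/2}$ and $U$ a partial isometry, one checks $LL^{*}=U(L^{*}L)U^{*}$, so $U$ restricts to a unitary from the closure of the range of $|L|$ (i.e. $(\ker L)^{\perp}$) onto $\overline{\mathrm{ran}\,L}=(\ker L^{*})^{\perp}$, intertwining the restrictions of $L^{*}L$ and $LL^{*}$ to those reducing subspaces. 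On the complementary subspaces $\ker L$ and $\ker L^{*}$ both operators act as $0$. Since $\lambda\neq 0$ cannot be detected on the kernel components, the nonzero spectra coincide.

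Then I would translate this back: the point $0$ belongs to $\mbox{spectrum}(\Delta_{l^{2}})$ only through $\ker L=\ker(L^{*}L)$, and to $\mbox{spectrum}(\Delta_{Kr})$ only through $\ker L^{*}=\ker(LL^{*})$, which is exactly the space of harmonic functions in $\mathscr{H}_{E}$ by Lemma \ref{lem:Delta}(\ref{enu:D5}). Because $\Delta_{l^{2}}$ has trivial kernel in the relevant sense (no nonzero $l^2$ harmonic functions on an infinite connected network) while $\mathscr{H}_{E}$ typically contains nonzero harmonic functions, the point $0$ may lie in the second spectrum but not the first; discarding $\{0\}$ yields the asserted equality \eqref{eq:ls8}. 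The final sentence about boundedness follows since $\mbox{spectrum}(\Delta_{l^2})$ and $\mbox{spectrum}(\Delta_{Kr})$ are bounded subsets of $[0,\infty)$ simultaneously (they differ only by the bounded point $0$), and their suprema agree. I expect the main obstacle to be the careful bookkeeping with $0$: one must argue that $0$ is never an isolated accidental mismatch of essential spectrum, i.e. that the only discrepancy at $0$ comes from the differing kernels (harmonic functions) and not from differing approximate-spectrum behavior near $0$. Handling this requires verifying that the partial isometry $U$ above genuinely intertwines the full spectral measures on the non-kernel parts, so that any nonzero approximate eigenvalue for one operator produces one for the other; this is where the closedness of $L$ and the precise domain description from Lemma \ref{lem:T2} are essential.
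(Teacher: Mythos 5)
Your proposal matches the paper's own argument: both rest on the identifications $\Delta_{Kr}=LL^{*}$ (Theorem \ref{thm:Deltaf}) and $\Delta_{l^{2}}=L^{*}L$ (Lemma \ref{lem:.Deltal2}), followed by the polar decomposition $L=U\left(L^{*}L\right)^{1/2}=\left(LL^{*}\right)^{1/2}U$, with $U$ unitary off the kernels and $\ker\left(\Delta_{l^{2}}\right)=0$ by connectedness, so that the nonzero spectra coincide and any discrepancy at $0$ comes only from the harmonic functions in $\mathscr{H}_{E}$. This is essentially the same proof, with your version spelling out the intertwining and the bookkeeping at $0$ somewhat more explicitly than the paper does.
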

We shall need the following lemma.
\begin{lem}
\label{lem:.Deltal2}$\Delta_{l^{2}}=L^{*}L$.\end{lem}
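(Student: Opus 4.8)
The plan is to identify $\Delta_{l^2}$ with the operator $L^*L$ by using the explicit description of $L^*$ already obtained in Lemma \ref{lem:T2}, namely that $u\in dom(L^*)$ iff $\Delta u\in l^2(V')$, in which case $L^*u = \Delta u$. So I must show two things: first, that $L^*L$ acts as $\Delta$ on a suitable domain, and second, that its domain coincides exactly with $dom(\Delta_{l^2}) = \{\xi\in l^2 \mid \Delta\xi\in l^2\}$.

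First I would recall the two operators in the symmetric pair: $L$ is the closure of $L_0\xi = \sum_x \xi_x \delta_x$ (mapping $l^2 \to \mathscr{H}_E$), and by Lemma \ref{lem:T2} its adjoint $L^*$ sends $u = \sum_x\eta_x v_x \in \mathscr{H}_E$ to $\Delta u \in l^2(V')$ whenever $\Delta u$ lies in $l^2$. The key computational engine is Lemma \ref{lem:Delta}(\ref{enu:D5}), which gives $\langle \delta_x, u\rangle_{\mathscr{H}_E} = (\Delta u)(x)$; this is what makes $L$ and $\Delta$ interact so cleanly. I would then compute $L^*L\xi$ for $\xi \in dom(L_0)$: since $L\xi = \sum_x \xi_x\delta_x$, applying $L^*$ and using (\ref{enu:D5}) together with the characterization of $L^*$ yields $(L^*L\xi)(x) = (\Delta L\xi)$ evaluated appropriately, and one checks this reduces to the $l^2$-Laplacian formula $\sum_{y\sim x}c_{xy}(\xi_x - \xi_y)$. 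By von Neumann's theorem (Corollary \ref{cor:Fri}), $L^*L$ is automatically selfadjoint, so it suffices to match it with the known selfadjoint $\Delta_{l^2}$.

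The main obstacle I anticipate is the exact matching of domains rather than the formal agreement of the operators. The subtlety is that $dom(L_0)$ consists of finitely supported sequences with $\sum_x \xi_x = 0$, whereas $dom(\Delta_{l^2})$ is the full maximal domain $\{\xi \in l^2 \mid \Delta\xi \in l^2\}$; I must verify that the closure process for $L^*L$ enlarges the domain to precisely this maximal set and does not, e.g., impose a spurious summation constraint. Here I would lean on the fact (cited earlier from \cite{Jor08,JoPe11b}) that $\Delta$ on finitely supported functions is \emph{essentially selfadjoint} in $l^2$ with deficiency indices $(0,0)$: this means $\Delta_{l^2}$ is the unique selfadjoint extension of $\Delta|_{\text{fin}}$, so any selfadjoint operator that agrees with $\Delta$ on a core must equal $\Delta_{l^2}$.

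Concretely, I would argue as follows. Since $L^*L \supseteq \Delta$ on $dom(L_0)$ and $L^*L$ is selfadjoint (hence closed and symmetric), $L^*L$ is a selfadjoint extension of the symmetric operator $\Delta|_{dom(L_0)}$. Because the deficiency indices of $\Delta|_{\text{fin}}$ in $l^2$ are $(0,0)$ — and the constraint $\sum_x\xi_x=0$ removing the base point $o$ does not change essential selfadjointness — the symmetric operator has a \emph{unique} selfadjoint extension, which is $\Delta_{l^2}$. Therefore $L^*L = \Delta_{l^2}$, giving the claim. I would flag that the one place demanding genuine care is confirming that restricting to the zero-sum, $o$-excised domain $l^2(V')$ preserves essential selfadjointness; this should follow since deleting a single vertex and passing to the codimension-one subspace is a finite-rank perturbation of the domain that cannot create nonzero deficiency indices.
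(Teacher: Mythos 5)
Your argument follows essentially the same path as the paper's. The paper verifies the quadratic-form identity $\left\langle \xi,\Delta\xi\right\rangle _{l^{2}}=\left\Vert L\xi\right\Vert _{\mathscr{H}_{E}}^{2}=\left\langle \xi,L^{*}L\xi\right\rangle _{l^{2}}$ on $dom(L_{0})$ by expanding $\Vert\sum_{x}\xi_{x}\delta_{x}\Vert_{\mathscr{H}_{E}}^{2}$ with the Gram matrix $\left\langle \delta_{x},\delta_{y}\right\rangle _{\mathscr{H}_{E}}$ of Lemma \ref{lem:Delta} (\ref{enu:D8}); by polarization this is the same information as your direct verification that $L^{*}L\xi=\Delta\xi$ on $dom(L_{0})$ via Lemma \ref{lem:T2}. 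Both proofs then rest on the same final step: that agreement on $dom(L_{0})$ forces the two selfadjoint operators to coincide.

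The one point I must push back on is your closing justification. The claim that passing to the codimension-one zero-sum subspace is a ``finite-rank perturbation of the domain that cannot create nonzero deficiency indices'' is not a valid general principle: restricting an essentially selfadjoint operator to a codimension-one subspace of its core can yield a closed symmetric operator with deficiency indices $(1,1)$, since the graph closure may lose one dimension. A concrete counterexample with exactly the structure at hand: for the number operator $Ne_{n}=ne_{n}$ on $l^{2}(\mathbb{N})$ with core the finitely supported sequences, the zero-sum hyperplane is \emph{not} a core, because the functional $\xi\mapsto\sum_{n}\xi_{n}=\langle(1/n)_{n},N\xi\rangle$ is graph-norm continuous (as $(1/n)_{n}\in l^{2}$), so the hyperplane is graph-closed and proper. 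Hence the constraint $\sum_{x}\xi_{x}=0$ in $dom(L_{0})$ genuinely requires an argument specific to $\Delta_{l^{2}}$ showing that the zero-sum finitely supported sequences remain a core (equivalently, that the functional $\xi\mapsto\sum_{x}\xi_{x}$ is not continuous for the graph norm of $\Delta_{l^{2}}$). To be fair, the paper's own proof passes over exactly this issue when it asserts that the identity on $dom(L_{0})$ ``suffices''; your write-up is at the paper's level of rigor everywhere else, but it replaces that silent gap with an explicit justification that is false as stated.
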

\begin{proof}
Using (\ref{eq:ls4}), we note that it suffices to prove that for
all $\xi\in dom(L_{0})$ (eq. (\ref{eq:ls1})) we have 
\begin{equation}
\left\langle \xi,\Delta\xi\right\rangle _{l^{2}}=\left\Vert L\xi\right\Vert _{\mathscr{H}_{E}}^{2}=\left\langle \xi,L^{*}L\xi\right\rangle _{l^{2}}.\label{eq:ls10}
\end{equation}
But a direct computation yields (\ref{eq:ls10}); indeed
\begin{align*}
\left(\mbox{LHS}\right)_{\left(\ref{eq:ls10}\right)} & =\sum_{x}\xi_{x}\sum_{y\sim x}c_{xy}\left(\xi_{x}-\xi_{y}\right)\\
 & =\sum_{x}\xi_{x}^{2}\, c\left(x\right)-\underset{\left(xy\right)\in E_{dir}}{\sum\sum}c_{xy}\xi_{x}\xi_{y},
\end{align*}
where $c\left(x\right)=\sum_{y\sim x}c_{xy}$.

Now using 
\begin{equation}
\left\langle \delta_{x},\delta_{y}\right\rangle _{\mathscr{H}_{E}}=\begin{cases}
c\left(x\right) & \mbox{if }x=y\\
-c_{xy} & \mbox{if }\left(xy\right)\in E\\
0 & \mbox{otherwise}
\end{cases};\label{eq:ls11}
\end{equation}
we get 
\begin{eqnarray*}
\left(\mbox{LHS}\right)_{\left(\ref{eq:ls10}\right)} & \underset{\left(\text{by }\left(\ref{eq:ls11}\right)\right)}{=} & \sum\nolimits _{x}\sum\nolimits _{y}\xi_{x}\xi_{y}\left\langle \delta_{x},\delta_{y}\right\rangle _{\mathscr{H}_{E}}\\
 & = & \left\Vert \sum\nolimits _{x}\xi_{x}\delta_{x}\right\Vert _{\mathscr{H}_{E}}^{2}\\
 & \underset{\left(\text{by }\left(\ref{eq:ls4}\right)\right)}{=} & \left\Vert L\xi\right\Vert _{\mathscr{H}_{E}}^{2}=\left(\mbox{RHS}\right)_{\left(\ref{eq:ls10}\right)}
\end{eqnarray*}
which is the desired conclusion.
\end{proof}

\begin{proof}[Proof of Theorem \ref{thm:LLs} ]
From Theorem \ref{thm:Deltaf}, we have $\Delta_{Kr}=LL^{*}$. Using
the polar decomposition \cite{DS88b,RS75} (polar factorization) for
the closed operator $L:l^{2}\rightarrow\mathscr{H}_{E}$, we get a
unique isometry $U:l^{2}\rightarrow\mathscr{H}_{E}$ such that 
\begin{equation}
L=U\left(L^{*}L\right)^{\frac{1}{2}}=\left(LL^{*}\right)^{\frac{1}{2}}U;\label{eq:ls12}
\end{equation}
and therefore by Theorem \ref{thm:Deltaf} and Lemma \ref{lem:.Deltal2},
\begin{equation}
L=U\left(\Delta_{l^{2}}\right)^{\frac{1}{2}}=\left(\Delta_{Kr}\right)^{\frac{1}{2}}U.\label{eq:ls13}
\end{equation}
Note that the two operators in (\ref{eq:ls13}) under the square root
are selfadjoint (and semibounded). 

$\Delta_{l^{2}}\geq0$ is selfadjoint in $l^{2}$; and $\Delta_{Kr}\geq0$
is selfadjoint in $\mathscr{H}_{E}$. Hence the conclusion in Theorem
\ref{thm:LLs} is a consequence of the polar decomposition in (\ref{eq:ls12}),
i.e., that $U$ is isometric in the ortho-complement of the kernel
of $\Delta_{l^{2}}$. The final space $UU^{*}$ is $\mathscr{H}_{E}\ominus\mbox{Ker}\left(\Delta_{F}\right)$.\end{proof}
\begin{cor}
\label{cor:Kr}The contractive selfadjoint operator $B_{Kr}:\mathscr{H}_{E}\rightarrow\mathscr{H}_{E}$
from Definition \ref{def:kr} giving the Krein extension $\Delta_{Kr}$
is as follows:
\begin{equation}
B_{Kr}=U\left(\Delta_{l^{2}}+I_{l^{2}}\right)^{-1}U^{*}\label{eq:Bkr}
\end{equation}
where $U$ is the isometry in (\ref{eq:ls12}).\end{cor}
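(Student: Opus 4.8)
The plan is to read Definition \ref{def:kr} as the assertion that $B_{Kr}$ is the resolvent $(\Delta_{Kr}+I_{\mathscr{H}_{E}})^{-1}$ evaluated at the point $-1$, and then to transport this resolvent through the polar decomposition (\ref{eq:ls12}) to the $l^{2}$ side. Since $\Delta_{Kr}=LL^{*}\geq 0$ by Theorem \ref{thm:Deltaf}, the operator $\Delta_{Kr}+I_{\mathscr{H}_{E}}$ is bounded below by $I_{\mathscr{H}_{E}}$, hence boundedly invertible, and $B:=(\Delta_{Kr}+I_{\mathscr{H}_{E}})^{-1}$ is a selfadjoint contraction. First I would check that this $B$ meets the defining relation $B(\varphi+\Delta\varphi)=\varphi$ for $\varphi\in\mathscr{D}_{E}$ (immediate from $\Delta_{Kr}\supset\Delta|_{\mathscr{D}_{E}}$) and that $B^{-1}-I_{\mathscr{H}_{E}}=\Delta_{Kr}$, so that $B=B_{Kr}$. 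Invoking $L^{*}L=\Delta_{l^{2}}$ (Lemma \ref{lem:.Deltal2}), this reduces the corollary to the resolvent identity $(LL^{*}+I_{\mathscr{H}_{E}})^{-1}=U(L^{*}L+I_{l^{2}})^{-1}U^{*}$.

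The core step is the intertwining. From the two factorizations $L=U(L^{*}L)^{1/2}=(LL^{*})^{1/2}U$ in (\ref{eq:ls12}), together with $U^{*}U=I_{l^{2}}$, one deduces $LL^{*}=U(L^{*}L)U^{*}$; equivalently, $U$ restricts to a unitary from $l^{2}$ onto $\operatorname{ran}(U)=(\ker LL^{*})^{\perp}$ conjugating $L^{*}L=\Delta_{l^{2}}$ into the part of $LL^{*}=\Delta_{Kr}$ acting on $(\ker\Delta_{Kr})^{\perp}$. The bounded functional calculus applied to $f(\lambda)=(1+\lambda)^{-1}$ then gives $f(\Delta_{Kr})=Uf(\Delta_{l^{2}})U^{*}$ on $\operatorname{ran}(U)$. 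In practice I would verify this algebraically by computing $(LL^{*}+I_{\mathscr{H}_{E}})\,U(L^{*}L+I_{l^{2}})^{-1}U^{*}$ and using $LL^{*}U=U(L^{*}L)$ to collapse it to $U(L^{*}L+I_{l^{2}})(L^{*}L+I_{l^{2}})^{-1}U^{*}=UU^{*}$.

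The main obstacle is precisely the phenomenon isolated in Theorem \ref{thm:LLs} — that $0$ may lie in $\operatorname{spectrum}(\Delta_{Kr})$ but never in $\operatorname{spectrum}(\Delta_{l^{2}})$ — namely the behaviour on $\ker\Delta_{Kr}=(\operatorname{ran}U)^{\perp}$, the space of finite-energy harmonic functions. The computation above yields $(LL^{*}+I_{\mathscr{H}_{E}})\,U(L^{*}L+I_{l^{2}})^{-1}U^{*}=UU^{*}$, so the right-hand side of (\ref{eq:Bkr}) inverts $\Delta_{Kr}+I_{\mathscr{H}_{E}}$ only on $\operatorname{ran}(U)$, whereas on the harmonic complement $\Delta_{Kr}$ acts as $0$ and its resolvent at $-1$ is the identity. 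Hence the honest identity reads $(\Delta_{Kr}+I_{\mathscr{H}_{E}})^{-1}=U(\Delta_{l^{2}}+I_{l^{2}})^{-1}U^{*}+(I_{\mathscr{H}_{E}}-UU^{*})$, and I expect the delicate part of the write-up to be arguing that $U$ is genuinely an isometry ($\ker L=0$, equivalently $\Delta_{l^{2}}$ has trivial kernel) and correctly accounting for the projection $I_{\mathscr{H}_{E}}-UU^{*}$ onto the harmonic subspace. The stated formula (\ref{eq:Bkr}) is then exactly correct in the recurrent case, where there are no nonconstant finite-energy harmonic functions, so that $U$ is unitary and $UU^{*}=I_{\mathscr{H}_{E}}$; otherwise it should carry the extra term $I_{\mathscr{H}_{E}}-UU^{*}$.
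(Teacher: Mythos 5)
Your argument is sound and follows the route the paper itself gestures at -- the paper's entire proof is the single sentence that the corollary ``follows from the unitary equivalence assertion in Theorem \ref{thm:LLs}'' -- but by actually carrying out the computation you have exposed a genuine defect in the statement as printed, not in your proof. The intertwining $LL^{*}U=UL^{*}L$ extracted from (\ref{eq:ls12}), together with $U^{*}U=I_{l^{2}}$ and Lemma \ref{lem:.Deltal2}, gives
\[
\left(\Delta_{Kr}+I_{\mathscr{H}_{E}}\right)U\left(\Delta_{l^{2}}+I_{l^{2}}\right)^{-1}U^{*}=UU^{*},
\]
so the right-hand side of (\ref{eq:Bkr}) inverts $\Delta_{Kr}+I_{\mathscr{H}_{E}}$ only on $\mbox{Ran}\left(UU^{*}\right)=\overline{\mbox{Ran}\left(L\right)}=\left(\mbox{Ker}\left(L^{*}\right)\right)^{\perp}$, and it annihilates $\mbox{Ker}\left(L^{*}\right)=\mbox{Ker}\left(\Delta_{Kr}\right)$, where $B_{Kr}=(\Delta_{Kr}+I_{\mathscr{H}_{E}})^{-1}$ must instead act as the identity. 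Your corrected formula $B_{Kr}=U\left(\Delta_{l^{2}}+I_{l^{2}}\right)^{-1}U^{*}+\left(I_{\mathscr{H}_{E}}-UU^{*}\right)$ is the right one: without the extra projection the operator in (\ref{eq:Bkr}) has nontrivial kernel whenever $\mbox{Ker}\left(L^{*}\right)\neq\left\{ 0\right\} $ (which happens already in the geometric half-line example, since $\sum_{n}A^{-n}<\infty$ produces a finite-energy function harmonic off the base point), so $B^{-1}$ would not exist and the relation $B\left(\varphi+\Delta\varphi\right)=\varphi$ of Definition \ref{def:kr} would fail for any $\varphi\in\mathscr{D}_{E}$ with a component in that kernel. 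The printed identity is literally correct only when $U$ is onto, i.e.\ when $\mathscr{H}_{E}=\mbox{Fin}\left(\mathscr{H}_{E}\right)$.

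Two small points to finish your write-up. First, the fact that $U$ is a genuine isometry on all of $l^{2}\left(V'\right)$ (equivalently $\mbox{Ker}\left(\Delta_{l^{2}}\right)=0$) is supplied by the paper just after Corollary \ref{cor:Kr}, via connectedness and the maximum principle for the Markov operator $P$ with $\Delta=c\left(I-P\right)$; you are right that it must be cited, since otherwise $U^{*}U$ is only a projection and even your corrected identity would need adjusting. Second, your verification that $B:=\left(\Delta_{Kr}+I_{\mathscr{H}_{E}}\right)^{-1}$ satisfies the defining relation of Definition \ref{def:kr} is exactly the right way to pin down $B_{Kr}$ unambiguously, since the definition characterizes $B$ through $B^{-1}-I_{\mathscr{H}_{E}}=\Delta_{Kr}$ and the containment $\Delta\big|_{\mathscr{D}_{E}}\subset\Delta_{Kr}$ from Theorem \ref{thm:Deltaf}.
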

\begin{proof}
Follows from the unitary equivalence assertion in Theorem \ref{thm:LLs}.
\end{proof}
We reference here Krein\textquoteright s theory of semibounded operators
and their semibounded selfadjoint extensions. In brief outline, Krein
developed a theory (see e.g., \cite{MR2092324,MR1618628,MR0087060})
for all the selfadjoint extensions of semibounded Hermitian operators
such that the selfadjoint extensions preserve the same lower bound.
(There are other selfadjoint extensions, not with the same lower bound,
but we shall not be concerned with them here.) Krein showed that the
family of selfadjoint extensions (with the same lower bound) is in
bijective correspondence with a certain family of contractive selfadjoint
operators, which in turn has a natural order such that the two cases,
the Krein extension, and the Friedrichs extension, are the extreme
ends in the associated \textquotedblleft order-interval.\textquotedblright{}
From our construction, we note that our particular s.a. extension
in the Hilbert space $\mathscr{H}_{E}$ is in fact the Krein-extension.

From general theory we have that $U$ in (\ref{eq:ls12}) is a partial
isometry with initial space $U^{*}U=\left(\ker\left(L^{*}L\right)\right)^{\perp}=\left(\ker\left(\Delta_{l^{2}}\right)\right)^{\perp}$,
but $\ker\left(\Delta_{l^{2}}\right)=0$ on account of assumption
(\ref{enu:a4}) in Section \ref{sub:setting}, i.e., connectedness.
This is an application of a maximum principle for $\Delta_{l^{2}}$.
Recall $\Delta=c\left(I-P\right)$ where 
\begin{align*}
\left(P\xi\right)\left(x\right) & =\sum_{y\sim x}p_{xy}\xi\left(y\right),\quad p_{xy}=c_{xy}/c\left(x\right);\;\mbox{and}\\
\Delta\xi & =0\Longleftrightarrow P\xi=\xi;
\end{align*}
and $P$ is a Markov-operator.

The fact that $LL^{*}$ (as a selfadjoint operator in $\mathscr{H}_{E}$)
is the Krein extension of $\Delta_{E}$ from Lemma \ref{lem:Delta}
(\ref{enu:D4}) follows from the following:

If $f\in\mathscr{H}_{E}$, and 
\begin{equation}
\left\langle f,L\delta_{x}\right\rangle _{\mathscr{H}_{E}}=0,\quad\mbox{for}\;\forall x\in V,\label{eq:kr1}
\end{equation}
then $f\in dom\left(L^{*}\right)$, and $L^{*}f=0$. Indeed, from
(\ref{eq:kr1}), we have 
\[
0=\left\langle L\delta_{x},f\right\rangle _{\mathscr{H}_{E}}=\left\langle \delta_{x},L^{*}f\right\rangle _{l^{2}}=\left(L^{*}f\right)\left(x\right).
\]

\section{\label{sec:vc}Variation of Conductance}

In this section we study how the spectrum of the graph Laplacian $\Delta_{c}$
changes subject to variations in choice of conductance function $c$.
We prove (Theorems \ref{thm:cFri} and \ref{thm:Deltasp}) that the
natural order of conductance functions, i.e., point-wise as functions
on $E$, induces a certain similarity of the corresponding (Krein
extensions of the) two graph Laplacians. Since the spectra are typically
continuous, fine-structure of spectrum must be defined in terms of
equivalence classes of positive Borel measures on the real line. Hence
our detailed comparison of spectra must be phrased involving these;
see Definition \ref{def:spmeas}.

In this section we turn to the details on the comparison the Krein
extensions of $\Delta_{c}$ as the conductance function varies.

Let $G=\left(V,E\right)$ be a network with vertices $V$ and edges
$E$. We assume $V$ is countable infinite, and $G$ is \emph{connected},
i.e., for all $x,y$ in $V$, $\exists$ a finite path $\left\{ e_{i}=\left(x_{i}x_{i+1}\right)\right\} $
in $E$ s.t. $x_{0}=x$, $x_{n}=y$. See Fig \ref{fig:fp}.

\begin{figure}[H]
\includegraphics[scale=0.4]{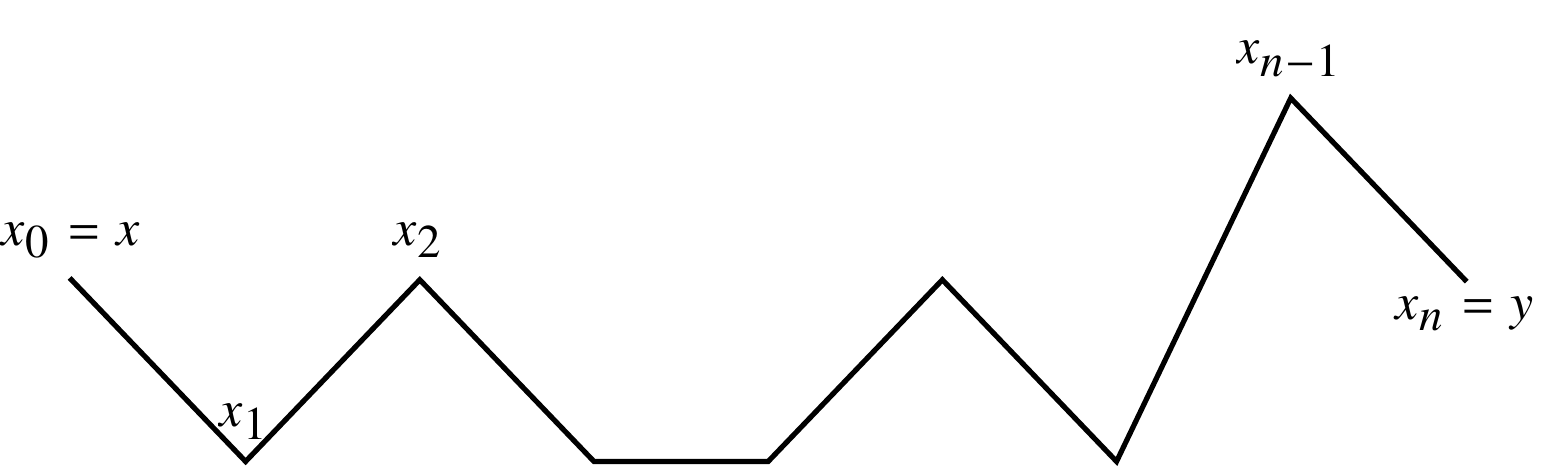}

\protect\caption{\label{fig:fp}A finite path connecting $x,y$, $\left(x_{i}x_{i+1}\right)\in E$,
$c_{x_{i}x_{i+1}}>0$, $i=1,\ldots,n-1$. }

\end{figure}

Given a conductance function $c$, let $\mathscr{H}_{C}$ be the energy
Hilbert space introduced in section \ref{sub:EnergyH}; where 
\begin{align}
\left\langle u,v\right\rangle _{C} & :=\frac{1}{2}\underset{\left(xy\right)\in E}{\sum\sum}c_{xy}(\overline{u\left(x\right)}-\overline{u\left(y\right)})\left(v\left(x\right)-v\left(y\right)\right)\label{eq:Enorm-1}\\
\left\Vert u\right\Vert _{C}^{2}: & =\frac{1}{2}\underset{\left(xy\right)\in E}{\sum\sum}c_{xy}\left|u\left(x\right)-u\left(y\right)\right|^{2}.\label{eq:Einner-1}
\end{align}

Let $c^{A}$ be another conductance function on $E$, and $\mathscr{H}_{A}$
be the corresponding energy Hilbert space. If $c\leq c^{A}$, i.e.,
$c_{xy}\leq c_{xy}^{A}$ for all $\left(xy\right)\in E$, then by
(\ref{eq:Einner-1}), we have 
\begin{equation}
\left\Vert u\right\Vert _{C}^{2}\leq\left\Vert u\right\Vert _{A}^{2},\;\forall u\in\mathscr{H}_{A};\label{eq:c4}
\end{equation}
and so $\mathscr{H}_{A}\subset\mathscr{H}_{C}$. 

\begin{figure}[H]
\begin{tabular}{cc}
\includegraphics[scale=0.4]{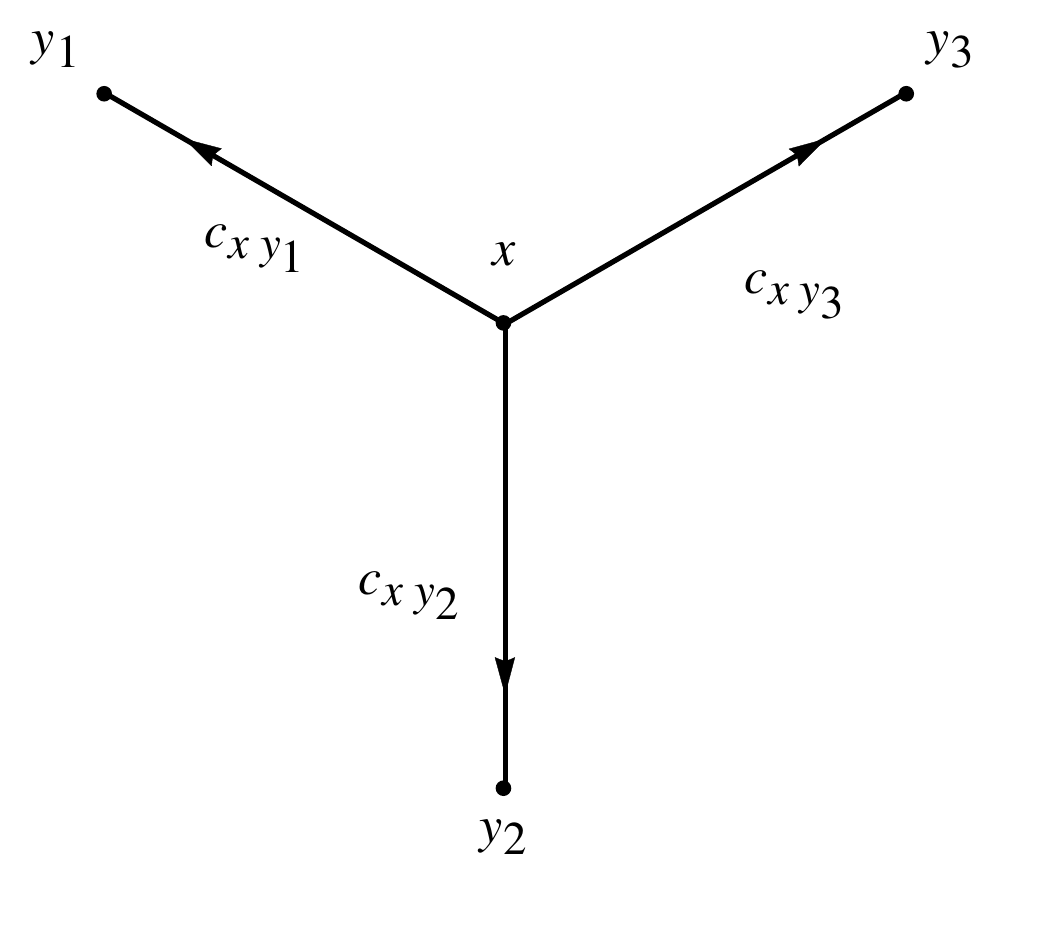} & \includegraphics[scale=0.4]{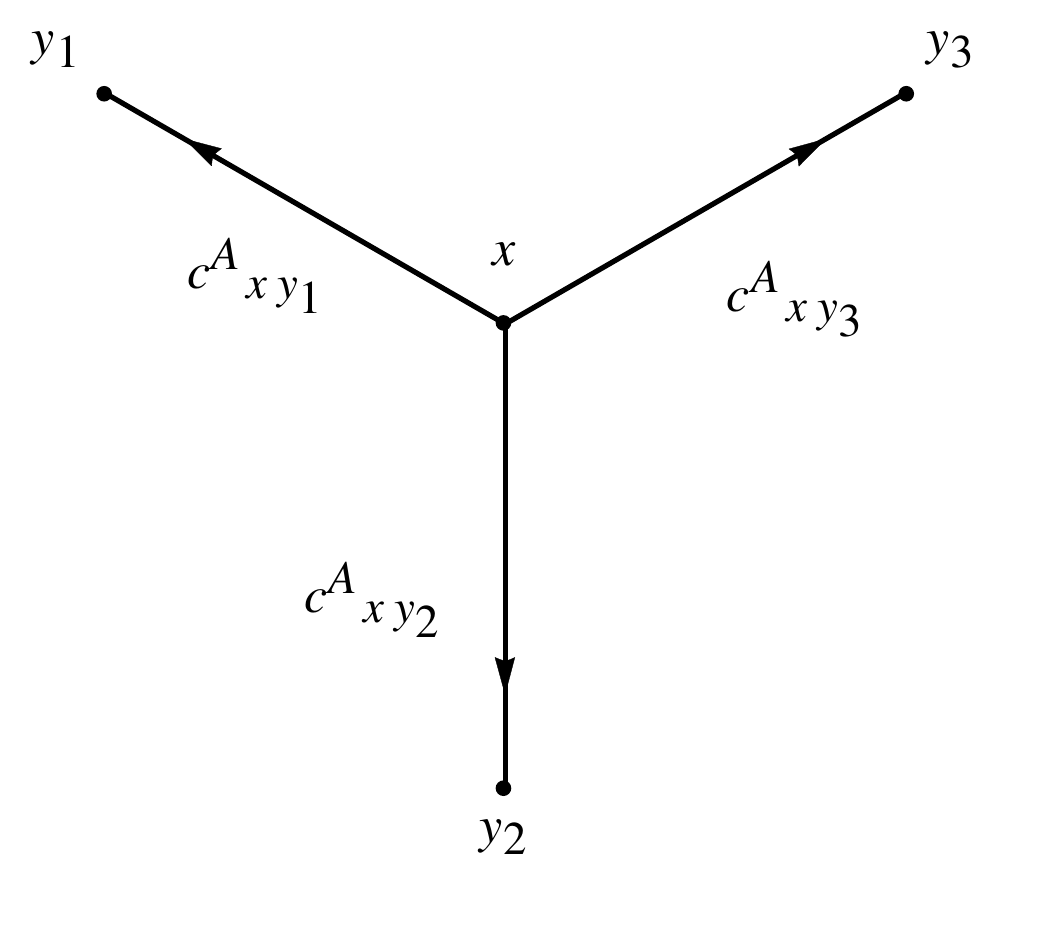}\tabularnewline
\end{tabular}

\protect\caption{$c^{A}\geq c$ point-wise on $E$.}

\end{figure}

\begin{defn}
\label{def:j}Let 
\[
j_{A}:\mathscr{H}_{A}\longrightarrow\mathscr{H}_{C},\quad j_{A}u=u,\;\forall u\in\mathscr{H}_{A}
\]
be the natural inclusion mapping. It follows from (\ref{eq:c4}) that
$j_{A}$ is a continuous (contractive) inclusion.\end{defn}
\begin{lem}
\label{lem:jdipole}Let $c,c^{A},\mathscr{H}_{C},\mathscr{H}_{A}$
and $j_{A}$ be as before. 
\begin{enumerate}
\item Then, 
\begin{equation}
v_{xy}^{\left(A\right)}=j_{A}^{*}v_{xy}\label{eq:jdp}
\end{equation}
where $v_{xy}$ and $v_{xy}^{\left(A\right)}$ are the dipoles in
the respective energy spaces satisfying 
\begin{equation}
\begin{split}\begin{cases}
\left\langle v_{xy},u\right\rangle _{C}=u\left(x\right)-u\left(y\right) & \forall u\in\mathscr{H}_{C}\\
\langle v_{xy}^{\left(A\right)},u\rangle_{A}=u\left(x\right)-u\left(y\right) & \forall u\in\mathscr{H}_{A}\subset\mathscr{H}_{C}
\end{cases}\end{split}
\label{eq:c7}
\end{equation}
(See Lemma \ref{lem:dipole}, and Fig \ref{fig:dipole}  for an illustration.)
\item 
\begin{equation}
j_{A}\left(\delta_{x}^{\left(A\right)}\right)=\delta_{x},\;\forall x\in V.\label{eq:jdel}
\end{equation}

\item 
\begin{equation}
j_{A}^{*}\left(\delta_{x}\right)=\sum_{y\sim x}c_{xy}v_{xy}^{\left(A\right)},\;\forall x\in V.\label{eq:jdelta1}
\end{equation}

\item The following diagram commutes:
\[
\xymatrix{\mathscr{H}_{A}\ar[d]_{\Delta_{A}} & \mathscr{H}_{C}\ar[d]^{\Delta\left(=\Delta_{C}\right)}\ar[l]_{j_{A}^{*}}\\
\mathscr{H}_{A}\ar[r]_{j_{A}} & \mathscr{H}_{C}
}
\]
i.e., 
\begin{equation}
j_{A}\Delta_{A}j_{A}^{*}=\Delta\label{eq:jlap}
\end{equation}
valid on $span\left\{ v_{xy}:x,y\in V\right\} $.
\end{enumerate}
\end{lem}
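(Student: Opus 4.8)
The plan is to handle the four assertions in order, deriving each of (\ref{eq:jdp}), (\ref{eq:jdel}), (\ref{eq:jdelta1}) from the uniqueness of dipoles in Lemma \ref{lem:dipole} together with the fact that $j_A$ is literally the identity on functions, and then to assemble the commuting square (\ref{eq:jlap}) by composing those three identities with the fundamental relation $\Delta v_{xy}=\delta_x-\delta_y$ from Lemma \ref{lem:Delta}.

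For part (1), I would test the adjoint against dipoles. Since $j_A$ is the inclusion, $\left\langle j_A^*w,u\right\rangle_A=\left\langle w,u\right\rangle_C$ for every $w\in\mathscr{H}_C$ and $u\in\mathscr{H}_A\subset\mathscr{H}_C$. Taking $w=v_{xy}$ and using $\left\langle v_{xy},u\right\rangle_C=u(x)-u(y)$ (legitimate for $u\in\mathscr{H}_A$ precisely because $\mathscr{H}_A\subset\mathscr{H}_C$), I obtain $\left\langle j_A^*v_{xy},u\right\rangle_A=u(x)-u(y)$ for all $u\in\mathscr{H}_A$. This is exactly the characterizing equation of the $\mathscr{H}_A$-dipole, so uniqueness in Lemma \ref{lem:dipole} forces $j_A^*v_{xy}=v_{xy}^{(A)}$, which is (\ref{eq:jdp}).

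Part (2) is immediate: $\delta_x$ is a finitely supported function, hence a genuine element of both energy spaces, and $j_A$ acts as the identity on functions, so $j_A\delta_x^{(A)}=\delta_x$. For part (3) I would again pair against $u\in\mathscr{H}_A$: by the same adjoint relation and Lemma \ref{lem:Delta} (\ref{enu:D5}), $\left\langle j_A^*\delta_x,u\right\rangle_A=\left\langle \delta_x,u\right\rangle_C=(\Delta u)(x)=\sum_{y\sim x}c_{xy}(u(x)-u(y))$. Comparing this with $\left\langle \sum_{y\sim x}c_{xy}v_{xy}^{(A)},u\right\rangle_A=\sum_{y\sim x}c_{xy}(u(x)-u(y))$ and invoking uniqueness once more yields (\ref{eq:jdelta1}); I would flag here that the coefficients are the \emph{smaller} conductances $c_{xy}$, inherited from the $c$-Laplacian, not from $c^A$.

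Finally, for the commuting diagram I would evaluate $j_A\Delta_A j_A^*$ on a generator $v_{xy}$: part (1) gives $j_A^*v_{xy}=v_{xy}^{(A)}$; the relation $\Delta_A v_{xy}^{(A)}=\delta_x^{(A)}-\delta_y^{(A)}$ (Lemma \ref{lem:Delta} applied in $\mathscr{H}_A$) is the middle step; part (2) gives $j_A(\delta_x^{(A)}-\delta_y^{(A)})=\delta_x-\delta_y$; and $\Delta v_{xy}=\delta_x-\delta_y$ (the same relation in $\mathscr{H}_C$) identifies the output with $\Delta v_{xy}$. Linearity then extends this from dipoles to all of $\mathrm{span}\{v_{xy}\}$, which is (\ref{eq:jlap}). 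I do not anticipate a real obstacle: everything is confined to the algebraic span of dipoles, so there are no domain or convergence issues, and each step is a one-line application of a dipole identity. The only point requiring genuine care is the standing identification of vectors in the energy spaces with functions on $V$ (modulo the constants annihilated by the seminorm), since it is this identification that lets us speak of "the same $\delta_x$" in $\mathscr{H}_A$ and $\mathscr{H}_C$ and so legitimizes parts (2) and (3).
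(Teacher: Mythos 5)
Your proposal is correct and follows essentially the same route as the paper's proof: parts (1) and (3) by pairing against $u\in\mathscr{H}_{A}$ via the adjoint relation and invoking uniqueness of dipoles, and part (4) by chaining $j_{A}^{*}v_{xy}=v_{xy}^{(A)}$, $\Delta_{A}v_{xy}^{(A)}=\delta_{x}^{(A)}-\delta_{y}^{(A)}$, and part (2) on the generators. The only cosmetic difference is in part (2), where the paper verifies $j_{A}\delta_{x}^{(A)}=\delta_{x}$ by testing against the dipoles $v_{st}$ while you read it off directly from $j_{A}$ being the identity inclusion on finitely supported functions; both are sound.
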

\begin{proof}
Below we consider pairs of vertices $x,y$, and $s,t$ as follows:
\begin{enumerate}
\item Let $u\in\mathscr{H}_{A}$, then 
\[
u\left(x\right)-u\left(y\right)=\left\langle v_{xy},j_{A}u\right\rangle _{C}=\left\langle j_{A}^{*}v_{xy},u\right\rangle _{A};
\]
which implies $j_{A}^{*}v_{xy}=v_{xy}^{\left(A\right)}$. 
\item For all $v_{st}\in\mathscr{H}_{C}$, 
\begin{align*}
\left\langle j_{A}\delta_{x}^{\left(A\right)},v_{st}\right\rangle _{C} & =\left\langle \delta_{x}^{\left(A\right)},j_{A}^{*}v_{st}\right\rangle _{A}=\left\langle \delta_{x}^{\left(A\right)},v_{st}^{\left(A\right)}\right\rangle _{A}\\
 & =\delta_{s,x}^{\left(A\right)}-\delta_{t,x}^{\left(A\right)}=\delta_{s,x}-\delta_{t,x}=\left\langle \delta_{x},v_{st}\right\rangle _{C}
\end{align*}
and (\ref{eq:jdel}) follows.
\item For all $u\in\mathscr{H}_{A}$, we have 
\begin{align*}
\left\langle j_{A}^{*}\delta_{x},u\right\rangle _{A} & =\left\langle \delta_{x},u\right\rangle _{c}=\left(\Delta_{c}u\right)(x)\\
 & =\sum_{y\sim x}c_{xy}\left(u\left(x\right)-u\left(y\right)\right)\\
 & =\left\langle \sum\nolimits _{y\sim x}c_{xy}v_{xy}^{\left(A\right)},u\right\rangle _{A};
\end{align*}
eq. (\ref{eq:jdelta1}) follows from this.
\item Let $v_{xy}\in\mathscr{H}_{C}$, then 
\begin{eqnarray*}
j_{A}\Delta_{A}j_{A}^{*}v_{xy} & \underset{\left(\text{by }\left(\ref{eq:jdp}\right)\right)}{=} & j\Delta_{A}v_{xy}^{\left(A\right)}\\
 & \underset{\text{lem \ref{lem:Delta}}}{=} & j_{A}\left(\delta_{x}^{\left(A\right)}-\delta_{y}^{\left(A\right)}\right)\\
 & \underset{\left(\text{by }\left(\ref{eq:jdel}\right)\right)}{=} & \delta_{x}-\delta_{y}\\
 & \underset{\text{lem \ref{lem:Delta}}}{=} & \Delta_{C}v_{xy}
\end{eqnarray*}
which gives (\ref{eq:jlap}).
\end{enumerate}
\end{proof}

Let $\left(V,E\right)$ be as above, and $\mathscr{H}_{C}$ and $\mathscr{H}_{A}$
be the energy Hilbert spaces. Fix a base point $o\in V$, and set
$v_{x}:=v_{xo}$, $v_{x}^{A}:=v_{xo}^{A}$, for all $x\in V':=V\backslash\left\{ o\right\} $.

Let 
\begin{align*}
G\left(x,y\right) & =\left\langle v_{x},v_{y}\right\rangle _{\mathscr{H}}\\
G^{\left(A\right)}\left(x,y\right) & =\left\langle v_{x},v_{y}\right\rangle _{\mathscr{H}_{A}}
\end{align*}
be the respective Gramians. 
\begin{lem}
If $c\leq c^{A}$ on $E$, let $j_{A}:\mathscr{H}_{A}\rightarrow\mathscr{H}$
be the natural inclusion as before.
\begin{enumerate}[label=(\roman{enumi}),ref=\roman{enumi}]
\item \label{enu:jac1}Then 
\[
\left\langle v_{x},\left(j_{A}j_{A}^{*}\right)v_{y}\right\rangle _{\mathscr{H}_{C}}=G^{\left(A\right)}\left(x,y\right)
\]
for all $x,y$ in $V'$; and 
\item \label{enu:jac2}We have $\Delta_{y}G_{xy}=\delta_{xy}$.
\end{enumerate}
\end{lem}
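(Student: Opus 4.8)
For part (\ref{enu:jac1}), the plan is to unfold $j_A j_A^*$ via the defining adjoint relation for the contractive inclusion $j_A:\mathscr{H}_A\to\mathscr{H}_C$ from Definition \ref{def:j}. Since $\langle w, j_A u\rangle_{\mathscr{H}_C}=\langle j_A^* w, u\rangle_{\mathscr{H}_A}$ for $u\in\mathscr{H}_A$ and $w\in\mathscr{H}_C$, I would take $w=v_x$ and $u=j_A^* v_y$ to obtain $\langle v_x, j_A j_A^* v_y\rangle_{\mathscr{H}_C}=\langle j_A^* v_x, j_A^* v_y\rangle_{\mathscr{H}_A}$. Next I would invoke Lemma \ref{lem:jdipole}, equation (\ref{eq:jdp}), which (specialized to the base point $o$) gives $j_A^* v_x=v_x^A$ and $j_A^* v_y=v_y^A$. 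Hence the right-hand side equals $\langle v_x^A, v_y^A\rangle_{\mathscr{H}_A}=G^{(A)}(x,y)$, which is the assertion. No analytic estimates enter; the entire content is the dipole transformation law already established in Lemma \ref{lem:jdipole}.

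For part (\ref{enu:jac2}), the strategy is to rewrite the Gramian columnwise using the reproducing property of the dipoles (Lemma \ref{lem:dipole}). Because the dipoles are real-valued, $G(x,y)=\langle v_x, v_y\rangle_{\mathscr{H}_C}=\langle v_y, v_x\rangle_{\mathscr{H}_C}=v_x(y)-v_x(o)$; that is, as a function of $y$ the Gramian is $v_x$ shifted by the additive constant $v_x(o)$. Applying $\Delta=\Delta_C$ in the $y$-variable then annihilates the constant, since a difference operator kills constants, leaving $\Delta_y G(x,y)=(\Delta v_x)(y)$. By Lemma \ref{lem:Delta} we have $\Delta v_x=\delta_x-\delta_o$, so $\Delta_y G(x,y)=\delta_x(y)-\delta_o(y)$. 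As the statement ranges over $x,y\in V'=V\setminus\{o\}$, the term $\delta_o(y)$ vanishes, and we are left with $\Delta_y G(x,y)=\delta_{xy}$, as claimed; this is precisely the conductance-$c$ instance of the identity (\ref{eq:E4}) recorded earlier.

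Neither computation hides a genuine analytic difficulty, so the main point requiring care is convention bookkeeping. In (\ref{enu:jac1}) I must keep straight which inner product lives on which space, and read $G^{(A)}$ as the Gramian of the $\mathscr{H}_A$-dipoles $v_x^A, v_y^A$ via the identification $v_x^A=j_A^* v_x$. In (\ref{enu:jac2}) the two slightly delicate steps are the real-valuedness of the dipoles, which lets me swap the two entries of the inner product, and the restriction to $V'$, which discards the $\delta_o$ contribution. It is exactly this base-point restriction that converts $\delta_x-\delta_o$ into the clean Kronecker symbol $\delta_{xy}$, so I would state it explicitly rather than leave it implicit.
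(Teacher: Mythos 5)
Your proof is correct and follows essentially the same route as the paper: part (i) is verbatim the paper's computation $\left\langle v_{x},j_{A}j_{A}^{*}v_{y}\right\rangle _{\mathscr{H}_{C}}=\left\langle j_{A}^{*}v_{x},j_{A}^{*}v_{y}\right\rangle _{\mathscr{H}_{A}}=\left\langle v_{x}^{A},v_{y}^{A}\right\rangle _{\mathscr{H}_{A}}$ via equation (\ref{eq:jdp}). For part (ii) the paper merely records the two facts $G(x,y)=v_{y}(x)-v_{y}(o)$ and $\Delta_{y}G_{xy}=\delta_{xy}$ without derivation, whereas you supply the missing steps (symmetry of the real Gramian, annihilation of the constant by $\Delta$, $\Delta v_{x}=\delta_{x}-\delta_{o}$, and the restriction to $V'$ that kills the $\delta_{o}$ term) — all of which are correct and in the spirit of what the paper intends.
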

\begin{proof}
For (\ref{enu:jac1}), we have 
\begin{align*}
\left\langle v_{x},j_{A}j_{A}^{*}v_{y}\right\rangle _{\mathscr{H}} & =\left\langle j_{A}^{*}v_{x},j_{A}^{*}v_{y}\right\rangle _{\mathscr{H}_{A}}\\
 & =\left\langle v_{x}^{A},v_{y}^{A}\right\rangle _{\mathscr{H}_{A}}=G^{\left(A\right)}\left(x,y\right).
\end{align*}
See the diagram below. 
\[
\xymatrix{\mathscr{H}_{A}\ar@/^{1pc}/[r]^{j_{A}} & \mathscr{H}\ar@/^{1pc}/[l]^{j_{A}^{*}}\ar@(ur,rd)^{j_{A}j_{A}^{*}}}
\]

\uline{Proof of \mbox{(\ref{enu:jac2})}.} We have the following
facts for the Gramian $G\left(x,y\right)=\left\langle v_{x},v_{y}\right\rangle _{\mathscr{H}}$:
\begin{enumerate}
\item $G\left(x,y\right)=v_{y}\left(x\right)-v_{y}\left(o\right)$, where
$x,y\in V'=V\backslash\left\{ o\right\} $, and $o$ is a fixed chosen
base point;
\item $\Delta_{y}G_{xy}=\delta_{xy}$.
\end{enumerate}
\end{proof}
\begin{cor}
The following holds:
\[
v_{x}^{A}\left(y\right)-v_{x}^{A}\left(o\right)=\left(j_{A}j_{A}^{*}\right)\left(v_{x}\right)\left(y\right)-\left(j_{A}j_{A}^{*}\right)\left(v_{x}\right)\left(o\right).
\]

\end{cor}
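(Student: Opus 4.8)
The plan is to show that both sides of the claimed identity equal the single number $\langle v_{y}^{A},v_{x}^{A}\rangle_{A}=G^{\left(A\right)}\left(y,x\right)$, by reading each side through the appropriate reproducing (dipole) property. The essential inputs are the two reproducing identities $\langle v_{yo},u\rangle_{C}=u\left(y\right)-u\left(o\right)$ for $u\in\mathscr{H}_{C}$ and $\langle v_{yo}^{\left(A\right)},u\rangle_{A}=u\left(y\right)-u\left(o\right)$ for $u\in\mathscr{H}_{A}$ (Lemma \ref{lem:dipole}), together with the identity $j_{A}^{*}v_{x}=v_{x}^{A}$ established in Lemma \ref{lem:jdipole} (\ref{eq:jdp}).

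First I would treat the left-hand side. Since $v_{x}^{A}\in\mathscr{H}_{A}$ and $v_{y}^{A}=v_{yo}^{\left(A\right)}$ is the dipole reproducing the difference $u\left(y\right)-u\left(o\right)$ on $\mathscr{H}_{A}$, applying the reproducing property with $u=v_{x}^{A}$ gives immediately
\[
v_{x}^{A}\left(y\right)-v_{x}^{A}\left(o\right)=\langle v_{y}^{A},v_{x}^{A}\rangle_{A}.
\]
For the right-hand side, set $w:=\left(j_{A}j_{A}^{*}\right)\left(v_{x}\right)$, which is a vector of $\mathscr{H}_{C}$; applying the $\mathscr{H}_{C}$-reproducing property to $w$ yields
\[
w\left(y\right)-w\left(o\right)=\langle v_{y},w\rangle_{C}=\langle v_{y},j_{A}j_{A}^{*}v_{x}\rangle_{C}.
\]

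The bridge between the two is the adjoint relation for $j_{A}$. Using $\langle a,j_{A}b\rangle_{C}=\langle j_{A}^{*}a,b\rangle_{A}$ with $a=v_{y}$ and $b=j_{A}^{*}v_{x}$, and then substituting $j_{A}^{*}v_{x}=v_{x}^{A}$ and $j_{A}^{*}v_{y}=v_{y}^{A}$ from (\ref{eq:jdp}), I obtain
\[
\langle v_{y},j_{A}j_{A}^{*}v_{x}\rangle_{C}=\langle j_{A}^{*}v_{y},j_{A}^{*}v_{x}\rangle_{A}=\langle v_{y}^{A},v_{x}^{A}\rangle_{A},
\]
which matches the left-hand side and completes the argument. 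Equivalently, one may invoke part (\ref{enu:jac1}) of the preceding lemma directly, together with self-adjointness of $j_{A}j_{A}^{*}$ and the reality of the dipoles, to identify $\langle v_{y},j_{A}j_{A}^{*}v_{x}\rangle_{C}$ with $G^{\left(A\right)}\left(y,x\right)$.

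The computation is routine; the only point that genuinely needs care is the interpretation of the pointwise expressions. Vectors in $\mathscr{H}_{C}$ are equivalence classes of functions on $V$, so a symbol such as $w\left(y\right)$ is not individually meaningful, but the differences $w\left(y\right)-w\left(o\right)$ are well-defined --- this is precisely the content of the reproducing property --- so every evaluation appearing in the statement and in the proof is legitimate. Hence there is no real obstacle beyond first confirming that $\left(j_{A}j_{A}^{*}\right)\left(v_{x}\right)$ lands in $\mathscr{H}_{C}$, so that its dipole difference makes sense, before applying the $\mathscr{H}_{C}$-reproducing identity.
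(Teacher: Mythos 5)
Your argument is correct and follows essentially the same route the paper intends: the corollary is stated immediately after the lemma identifying $\left\langle v_{x},\left(j_{A}j_{A}^{*}\right)v_{y}\right\rangle _{\mathscr{H}_{C}}$ with the Gramian $G^{\left(A\right)}\left(x,y\right)$, and your proof simply combines that identity (or re-derives it via $j_{A}^{*}v_{x}=v_{x}^{A}$ and the adjoint relation) with the reproducing property of the dipoles on each side. Your closing remark about evaluations only being well-defined up to the difference $w\left(y\right)-w\left(o\right)$ is a worthwhile clarification that the paper leaves implicit.
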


\begin{cor}
Let $\left(V,E,c,o,\Delta\left(=\Delta_{c}\right),\mathscr{H}_{E}\right)$
be as above. Let $c_{A}$ be a second conductance function defined
on $E$ and satisfying $c_{A}\geq c$ point-wise on $E$. (Note $c_{A}$
may be unbounded, even though $c$ might be bounded.) Let the operators
$j_{A}$ and $j_{A}^{*}$ be as above, i.e., 
\begin{equation}
\xymatrix{\mathscr{H}_{A}\ar@/^{1pc}/[r]^{j_{A}} & \mathscr{H}_{E}\ar@/^{1pc}/[l]^{j_{A}^{*}}}
\label{eq:ff2-1}
\end{equation}
then 
\begin{equation}
\mbox{Ker}\left(j_{A}\right)=\left\{ 0\right\} .\label{eq:ff2-2}
\end{equation}
\end{cor}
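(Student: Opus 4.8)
The plan is to \emph{not} attempt injectivity directly from the norm bound (\ref{eq:c4}): the inequality $\left\Vert u\right\Vert _{C}^{2}\le\left\Vert u\right\Vert _{A}^{2}$ runs the wrong way for this purpose, since $\left\Vert u\right\Vert _{C}=0$ gives no upper control on $\left\Vert u\right\Vert _{A}$. Instead I would use the general Hilbert-space identity $\mbox{Ker}\left(j_{A}\right)=\bigl(\overline{\mbox{Ran}\,(j_{A}^{*})}\bigr)^{\perp}$ and reduce the statement to the single assertion that the range of the adjoint $j_{A}^{*}:\mathscr{H}_{E}\rightarrow\mathscr{H}_{A}$ is dense in $\mathscr{H}_{A}$.

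The key step is Lemma \ref{lem:jdipole}\,(\ref{eq:jdp}), which gives $j_{A}^{*}v_{xy}=v_{xy}^{\left(A\right)}$ for every pair $x,y\in V$. Hence $\mbox{Ran}\,(j_{A}^{*})$ contains all the dipole vectors $v_{xy}^{\left(A\right)}$ of the finer energy space $\mathscr{H}_{A}$. Applying Theorem \ref{thm:eframe} to the network $\left(V,E,c^{A}\right)$, the rescaled dipoles $\{\sqrt{c_{xy}^{A}}\,v_{xy}^{\left(A\right)}\}_{\left(xy\right)\in E^{\left(ori\right)}}$ form a Parseval frame for $\mathscr{H}_{A}$; since a Parseval frame has lower frame bound $b_{1}=1>0$, it is total, so the finite linear span of the $v_{xy}^{\left(A\right)}$ is dense in $\mathscr{H}_{A}$. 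This dense span lies inside $\mbox{Ran}\,(j_{A}^{*})$, whence $\overline{\mbox{Ran}\,(j_{A}^{*})}=\mathscr{H}_{A}$ and therefore $\mbox{Ker}\left(j_{A}\right)=\left\{ 0\right\} $.

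Equivalently, and perhaps more transparently, I would argue directly: suppose $u\in\mathscr{H}_{A}$ with $j_{A}u=0$ in $\mathscr{H}_{C}$. Testing against the dipoles $v_{xy}$ of $\mathscr{H}_{C}$ and using the adjoint relation together with $v_{xy}^{\left(A\right)}=j_{A}^{*}v_{xy}$ yields
\[
\left\langle v_{xy}^{\left(A\right)},u\right\rangle _{A}=\left\langle j_{A}^{*}v_{xy},u\right\rangle _{A}=\left\langle v_{xy},j_{A}u\right\rangle _{C}=0
\]
for all $x,y\in V$, so that $u$ is orthogonal to a total family in $\mathscr{H}_{A}$ and hence $u=0$. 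Reading the left-hand side through the defining property (\ref{eq:c7}), namely $\left\langle v_{xy}^{\left(A\right)},u\right\rangle _{A}=u\left(x\right)-u\left(y\right)$, gives the complementary picture: $j_{A}u=0$ forces every difference $u\left(x\right)-u\left(y\right)$ to vanish, so by connectedness of $\left(V,E\right)$ the representative of $u$ is constant, and constants are the zero vector of every energy space. I do not expect a genuine obstacle here; the only point requiring care is the warning in the first paragraph — the content of the corollary is precisely that no nonzero energy vector of $\mathscr{H}_{A}$ collapses to $0$ under the coarser norm, and this is exactly what the density of $\mbox{Ran}\,(j_{A}^{*})$ rules out.
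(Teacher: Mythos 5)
Your proposal is correct and follows essentially the same route as the paper's own proof: both pass through the identity $\mbox{Ker}\left(j_{A}\right)=\mbox{Ran}\left(j_{A}^{*}\right)^{\perp}$, use $j_{A}^{*}v_{xy}=v_{xy}^{\left(A\right)}$ from Lemma \ref{lem:jdipole} to place all dipoles of $\mathscr{H}_{A}$ in the range of $j_{A}^{*}$, and invoke the Parseval frame of Theorem \ref{thm:eframe} for totality. Your unwound second argument and the remark that $j_{A}u=0$ forces $u$ to be constant by connectedness are correct restatements of the same mechanism.
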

\begin{proof}
For operators in Hilbert space, we denote by $\mbox{Ker}$, and $\mbox{Ran}$,
kernel and range, respectively. We have 
\begin{equation}
\mbox{Ker}\left(j_{A}\right)=\mbox{Ran}\left(j_{A}^{*}\right)^{\perp}\label{eq:ff2-3}
\end{equation}
with ``$\perp$'' on the RHS in (\ref{eq:ff2-3}) denoting ortho-complement.
But by Lemma \ref{lem:jdipole} (eq. (\ref{eq:jdp})), we note that
all dipole vectors $v_{xy}^{\left(A\right)}$, for $x$ and $y$ in
$V$, are in $\mbox{Ran}\left(j_{A}^{*}\right)$. Since their span
is dense in $\mathscr{H}_{A}$, by Lemma \ref{lem:eframe}, the desired
conclusion (\ref{eq:ff2-2}) follows, i.e., $\mbox{Ker}\left(j_{A}\right)=\left\{ 0\right\} $.\end{proof}
\begin{cor}
Let $c$ and $c_{A}$ be as above, $c\leq c_{A}$ assumed to hold
on $E$. Then the partial isometry $W:\mathscr{H}_{A}\hookrightarrow\mathscr{H}_{E}$,
in 
\begin{equation}
j_{A}=W\left(j_{A}^{*}j_{A}\right)^{\frac{1}{2}}=\left(j_{A}^{*}j_{A}\right)^{\frac{1}{2}}W\label{eq:ff2-4}
\end{equation}
is isometric, i.e., 
\begin{equation}
W^{*}W=I_{\mathscr{H}_{A}}.\label{eq:ff2-5}
\end{equation}
\end{cor}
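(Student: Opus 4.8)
The plan is to read off the conclusion directly from the polar decomposition in (\ref{eq:ff2-4}) together with the injectivity of $j_{A}$ established in the preceding corollary (eq. (\ref{eq:ff2-2})). First observe that $j_{A}:\mathscr{H}_{A}\rightarrow\mathscr{H}_{E}$ is a \emph{bounded} (indeed contractive) operator between Hilbert spaces by (\ref{eq:c4}), so the standard polar decomposition theory applies without the domain subtleties that arise for the unbounded Laplacians. Writing $|j_{A}|:=(j_{A}^{*}j_{A})^{1/2}$, the factorization $j_{A}=W|j_{A}|$ exhibits $W$ as a partial isometry, and the single general fact I need is that for any partial isometry $W$ one has $W^{*}W=P$, where $P$ is the orthogonal projection onto the \emph{initial space} of $W$, namely $\overline{\mbox{Ran}(|j_{A}|)}$. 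Thus (\ref{eq:ff2-5}) will reduce to showing that this initial space is all of $\mathscr{H}_{A}$.

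The key intermediate step is to identify the initial space with $(\mbox{Ker}(j_{A}))^{\perp}$. For this I would use the computation
\[
\left\Vert |j_{A}|u\right\Vert _{A}^{2}=\left\langle j_{A}^{*}j_{A}u,u\right\rangle _{A}=\left\langle j_{A}u,j_{A}u\right\rangle _{C}=\left\Vert j_{A}u\right\Vert _{C}^{2},
\]
which shows $|j_{A}|u=0\Longleftrightarrow j_{A}u=0$, hence $\mbox{Ker}(|j_{A}|)=\mbox{Ker}(j_{A})$. Since $|j_{A}|$ is selfadjoint, $\overline{\mbox{Ran}(|j_{A}|)}=\mbox{Ker}(|j_{A}|)^{\perp}=\mbox{Ker}(j_{A})^{\perp}$, so the initial space of $W$ is exactly $\mbox{Ker}(j_{A})^{\perp}$ and therefore $W^{*}W$ is the projection onto $\mbox{Ker}(j_{A})^{\perp}$.

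Finally, I would invoke the preceding corollary, which gives $\mbox{Ker}(j_{A})=\{0\}$ via (\ref{eq:ff2-2}) (that corollary in turn rests on the density of the span of the dipoles $v_{xy}^{(A)}$ in $\mathscr{H}_{A}$, guaranteed by the Parseval frame of Lemma \ref{lem:eframe}). Consequently $\mbox{Ker}(j_{A})^{\perp}=\mathscr{H}_{A}$, the projection $W^{*}W$ is the identity, and (\ref{eq:ff2-5}) follows, i.e., $W$ is isometric. There is really no serious obstacle here: all the analytic content has already been absorbed into the injectivity statement $\mbox{Ker}(j_{A})=\{0\}$, and what remains is the routine fact that a partial isometry with full initial space is an isometry. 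The only point worth stating carefully is that boundedness of $j_{A}$ makes the polar decomposition and the identity $W^{*}W=P$ immediate, so I would be explicit that $j_{A}$ is contractive before quoting the decomposition.
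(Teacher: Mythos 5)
Your argument is correct and is essentially the paper's own proof: the paper simply cites the previous corollary ($\mbox{Ker}(j_{A})=\{0\}$, eq. (\ref{eq:ff2-2})) together with the polar-decomposition theorem for bounded operators, and you have just filled in the routine details of why the initial space of $W$ equals $\mbox{Ker}(j_{A})^{\perp}=\mathscr{H}_{A}$.
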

\begin{proof}
This is immediate from the previous corollary, and an application
of the polar-decomposition theorem for bounded operators, see e.g.,
\cite{DS88b,RS75}.\end{proof}
\begin{cor}
Let $V,E,c$ and $c_{A}$ be as above, with $c_{A}\geq c$, and let
the operators $j_{A}=W\left(j_{A}^{*}j_{A}\right)^{\frac{1}{2}}$,
$\Delta_{F}\left(=\left(\Delta_{c}\right)_{F}\right)$ and $\Delta_{F}^{\left(A\right)}$
be as described, i.e., the respective Krein extensions; then 
\begin{equation}
W\Delta_{F}^{\left(A\right)}W^{*}=\Delta_{F},\mbox{ and}\label{eq:ff2-6}
\end{equation}
\begin{equation}
W\Delta_{F}^{\left(A\right)}=\Delta_{F}W.\label{eq:ff2-7}
\end{equation}
\end{cor}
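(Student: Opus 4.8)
The plan is to lift the pointwise (dipole-level) intertwining of Lemma~\ref{lem:jdipole}, item~(4), to the Krein extensions, and then to read off the unitary conjugation from the polar decomposition of $j_A$. The bridge to the Krein extensions is the factorization $\Delta_{Kre}=LL^{*}$ of Theorem~\ref{thm:Deltaf}, which I would use for both conductances simultaneously.

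First I would record the operator identity that makes everything run, namely
\[
j_{A}L_{A}=L,\qquad L^{*}=L_{A}^{*}j_{A}^{*},
\]
where $L\xi=\sum_{x}\xi_{x}\delta_{x}$ and $L_{A}\xi=\sum_{x}\xi_{x}\delta_{x}^{\left(A\right)}$ are the operators of Theorem~\ref{thm:Deltaf} for $c$ and $c_{A}$. These follow at once from $j_{A}\delta_{x}^{\left(A\right)}=\delta_{x}$ (eq.~(\ref{eq:jdel})) by linearity on the common dense domain $\mathscr{D}_{l^{2}}'$. Since Theorem~\ref{thm:Deltaf}(3) gives $\Delta_{F}=LL^{*}$ and $\Delta_{F}^{\left(A\right)}=L_{A}L_{A}^{*}$, substitution yields the Krein-level analogue of the dipole relation $j_{A}\Delta_{A}j_{A}^{*}=\Delta$:
\[
\Delta_{F}=LL^{*}=\left(j_{A}L_{A}\right)\left(L_{A}^{*}j_{A}^{*}\right)=j_{A}\,\Delta_{F}^{\left(A\right)}\,j_{A}^{*}.
\]
I would justify this on the correct domains by noting that $j_{A}$ is bounded, so the only point to check is that $j_{A}$ carries $\operatorname{dom}(\Delta_{F}^{\left(A\right)})$ into $\operatorname{dom}(\Delta_{F})$; this I would control using the explicit domain description (\ref{eq:FrieDomain}) together with $j_{A}^{*}v_{xy}=v_{xy}^{\left(A\right)}$ from (\ref{eq:jdp}).

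Next I would bring in the polar decomposition $j_{A}=W\left(j_{A}^{*}j_{A}\right)^{\frac{1}{2}}=\left(j_{A}j_{A}^{*}\right)^{\frac{1}{2}}W$. The preceding corollary gives $\operatorname{Ker}(j_{A})=\{0\}$, hence $W^{*}W=I_{\mathscr{H}_{A}}$, and the standard identity $W\left(j_{A}^{*}j_{A}\right)=\left(j_{A}j_{A}^{*}\right)W$ shows that $W$ intertwines the two positive parts. The target relation $W\Delta_{F}^{\left(A\right)}W^{*}=\Delta_{F}$ would then be obtained by passing from the contraction $j_{A}$ in the factorization above to the isometry $W$; the second relation $W\Delta_{F}^{\left(A\right)}=\Delta_{F}W$ follows by right-multiplying by $W$ and using $W^{*}W=I$.

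The hard part will be exactly this last passage. Writing $j_{A}=W\left(j_{A}^{*}j_{A}\right)^{\frac{1}{2}}$ turns the factorization into
\[
\Delta_{F}=W\,\left(j_{A}^{*}j_{A}\right)^{\frac{1}{2}}\Delta_{F}^{\left(A\right)}\left(j_{A}^{*}j_{A}\right)^{\frac{1}{2}}\,W^{*},
\]
so one must account for the positive factor $\left(j_{A}^{*}j_{A}\right)^{\frac{1}{2}}$: the conclusion as stated requires that conjugation be genuinely by the isometry $W$ rather than by the full contraction $j_{A}$, i.e.\ that this factor be absorbed into the canonical structure of the Krein extension rather than producing a defect. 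I expect this to be the crux, and I would attack it by analyzing $j_{A}^{*}j_{A}$ through the frame representation of Theorem~\ref{thm:eframe} and the resolvent form $B_{Kr}=\left(I+\Delta_{Kr}\right)^{-1}$ of Definition~\ref{def:kr}, verifying the clean intertwining first on the dense span of dipoles (where Lemma~\ref{lem:jdipole}(4) already supplies $j_{A}\Delta_{A}j_{A}^{*}=\Delta$) and then extending by closure, checking carefully that no defect is introduced in the limit. Controlling this interaction between the square-root factor and the unbounded Krein extension is the step demanding the most care.
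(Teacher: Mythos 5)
Your proposal stalls at exactly the point you yourself flag as ``the crux,'' and that is a genuine gap rather than a technicality: the passage from $j_{A}\Delta_{F}^{\left(A\right)}j_{A}^{*}=\Delta_{F}$ to the asserted identity (\ref{eq:ff2-6}) is only announced as a plan of attack, never carried out. Worse, it cannot be carried out, because (\ref{eq:ff2-6}) is too strong to hold in general. Since $W^{*}W=I_{\mathscr{H}_{A}}$ (eq.\ (\ref{eq:ff2-5})), the operator $W\Delta_{F}^{\left(A\right)}W^{*}$ restricted to $\mbox{Ran}\left(W\right)$ is a unitary copy of $\Delta_{F}^{\left(A\right)}$; in particular it is unbounded whenever $\Delta_{F}^{\left(A\right)}$ is. But in Example \ref{ex:nn}, with $c\equiv1$ and $c_{n,n+1}^{\left(A\right)}=A^{n}$, $A>1$, the paper itself records that $\Delta_{c}$ is bounded while $\Delta_{A}$ is unbounded (hence so is every selfadjoint extension of it), so $W\Delta_{F}^{\left(A\right)}W^{*}$ cannot equal $\Delta_{F}$ there. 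The positive factor $\left(j_{A}^{*}j_{A}\right)^{\frac{1}{2}}$ that you isolate is precisely what compresses $\Delta_{F}^{\left(A\right)}$ enough to make $j_{A}\Delta_{F}^{\left(A\right)}j_{A}^{*}$ bounded in that example; it cannot be ``absorbed.'' So your instinct about where the difficulty sits is exactly right, but what sits there is an obstruction, not an unfinished computation. (For comparison, the paper disposes of this corollary with the single sentence that it is ``immediate from the previous two corollaries,'' i.e.\ from $\mbox{Ker}\left(j_{A}\right)=0$ and $W^{*}W=I$, which does not address the point either; the statement your first part actually supports is the relation $j_{A}\Delta_{Kre}^{\left(A\right)}j_{A}^{*}=\Delta_{Kre}$ of Theorem \ref{thm:cFri}, not (\ref{eq:ff2-6}).)

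A secondary, repairable issue: the identities $j_{A}L_{A}=L$ and $L^{*}=L_{A}^{*}j_{A}^{*}$ that you ``record at once'' are in general only the inclusions $j_{A}L_{A}\subseteq L$ and $L_{A}^{*}j_{A}^{*}\subseteq L^{*}$. Convergence in $\left\Vert \cdot\right\Vert _{C}$ does not force convergence in $\left\Vert \cdot\right\Vert _{A}$, so the closure of $j_{A}L_{A,0}$ may be strictly smaller than $j_{A}\overline{L_{A,0}}$, and $L$ may be a strict extension of $j_{A}L_{A}$. This is exactly why the paper's proof of Theorem \ref{thm:cFri} first obtains only the inclusion $\Delta_{Kre}\subseteq j_{A}\Delta_{Kre}^{\left(A\right)}j_{A}^{*}$ and then invokes maximality of selfadjoint operators (Lemma \ref{lem:Hext}) to promote it to an equality; your argument needs the same device to make its first step rigorous.
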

\begin{proof}
Immediate from the previous two corollaries. 

Note in particular that (\ref{eq:ff2-6})$\Longleftrightarrow$(\ref{eq:ff2-7})
in view of the isometry conclusion (\ref{eq:ff2-5}) in the last corollary.\end{proof}
\begin{thm}
\label{thm:cFri}Fix $G=\left(V,E\right)$. Let $o$ be a base point
in $V$, and set $V':=V\backslash\left\{ o\right\} $. Let $c$ and
$c_{A}$ be the conductance functions s.t. $c\leq c_{A}$ holds point-wise
on $E$. Let $\Delta$, $\Delta^{\left(A\right)}$ be the graph-Laplacians,
$v_{x}:=v_{xo}$, $v_{x}^{\left(A\right)}:=v_{xo}^{\left(A\right)}$,
$x\in V'$, be the dipoles. Then, 
\begin{equation}
j_{A}\Delta_{Kre}^{\left(A\right)}j_{A}^{*}=\Delta_{Kre}.\label{eq:F1}
\end{equation}

\end{thm}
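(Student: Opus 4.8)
The plan is to reduce (\ref{eq:F1}) to the factorization machinery of Theorem~\ref{thm:Deltaf} and Lemma~\ref{lem:.Deltal2}, carried out in parallel for the two networks. Applying Theorem~\ref{thm:Deltaf} to each conductance function produces closable operators $L:l^{2}(V')\to\mathscr{H}_{C}$ and $L_{A}:l^{2}(V')\to\mathscr{H}_{A}$, with $L\xi=\sum_{x}\xi_{x}\delta_{x}$ and $L_{A}\xi=\sum_{x}\xi_{x}\delta_{x}^{(A)}$, such that $\Delta_{Kre}=LL^{*}$ and $\Delta_{Kre}^{(A)}=L_{A}L_{A}^{*}$. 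The single identity that drives everything is $j_{A}\delta_{x}^{(A)}=\delta_{x}$ from Lemma~\ref{lem:jdipole}: on the common core $\mathscr{D}_{l^{2}}'$ of finitely supported, mean-zero sequences it yields
\[
j_{A}L_{A}\xi=j_{A}\sum\nolimits_{x}\xi_{x}\delta_{x}^{(A)}=\sum\nolimits_{x}\xi_{x}\delta_{x}=L\xi,
\]
that is, $j_{A}L_{A}=L$ at the level of generators, and dually $L^{*}=L_{A}^{*}j_{A}^{*}$ (passing to the adjoint of a product is legitimate here because $j_{A}$ is bounded).

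Granting the operator identities, the conclusion is a one-line computation mirroring the proof of Theorem~\ref{thm:LLs}, now with the contraction $j_{A}$ inserted between the two factors:
\[
j_{A}\Delta_{Kre}^{(A)}j_{A}^{*}=j_{A}\,L_{A}L_{A}^{*}\,j_{A}^{*}=(j_{A}L_{A})(L_{A}^{*}j_{A}^{*})=LL^{*}=\Delta_{Kre},
\]
which is exactly (\ref{eq:F1}).

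The hard part is that this formal computation conceals a genuine domain issue: since $c\le c_{A}$ pointwise, the energy estimates give $\left\Vert L\xi\right\Vert _{C}\le\left\Vert L_{A}\xi\right\Vert _{A}$, so $\operatorname{dom}(L_{A})\subsetneq\operatorname{dom}(L)$ in general, and the product $(j_{A}L_{A})(L_{A}^{*}j_{A}^{*})$ need not close up to $LL^{*}$ without further argument. I would resolve this in two steps. First, on the dense dipole span $\operatorname{span}\{v_{xy}\}\subseteq\mathscr{H}_{C}$ — which is a core for $\Delta_{Kre}$, and whose image $j_{A}^{*}v_{xy}=v_{xy}^{(A)}$ lies in $\operatorname{dom}(\Delta_{Kre}^{(A)})$ by the Krein domain criterion of Theorem~\ref{thm:domF} — Lemma~\ref{lem:jdipole} already gives $j_{A}\Delta_{Kre}^{(A)}j_{A}^{*}v_{xy}=j_{A}(\delta_{x}^{(A)}-\delta_{y}^{(A)})=\delta_{x}-\delta_{y}=\Delta_{Kre}v_{xy}$, so the two selfadjoint operators agree on a common core. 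Second, to upgrade agreement on a core to equality, I would invoke the polar decomposition $j_{A}=W(j_{A}^{*}j_{A})^{1/2}$ together with the intertwining $W\Delta_{Kre}^{(A)}=\Delta_{Kre}W$ and $W^{*}W=I_{\mathscr{H}_{A}}$ established in the preceding corollaries; the isometry $W$ transports $\operatorname{dom}(\Delta_{Kre}^{(A)})$ onto $\operatorname{dom}(\Delta_{Kre})$ exactly, which is precisely what certifies that no domain is lost when the bounded factor $j_{A}$ is applied, and that the closed extension of the core identity is the Krein extension and nothing larger.
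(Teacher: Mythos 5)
Your first half coincides with the paper's proof: both arguments rest on the factorizations $\Delta_{Kre}=LL^{*}$ and $\Delta_{Kre}^{(A)}=L_{A}L_{A}^{*}$ from Theorem~\ref{thm:Deltaf}, on the relation between $L$, $L_{A}$ and $j_{A}$ coming from $j_{A}\delta_{x}^{(A)}=\delta_{x}$, and on the formal computation $j_{A}L_{A}L_{A}^{*}j_{A}^{*}=LL^{*}$. One caveat already here: what actually holds is only the graph containment $j_{A}L_{A}\subseteq L$ (equivalently $L_{A}^{*}j_{A}^{*}\subseteq L^{*}$ on the relevant domain), not the equality $j_{A}L_{A}=L$ you assert --- indeed you yourself observe $\operatorname{dom}(L_{A})\subsetneq\operatorname{dom}(L)$ in general, which is incompatible with that equality. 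So the one-line computation delivers only a containment between the Hermitian operators $\Delta_{Kre}$ and $j_{A}\Delta_{Kre}^{(A)}j_{A}^{*}$, and you are right that the remaining work is to upgrade this to an identity of selfadjoint operators.

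It is in that remaining step that your argument has a genuine gap. Step (a) --- agreement on $\operatorname{span}\{v_{xy}\}$ --- does not suffice, because that span is \emph{not} a core for $\Delta_{Kre}$: by hypothesis (Theorem~\ref{thm:domF}) the symmetric operator $\Delta\big|_{\operatorname{span}\{v_{xy}\}}$ has deficiency indices $(k,k)$ with $k>0$, so its closure is a proper restriction of $\Delta_{Kre}$ and admits many distinct selfadjoint extensions, all of which agree on the dipole span. Hence agreeing there pins down nothing. Step (b) is circular: the intertwining $W\Delta_{Kre}^{(A)}W^{*}=\Delta_{Kre}$ with $W^{*}W=I_{\mathscr{H}_{A}}$ that you invoke is precisely the polar-decomposed form of (\ref{eq:F1}); the corollary stating it is not given an independent proof (its justification in the text is only the kernel and isometry facts about $j_{A}$, which do not by themselves yield the intertwining of the Laplacians), so you cannot use it as input to prove the theorem. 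The ingredient you are missing is Lemma~\ref{lem:Hext}: a densely defined selfadjoint operator is maximal Hermitian. The paper closes the argument by noting that the graph containments give an operator containment relating the selfadjoint $\Delta_{Kre}$ and the Hermitian $T_{A}:=j_{A}\Delta_{Kre}^{(A)}j_{A}^{*}$, whence maximality forces $T_{A}=\Delta_{Kre}$. Replacing your steps (a)--(b) by this maximality argument repairs the proof.
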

The key step of the proof are the lemmas below.
\begin{lem}
Let $\left(V,E\right)$, $c,c^{A}$ and $L,L_{A}$ be as before, assume
$c\leq c^{A}$ point-wise on $E$; and let $j_{A}:\mathscr{H}_{A}\longrightarrow\mathscr{H}$
be the natural inclusion. 
\begin{enumerate}[label=(\roman{enumi})]
\item Then 
\begin{equation}
L_{A}^{*}j_{A}^{*}\subseteq L^{*};\label{eq:F1-6}
\end{equation}
i.e., the following diagram commutes.
\[
\xymatrix{ & \mathscr{H}_{A}\ar[rd]^{L_{A}^{*}}\\
\mathscr{H}\ar[rr]_{L^{*}}\ar[ur]^{j_{A}^{*}} &  & l^{2}
}
\]

\item We have $L\supseteq j_{A}L_{A}$.
\end{enumerate}
\end{lem}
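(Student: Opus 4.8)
The plan is to reduce both inclusions to a single algebraic identity on the finitely-supported core and then propagate it, once by the defining property of the adjoint and once by continuity. First I would record the \emph{core identity}: for $\eta=(\eta_x)\in\mathscr{D}_{l^2}'$ one has $L\eta=\sum_x\eta_x\delta_x$ in $\mathscr{H}_C$ and $L_A\eta=\sum_x\eta_x\delta_x^{(A)}$ in $\mathscr{H}_A$, so applying the contraction $j_A$ term by term to this finite sum and invoking $j_A\delta_x^{(A)}=\delta_x$ from \eqref{eq:jdel} yields
\[
j_A L_A\eta=\sum_x\eta_x\,j_A\delta_x^{(A)}=\sum_x\eta_x\delta_x=L\eta,\qquad\forall\,\eta\in\mathscr{D}_{l^2}'.
\]
Everything below rests on this identity together with the facts that $j_A$ is bounded (contractive, Definition~\ref{def:j}) and that $L=\overline{L_0}$, $L_A=\overline{L_{A,0}}$.

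For part (i) I would argue by testing against the core. Fix $u\in\mathscr{H}_C$ with $w:=j_A^*u\in dom(L_A^*)$; the goal is $u\in dom(L^*)$ and $L^*u=L_A^*w$. For every $\eta\in dom(L_0)=\mathscr{D}_{l^2}'$ I would compute
\[
\langle\eta,L_A^*w\rangle_{l^2}=\langle L_A\eta,w\rangle_A=\langle L_A\eta,j_A^*u\rangle_A=\langle j_AL_A\eta,u\rangle_C=\langle L\eta,u\rangle_C,
\]
where the first equality is the definition of the adjoint $L_A^*$ (legitimate since $\eta\in dom(L_A)$ and $w\in dom(L_A^*)$), the third is the definition of $j_A^*$, and the last is the core identity. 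Thus $\eta\mapsto\langle L\eta,u\rangle_C$ agrees on the dense domain $dom(L_0)$ with the bounded functional $\eta\mapsto\langle\eta,L_A^*w\rangle_{l^2}$; by the very definition of the adjoint this forces $u\in dom(L_0^*)=dom(L^*)$ and $L^*u=L_A^*w=L_A^*j_A^*u$, which is exactly \eqref{eq:F1-6}. If one prefers an intrinsic check, the output can be matched against the description $dom(L^*)=\{u:\Delta u\in l^2(V')\}$ of Lemma~\ref{lem:T2}, using $\langle\delta_x,u\rangle_C=(\Delta u)(x)$.

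For part (ii) I would pass the core identity to the closures. Let $\xi\in dom(L_A)$; since $L_A=\overline{L_{A,0}}$ there are $\xi_n\in\mathscr{D}_{l^2}'$ with $\xi_n\to\xi$ in $l^2$ and $L_{A,0}\xi_n\to L_A\xi$ in $\mathscr{H}_A$. Applying $j_A$ and the core identity gives $L_0\xi_n=j_AL_{A,0}\xi_n\to j_AL_A\xi$ in $\mathscr{H}_C$ (here contractivity of $j_A$ transports the convergence), while $\xi_n\to\xi$ in $l^2$; since $L=\overline{L_0}$ this places $(\xi,j_AL_A\xi)$ in the graph of $L$, i.e.\ $\xi\in dom(L)$ and $L\xi=j_AL_A\xi$, so $j_AL_A\subseteq L$. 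The only genuine care points — and where I expect the argument to need the most attention — are the unbounded-operator domain bookkeeping: using $(\overline{L_0})^*=L_0^*$ to identify $dom(L^*)$ with $dom(L_0^*)$ in part (i), and, in part (ii), choosing the approximating sequence in the \emph{graph} norm of $L_A$ (so that both $\xi_n\to\xi$ and $L_{A,0}\xi_n\to L_A\xi$ hold) before pushing convergence across the bounded map $j_A$. No deficiency-index or Krein-extension input is needed; both inclusions are purely consequences of \eqref{eq:jdel} and the continuity of $j_A$.
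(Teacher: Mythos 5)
Your argument is correct, but it runs in the opposite (dual) direction from the paper's. The paper proves (i) first, by a direct computation on the span of dipoles: for $u=\sum\xi_{x}v_{x}\in\mathscr{D}_{E}$ it uses $j_{A}^{*}v_{x}=v_{x}^{\left(A\right)}$ (eq.~(\ref{eq:jdp})) together with the explicit formula $L_{A}^{*}\bigl(\sum\xi_{x}v_{x}^{\left(A\right)}\bigr)=\left(\xi_{x}\right)=L^{*}u$, and then obtains (ii) by taking adjoints of (\ref{eq:F1-6}). You instead establish the primal identity $j_{A}L_{A}\eta=L\eta$ on the $l^{2}$-core via $j_{A}\delta_{x}^{\left(A\right)}=\delta_{x}$ (eq.~(\ref{eq:jdel})), deduce (ii) by a graph-norm closure argument, and deduce (i) by testing $u$ against the core and invoking the definition of the adjoint. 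Both routes rest on Lemma~\ref{lem:jdipole}, just on different parts of it. What your version buys is precision about domains: the inclusion $L_{A}^{*}j_{A}^{*}\subseteq L^{*}$ concerns every $u$ with $j_{A}^{*}u\in dom(L_{A}^{*})$, and your adjoint-testing argument covers all such $u$, whereas the paper's computation only verifies agreement on $\mathscr{D}_{E}$ and leaves the extension implicit; likewise your graph-norm approximation for (ii) avoids the slightly delicate step of taking adjoints of an operator inclusion involving a product with a bounded factor. The paper's route is shorter and makes the role of the dipoles $v_{x}^{\left(A\right)}=j_{A}^{*}v_{x}$ more visible. Your two care points (using $L_{0}^{*}=(\overline{L_{0}})^{*}$ and approximating in the graph norm of $L_{A}$) are exactly the right ones, and both are handled correctly.
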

\begin{proof}
Let $u=\sum\xi_{x}v_{x}$, $\sum\xi_{x}=0$, be any vector in $\mathscr{D}_{E}$
(eq. (\ref{eq:D2})); see part (\ref{enu:domL}) of Theorem \ref{thm:Deltaf}.
Then,
\begin{eqnarray*}
L_{A}^{*}\left(j_{A}^{*}u\right) & = & L_{A}^{*}\left(\sum\xi_{x}j_{A}^{*}\left(v_{x}\right)\right)\\
 & \underset{\text{lem }\ref{lem:jdipole}}{=} & L_{A}^{*}\left(\sum\xi_{x}v_{x}^{\left(A\right)}\right)\\
 & = & \left(\xi_{x}\right)=L^{*}u
\end{eqnarray*}
and the assertion follows.

Proof of part (ii): Taking adjoint of eq. (\ref{eq:F1-6}).
\end{proof}

\begin{lem}
\label{lem:Hext}Every selfadjoint $H$ operator with dense domain
in a Hilbert space is maximal Hermitian.\end{lem}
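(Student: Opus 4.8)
The plan is to unwind the three definitions and then run a short closed chain of adjoint inclusions. Recall that a densely defined operator $H$ is \emph{Hermitian} (symmetric) precisely when $H\subseteq H^{*}$, is \emph{selfadjoint} when $H=H^{*}$, and is \emph{maximal Hermitian} when it admits no proper Hermitian extension. So I would begin by letting $K$ be an arbitrary Hermitian operator with $H\subseteq K$ (containment understood as graph containment), and the goal is to show $H=K$.

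The one structural ingredient is the order-reversing behaviour of the adjoint under graph containment: if $A$ and $B$ are densely defined and $A\subseteq B$, then $B^{*}\subseteq A^{*}$. Note that $K$ is automatically densely defined, since $dom(H)\subseteq dom(K)$ and $dom(H)$ is already dense by hypothesis; hence $K^{*}$ is well defined. Applying the order-reversing property to the inclusion $H\subseteq K$ yields $K^{*}\subseteq H^{*}$.

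Now I would assemble the chain of inclusions. Since $K$ is Hermitian we have $K\subseteq K^{*}$; combining this with $K^{*}\subseteq H^{*}$, with the selfadjointness hypothesis $H^{*}=H$, and with the starting inclusion $H\subseteq K$, we obtain
\[
K\subseteq K^{*}\subseteq H^{*}=H\subseteq K .
\]
Every inclusion in this chain must therefore collapse to an equality; in particular $H=K$. Since $K$ was an arbitrary Hermitian extension of $H$, this shows that $H$ has no proper Hermitian extension, i.e., $H$ is maximal Hermitian.

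There is essentially no hard step here: the entire argument is the order-reversal of the adjoint together with a loop of inclusions that pinches shut. The only point needing mild care is the unbounded-operator bookkeeping — confirming that $K$ is densely defined so that $K^{*}$ makes sense, and applying the adjoint inclusion to the operator \emph{graphs} rather than pointwise.
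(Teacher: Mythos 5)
Your argument is correct and coincides with the paper's own proof: both take an arbitrary Hermitian extension, apply the order-reversing property of the adjoint to get the chain $K\subseteq K^{*}\subseteq H^{*}=H\subseteq K$, and conclude equality. Your added remark about $K$ being automatically densely defined is a harmless (and welcome) piece of bookkeeping that the paper leaves implicit.
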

\begin{proof}
Suppose $H$ is selfadjoint, and that $T$ is a Hermitian extension;
then $H\subset T$, and by taking adjoints, we get $T^{*}\subseteq H^{*}$.
Hence 
\[
T\subseteq T^{*}\subset H^{*}=H^{*}\subseteq T
\]
so $T=H$.
\end{proof}

\begin{proof}[Proof of Theorem \ref{thm:cFri} ]
 We apply Theorem \ref{thm:Deltaf} (also see \cite{JT14}) to both
$\left(\Delta,\mathscr{H}_{C}\right)$ and $\left(\Delta^{\left(A\right)},\mathscr{H}_{A}\right)$,
and arrive at the following factorizations: 
\begin{align}
LL^{*} & =\Delta_{Kre}\;\mbox{in \ensuremath{\mathscr{H}}}\label{eq:F2}\\
L_{A}L_{A}^{*} & =\Delta_{Kre}^{\left(A\right)}\;\mbox{in }\mathscr{H}_{A}\label{eq:F3}
\end{align}
Then, by (\ref{eq:F2}), we have 
\begin{eqnarray*}
\Delta_{Kre} & \underset{\left(\ref{eq:F2}\right)}{=} & LL^{*}\\
 & \underset{\left(\ref{eq:F1-6}\right)}{\subseteq} & \left(j_{A}L_{A}\right)\left(L_{A}^{*}j_{A}^{*}\right)\\
 & = & j_{A}\left(L_{A}L_{A}^{*}\right)j_{A}^{*}\\
 & \underset{\left(\ref{eq:F3}\right)}{=} & j_{A}\Delta_{Kre}^{\left(A\right)}j_{A}^{*}.
\end{eqnarray*}

To finish the proof we apply Lemma \ref{lem:Hext} to the two operators
in $\mathscr{H}_{E}$, $H=\Delta_{Kre}$ and $T_{A}=j_{A}\Delta_{Kre}^{\left(A\right)}j_{A}^{*}$.
Here $H$ is selfadjoint on $\mathscr{H}_{E}$, and $T_{A}$ is a
Hermitian extension; hence by the Lemma \ref{lem:Hext}, $T_{A}=\Delta_{Kre}$. \end{proof}
\begin{rem}
Fix two conductance functions $c$ and $c_{A}$, and let $j_{A}$
be as before. See (\ref{eq:ff2-1}) and the diagram below: 
\[
\xymatrix{\mathscr{H}_{A}\ar@/^{1pc}/[r]^{\Delta_{F}^{\left(A\right)}}\ar@{_{(}->}[d]^{j_{A}} & \mathscr{H}_{A}\ar@{^{(}->}[d]_{j_{A}}\\
\mathscr{H}_{C}\ar@/_{1pc}/[r]_{\Delta_{F}}\ar@/^{1pc}/[u]^{j_{A}^{*}} & \mathscr{H}_{C}\ar@/_{1pc}/[u]_{j_{A}^{*}}
}
\]
Since for $u\in\mathscr{H}_{A}$, we have
\begin{equation}
\sum\sum c_{xy}\left|u\left(x\right)-u\left(y\right)\right|^{2}\leq\sum\sum c_{xy}^{\left(A\right)}\left|u\left(x\right)-u\left(y\right)\right|^{2}\label{eq:fc5}
\end{equation}
it follows that $j_{A}:\mathscr{H}_{A}\xrightarrow{j_{A}}\mathscr{H}_{C}$
is \emph{contractive}. 
\end{rem}
Since $j_{A},j_{A}^{*}$ are bounded operators, we have 
\begin{equation}
\left\Vert j_{A}^{*}\right\Vert _{\mathscr{H}_{A}}=\left\Vert j_{A}\right\Vert _{\mathscr{H}_{C}}\leq1.\label{eq:fc6}
\end{equation}

\begin{example}[Example \ref{ex:nn} revisited]
\label{exa:nn1}Let $V=\left\{ 0\right\} \cup\mathbb{Z}_{+}$, i.e.,
nearest neighbors. Let $c,c_{A}$ be two conductance functions given
by 
\begin{align*}
c_{n,n+1} & :=n\\
c_{n,n+1}^{\left(A\right)} & :=A^{n},\; A>1,\; n\in V.
\end{align*}
and we have 
\begin{align*}
\mathscr{H}_{C} & =\left\{ u:V\rightarrow\mathbb{C}\:\big|\:\sum_{n}n\left|u\left(n\right)-u\left(n+1\right)\right|^{2}<\infty\right\} \\
\mathscr{H}_{A} & =\left\{ u:V\rightarrow\mathbb{C}\:\big|\:\sum_{n}A^{n}\left|u\left(n\right)-u\left(n+1\right)\right|^{2}<\infty\right\} .
\end{align*}
Let $v_{n,n+1}$ and $v_{n,n+1}^{\left(A\right)}$ be the respective
dipoles, as illustrated in Fig \ref{fig:dipole}. 

Then, $j_{A}^{*}\left(v_{n,n+1}\right)=v_{n,n+1}^{\left(A\right)}$,
for all $n\in\mathbb{Z}_{+}$ (by (\ref{eq:jdp})), and 
\begin{align*}
\left\Vert v_{n,n+1}\right\Vert _{\mathscr{H}_{C}}^{2} & =1\\
\left\Vert v_{n,n+1}^{\left(A\right)}\right\Vert _{\mathscr{H}_{A}}^{2} & =\frac{1}{A^{n}}\rightarrow\infty\\
\left\Vert v_{n,n+1}\right\Vert _{\mathscr{H}_{A}}^{2} & =A^{n}\rightarrow0.
\end{align*}
\end{example}
\begin{lem}
In Example \ref{exa:nn1}, the operator $j_{A}j_{A}^{*}:\mathscr{H}_{C}\rightarrow\mathscr{H}_{C}$
is trace class, and 
\[
trace\left(j_{A}j_{A}^{*}\right)=\sum_{n=0}^{\infty}\frac{1}{A^{n}}.
\]
\end{lem}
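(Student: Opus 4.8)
The plan is to compute the trace by summing the diagonal of $j_{A}j_{A}^{*}$ against the natural Parseval frame of $\mathscr{H}_{C}$ provided by Theorem \ref{thm:eframe}, rather than attempting to diagonalize the (infinite, banded) matrix of $j_{A}j_{A}^{*}$ directly. First I would record two standing facts. The operator $T:=j_{A}j_{A}^{*}:\mathscr{H}_{C}\rightarrow\mathscr{H}_{C}$ is bounded, selfadjoint and \emph{positive}: indeed $T=j_{A}j_{A}^{*}\geq 0$, and it is a contraction by $\left\Vert j_{A}\right\Vert \leq 1$ from (\ref{eq:fc6}). Moreover, the oriented-edge system $\left\{ w_{n,n+1}:=\sqrt{c_{n,n+1}}\,v_{n,n+1}\right\} _{n\geq 0}$ is a Parseval frame for $\mathscr{H}_{C}$, by Theorem \ref{thm:eframe}, the edges of the nearest-neighbor graph being $\left(n,n+1\right)$, $n\geq 0$.

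The key auxiliary fact to establish is a \emph{frame-trace identity}: for a positive operator $T$ on a Hilbert space $\mathscr{H}$ with Parseval frame $\left\{ w_{j}\right\} _{j\in J}$, one has $\operatorname{trace}\left(T\right)=\sum_{j\in J}\left\langle w_{j},Tw_{j}\right\rangle $ in $\left[0,\infty\right]$; in particular $T$ is trace class precisely when the right-hand side is finite. I would prove this by writing $\left\langle w_{j},Tw_{j}\right\rangle =\Vert T^{1/2}w_{j}\Vert^{2}$, expanding each term in a fixed ONB $\left\{ e_{i}\right\} $ of $\mathscr{H}$ and using selfadjointness of $T^{1/2}$, then interchanging the two nonnegative double sums by Tonelli:
\[
\sum_{j}\Vert T^{1/2}w_{j}\Vert^{2}=\sum_{j}\sum_{i}\bigl|\langle T^{1/2}e_{i},w_{j}\rangle\bigr|^{2}=\sum_{i}\sum_{j}\bigl|\langle T^{1/2}e_{i},w_{j}\rangle\bigr|^{2}=\sum_{i}\Vert T^{1/2}e_{i}\Vert^{2},
\]
where the last equality applies the Parseval frame identity (\ref{eq:en1}) (with $b_{1}=b_{2}=1$) to the vector $T^{1/2}e_{i}$. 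The final sum equals $\sum_{i}\left\langle e_{i},Te_{i}\right\rangle =\operatorname{trace}\left(T\right)$. Positivity of $T$ is exactly what legitimizes both the interchange of summations and the conclusion that finiteness of the frame-diagonal sum forces $T$ to be trace class.

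With this in hand, I would apply it to $T=j_{A}j_{A}^{*}$ and the frame $\left\{ w_{n,n+1}\right\}$. Each diagonal term collapses via Lemma \ref{lem:jdipole}, namely $j_{A}^{*}v_{xy}=v_{xy}^{\left(A\right)}$ from (\ref{eq:jdp}):
\[
\bigl\langle w_{n,n+1},\,j_{A}j_{A}^{*}w_{n,n+1}\bigr\rangle_{C}=\bigl\Vert j_{A}^{*}w_{n,n+1}\bigr\Vert_{A}^{2}=c_{n,n+1}\,\bigl\Vert v_{n,n+1}^{\left(A\right)}\bigr\Vert_{A}^{2}.
\]
On the semi-infinite path the edge dipole in $\mathscr{H}_{A}$ has energy equal to the edge resistance, so $\bigl\Vert v_{n,n+1}^{\left(A\right)}\bigr\Vert_{A}^{2}=1/c_{n,n+1}^{\left(A\right)}=A^{-n}$, as already tabulated in Example \ref{exa:nn1}; together with the unit conductance $c_{n,n+1}=1$ (equivalently $\left\Vert v_{n,n+1}\right\Vert_{C}^{2}=1$, the value recorded in the example) each term equals $A^{-n}$. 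Summing over the oriented edges yields $\operatorname{trace}\left(j_{A}j_{A}^{*}\right)=\sum_{n=0}^{\infty}A^{-n}<\infty$, which establishes both trace-class membership and the asserted value.

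I expect the only genuine obstacle to be the justification of the frame-trace identity and, in particular, the passage from finiteness of the frame-diagonal sum to trace-class membership, since $\left\{ w_{n,n+1}\right\}$ is a (typically overcomplete) frame rather than an orthonormal basis. The positivity of $j_{A}j_{A}^{*}$, combined with the monotone/Tonelli interchange and the $b_{1}=b_{2}=1$ form of the Parseval estimate (\ref{eq:en1}), is precisely what removes this obstacle; once it is in place the remaining computation is immediate from the resistance values already displayed in the example.
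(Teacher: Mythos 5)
Your proposal is correct, and the terminal computation is the same as the paper's: sum $\left\langle v_{n,n+1},j_{A}j_{A}^{*}v_{n,n+1}\right\rangle _{\mathscr{H}_{C}}=\Vert v_{n,n+1}^{\left(A\right)}\Vert_{A}^{2}=A^{-n}$ over the edges, using (\ref{eq:jdp}). The difference lies in how the diagonal sum is identified with the trace. The paper simply observes that on this semi-infinite path with unit conductance the edge dipoles $\left\{ v_{n,n+1}\right\} $ form an \emph{orthonormal basis} of $\mathscr{H}_{C}$ (the graph is a tree, so the Parseval frame of Theorem \ref{thm:eframe} has no redundancy, and $\left\Vert v_{n,n+1}\right\Vert _{C}^{2}=1$), after which the trace formula is the standard ONB one. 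You instead prove a general frame-trace identity, $\operatorname{trace}(T)=\sum_{j}\left\langle w_{j},Tw_{j}\right\rangle $ for positive $T$ and any Parseval frame, via $T^{1/2}$, Tonelli, and the Parseval identity; this is a correct and standard argument, and it is precisely the right tool if one worries (as you do) about overcompleteness. What your route buys is robustness: it would compute $\operatorname{trace}(j_{A}j_{A}^{*})$ on graphs with cycles, where the edge frame is genuinely not a basis; what the paper's route buys is brevity for this particular example. One caveat you handled implicitly but should state: the example as printed sets $c_{n,n+1}:=n$, which is inconsistent with positivity at $n=0$ and with the tabulated value $\left\Vert v_{n,n+1}\right\Vert _{C}^{2}=1$; your reading $c_{n,n+1}\equiv1$ (as in Example \ref{ex:nn}) is the one consistent with the stated answer $\sum_{n\geq0}A^{-n}$, and is clearly what is intended.
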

\begin{proof}
Note the dipoles $\left\{ v_{n,n+1}:n\in V\right\} $ forms an ONB
in $\mathscr{H}_{C}$, and 
\begin{align*}
trace\left(j_{A}j_{A}^{*}\right) & =\sum_{n=0}^{\infty}\left\langle v_{n,n+1},j_{A}j_{A}^{*}v_{n,n+1}\right\rangle _{\mathscr{H}_{C}}\\
 & =\sum_{n=0}^{\infty}\left\langle j_{A}^{*}v_{n,n+1},j_{A}^{*}v_{n,n+1}\right\rangle _{\mathscr{H}_{A}}\\
 & =\sum_{n=0}^{\infty}\left\langle v_{n,n+1}^{\left(A\right)},v_{n,n+1}^{\left(A\right)}\right\rangle _{\mathscr{H}_{A}}\\
 & =\sum_{n=0}^{\infty}\frac{1}{A^{n}}<\infty.
\end{align*}

\end{proof}

\subsection{Comparing Spectra}

Let $\left(V,E,c,o,\Delta,\mathscr{H}_{E}\right)$ be a network, with
a fixed conductance function $c$ on $E$. We assume, as above, that
$\left(V,E,c\right)$ is connected, and that $\#V=\aleph_{0}$. 

We fix a second conductance function $c_{A}$ on $E$, and assume
the following estimate:
\begin{equation}
c_{A}\geq c\quad\mbox{holds on }E.\label{eq:cs1}
\end{equation}

Hence we get two energy Hilbert spaces $\mathscr{H}_{E}$ ($=\mathscr{H}_{C}$,
see (\ref{eq:Enorm-1})-(\ref{eq:Einner-1})) and $\mathscr{H}_{A}$
with the natural inclusion mapping $j_{A}:\mathscr{H}_{A}\hookrightarrow\mathscr{H}_{E}$
as follows
\begin{equation}
\xymatrix{\mathscr{H}_{A}\ar@/^{1pc}/[r]^{j_{A}} & \mathscr{H}_{E}\ar@/^{1pc}/[l]^{j_{A}*}}
\label{eq:cs2}
\end{equation}
and the respective graph Laplacians:
\begin{itemize}
\item $\Delta_{F}$ with its dense domain in $\mathscr{H}_{E}$; and
\item $\Delta_{F}^{\left(A\right)}$ with its dense domain in $\mathscr{H}_{A}$. 
\end{itemize}

We shall study the following notions from spectral theory of selfadjoint
operators:
\begin{defn}
\label{def:spmeas}Let $T$ be a selfadjoint operator with dense domain
in a Hilbert space $\mathscr{H}$; and let $P^{T}\left(\cdot\right)$
be the associate projection valued measure; see e.g., \cite{DS88b}.
For all vectors $u\in\mathscr{H}$, set 
\[
d\mu_{u}\left(\cdot\right)=\left\Vert P^{T}\left(\cdot\right)u\right\Vert ^{2}.
\]
$\mu_{u}$ is a finite Borel measure on $\mathbb{R}$, i.e., defined
on the Borel sigma algebra $\mathscr{B}$. We have:
\begin{equation}
u\in dom\left(T\right)\Longleftrightarrow\int_{\mathbb{R}}\lambda^{2}d\mu_{u}\left(\lambda\right)<\infty;\label{eq:cs3}
\end{equation}
and then 
\begin{equation}
\left\langle u,Tu\right\rangle =\int_{\mathbb{R}}\lambda d\mu_{u}\left(\lambda\right)\label{eq:cs4}
\end{equation}
where the integral is absolutely convergent. 

If $\psi$ is a Borel function, then a vector $u\in\mathscr{H}$ is
in the domain of the operator $\psi\left(T\right)$ iff $\psi\in L^{2}\left(\mu_{u}\right)$;
and then 
\begin{equation}
\left\Vert \psi\left(T\right)u\right\Vert ^{2}=\int\left|\psi\right|^{2}d\mu_{u}<\infty.\label{eq:cs5}
\end{equation}

\end{defn}

\begin{defn}
A finite positive Borel measure $\mu$ is said to be in $\mathfrak{M}\left(T\right)$
iff (Def.) $\exists u\in\mathscr{H}\backslash\left\{ 0\right\} $
such that 
\begin{equation}
d\mu=d\mu_{u}=d\left\Vert P^{T}\left(\cdot\right)u\right\Vert ^{2}.\label{eq:cs6}
\end{equation}
We say that $\mathfrak{M}\left(T\right)$ is the \emph{\uline{spectral
contents}} of the operator $T$.

If $\mu$ and $\nu$ are positive finite measures, we say that $\mu\ll\nu$
if the following implication holds:
\[
\left[S\in\mathscr{B},\;\nu\left(S\right)=0\right]\Longrightarrow\mu\left(S\right)=0;
\]
and we then introduce the Radon-Nikodym derivative $\frac{d\mu}{d\nu}$
i.e., $\frac{d\mu}{d\nu}\in L_{+}^{2}\left(\mathbb{R},\nu\right)$,
and 
\begin{equation}
\mu\left(S\right)=\int_{S}\frac{d\mu}{d\nu}\left(\lambda\right)d\nu\left(\lambda\right),\;\forall S\in\mathscr{B}.\label{eq:cs7}
\end{equation}
\end{defn}
\begin{thm}
\label{thm:Deltasp}Let $\mathscr{H}_{E}$ and $\mathscr{H}_{A}$
be as above, consider $u\in\mathscr{H}_{E}\backslash\left\{ 0\right\} $,
and $\mu=\mu_{u}\in\mathfrak{M}\left(\Delta_{Kr}\right)$; then $u_{A}:=j_{A}^{*}u$
yields $\mu_{u_{A}}^{\left(A\right)}\in\mathfrak{M}(\Delta_{Kr}^{\left(A\right)})$
if and only if $u_{A}\neq0$. In this case, we have 
\begin{equation}
\mu_{u}\ll\mu_{u_{A}}^{\left(A\right)}.\label{eq:cs8}
\end{equation}
\end{thm}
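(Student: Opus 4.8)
The plan is to reduce the statement to a spectral comparison inside the single space $\mathscr{H}_E$, using the isometric intertwining produced in the corollaries just before Theorem \ref{thm:cFri}. First, the \emph{iff} is a formality: by the definition of $\mathfrak{M}$ in (\ref{eq:cs6}), the measure $\mu^{(A)}_{u_A}=\|P^{\Delta^{(A)}_{Kr}}(\cdot)u_A\|^2$ is a genuine (nonzero) element of $\mathfrak{M}(\Delta^{(A)}_{Kr})$ exactly when its total mass $\mu^{(A)}_{u_A}(\mathbb{R})=\|u_A\|^2_{\mathscr{H}_A}$ is positive, i.e. when $u_A=j_A^*u\neq0$. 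So from here on I assume $u_A\neq0$.

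For the absolute continuity I would proceed via $W$. Recall from (\ref{eq:ff2-4})--(\ref{eq:ff2-5}) that $W\colon\mathscr{H}_A\to\mathscr{H}_E$ is isometric ($W^*W=I_{\mathscr{H}_A}$) with $j_A=W(j_A^*j_A)^{1/2}$, and from (\ref{eq:ff2-7}) that it intertwines the two Krein extensions, $W\Delta^{(A)}_{Kr}=\Delta_{Kr}W$. An isometric intertwiner of self-adjoint operators intertwines their resolvents, hence their bounded Borel functional calculi, hence their projection-valued measures: $WP^{\Delta^{(A)}_{Kr}}(S)=P^{\Delta_{Kr}}(S)W$ for every Borel $S$. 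Since $W$ is isometric this yields the exact identity of scalar measures
\[
\mu^{\Delta_{Kr}}_{Wu_A}=\mu^{(A)}_{u_A}.
\]
Using $W^*W=I$ one computes $Wu_A=Wj_A^*u=W(j_A^*j_A)^{1/2}W^*u=(j_Aj_A^*)^{1/2}u=:Ru$, where $R:=(j_Aj_A^*)^{1/2}$ is a positive contraction on $\mathscr{H}_E$. Thus the assertion (\ref{eq:cs8}) becomes a comparison of two spectral measures of the \emph{same} operator $\Delta_{Kr}$, namely $\mu_u\ll\mu_{Ru}$.

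To close this I would establish two facts about $R$: that it is \emph{injective}, and that it \emph{commutes with the spectral projections} $P^{\Delta_{Kr}}(S)$. Injectivity is the $\mathscr{H}_E$-side analogue of the kernel computation already run for $j_A$: one has $\mathrm{Ker}(R)=\mathrm{Ker}(j_A^*)=\mathrm{Ran}(j_A)^\perp$, and $\mathrm{Ran}(j_A)$ is dense because it contains every $\delta_x=j_A\delta_x^{(A)}$ by (\ref{eq:jdel}), and the finitely supported functions are dense in $\mathscr{H}_E$ by the definition in Section \ref{sub:EnergyH}. Granting commutation, the finish is one line: if $\mu^{(A)}_{u_A}(S)=\mu_{Ru}(S)=\|P(S)Ru\|^2=\|RP(S)u\|^2=0$, then $RP(S)u=0$, so $P(S)u=0$ by injectivity, i.e. $\mu_u(S)=0$; this is precisely $\mu_u\ll\mu^{(A)}_{u_A}$.

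The hard part is the commutation $[R,\Delta_{Kr}]=0$, equivalently $[\,j_Aj_A^*,\Delta_{Kr}]=0$. This is where the two descriptions of $\Delta_{Kr}$ must be reconciled: the sandwich $\Delta_{Kr}=j_A\Delta^{(A)}_{Kr}j_A^*$ of Theorem \ref{thm:cFri} together with the conjugation $\Delta_{Kr}=W\Delta^{(A)}_{Kr}W^*$ of (\ref{eq:ff2-6}). The delicate point is that both identities hold only on cores/natural domains, so the formal cancellations underlying the commutation must be justified as genuine statements about unbounded self-adjoint operators (and the relation $(j_A^*j_A)^{1/2}\Delta^{(A)}_{Kr}(j_A^*j_A)^{1/2}=\Delta^{(A)}_{Kr}$ it forces is strong enough to demand real care). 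I expect this to be the technical heart. Should a clean commutation prove elusive, the fallback is to obtain (\ref{eq:cs8}) analytically, by comparing the boundary behaviour as $\mathrm{Im}\,z\to0^+$ of the Borel transforms $\langle u,(\Delta_{Kr}-z)^{-1}u\rangle$ and $\langle u_A,(\Delta^{(A)}_{Kr}-z)^{-1}u_A\rangle$, for which the sandwich relation and the contraction estimate $j_A^*j_A\leq I$ supply the control of the positive-measure Poisson integrals that governs the singular and absolutely continuous parts.
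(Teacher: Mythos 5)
Your reduction via the polar isometry $W$ is appealing, and several steps are sound: the ``iff'' really is just the statement $\mu^{(A)}_{u_A}(\mathbb{R})=\|u_A\|^2_{\mathscr{H}_A}$, and the identity $Wu_A=(j_Aj_A^*)^{1/2}u=:Ru$ follows correctly from $j_A=W(j_A^*j_A)^{1/2}$ and $W^*W=I$. But the proof is not complete, and the missing step is not a removable technicality. You reduce (\ref{eq:cs8}) to $\mu_u\ll\mu_{Ru}$ and your closing argument requires $R$ to commute with the spectral projections of $\Delta_{Kr}$; you flag this as ``the technical heart'' and leave it unproved, and in fact it appears to be false in general. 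Combining $\Delta_{Kr}=W\Delta^{(A)}_{Kr}W^*$ with $\Delta_{Kr}=j_A\Delta^{(A)}_{Kr}j_A^*$ and $W^*W=I$ forces $(j_A^*j_A)^{1/2}\Delta^{(A)}_{Kr}(j_A^*j_A)^{1/2}=\Delta^{(A)}_{Kr}$; if $j_A^*j_A$ moreover commuted with $\Delta^{(A)}_{Kr}$, this would give $\Delta^{(A)}_{Kr}\left(I-j_A^*j_A\right)=0$, i.e.\ $j_A$ would have to be isometric on $\left(\ker\Delta^{(A)}_{Kr}\right)^{\perp}$ --- which fails whenever $c<c_A$ on enough edges (in Example \ref{exa:nn1} the operator $j_Aj_A^*$ is even trace class). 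So the commutation route cannot be salvaged as stated, and the analytic ``fallback'' via Borel transforms is only a one-sentence hope, not an argument. A secondary gap: your injectivity of $R$, i.e.\ density of $\mathrm{Ran}(j_A)$ in $\mathscr{H}_E$, is argued from density of $\mathrm{span}\{\delta_x\}$; but the paper stresses that the closed span of $\{\delta_x\}$ is only $\mathrm{Fin}(\mathscr{H}_E)$, whose orthocomplement is the (typically nonzero) space of harmonic functions, so your argument only yields $\ker R\subset\mathrm{Harm}(\mathscr{H}_E)$.

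The paper's own proof needs neither commutation nor injectivity. It works directly with the sandwich identity $j_A\Delta^{(A)}_{Kr}j_A^*=\Delta_{Kr}$ of Theorem \ref{thm:cFri}, uses $j_A^*j_A\leq I$ to derive the moment inequalities $\Delta_{Kr}^{\,n}\leq j_A\left(\Delta^{(A)}_{Kr}\right)^{n}j_A^*$ for all $n$ by induction (eq.\ (\ref{eq:cs9})), and then passes, via Stone--Weierstrass and measurable approximation, to the corresponding inequality for the projection-valued measures, obtaining the pointwise domination $\mu_u(S)\leq\mu^{(A)}_{u_A}(S)$ for every Borel set $S$ (eqs.\ (\ref{eq:cs10})--(\ref{eq:cs11})); absolute continuity is then immediate, with Radon--Nikodym derivative bounded by $1$. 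If you wish to keep your $W$-framework, the viable move is to import a domination of this kind rather than a commutation: isometric intertwining gives $\mu^{(A)}_{u_A}=\mu_{Ru}$ exactly, and what you then need is an inequality $\|P(S)u\|\leq\|P(S)Ru\|$, which must come from an operator inequality, not from $[R,P(S)]=0$.
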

\begin{proof}
By Theorem \ref{thm:cFri} (eq. (\ref{eq:F1})) we have 
\[
j_{A}\Delta_{Kr}^{\left(A\right)}j_{A}^{*}=\Delta_{Kr},
\]
and $j_{A}\Delta_{Kr}^{\left(A\right)}j_{A}^{*}j_{A}\Delta_{Kr}^{\left(A\right)}j_{A}^{*}=\Delta_{Kr}^{2}$;
and so in the natural order of Hermitian operators, we have 
\[
\Delta_{Kr}^{2}\leq j_{A}\left(\Delta_{Kr}^{\left(A\right)}\right)^{2}j_{A}^{*}
\]
since $j_{A}j_{A}^{*}\leq I_{\mathscr{H}_{E}}$ by Lemma \ref{lem:jdipole}.
By induction, we get 
\begin{equation}
\Delta_{Kr}^{n}\leq j_{A}\left(\Delta_{Kr}^{\left(A\right)}\right)^{n}j_{A}^{*},\label{eq:cs9}
\end{equation}
for all $n\in\mathbb{Z}_{+}$, where again we use the order ``$\leq$''
on Hermitian operators in the Hilbert space $\mathscr{H}_{E}$. 

By Stone-Weierstra\textgreek{b} and measurable approximation, we get
the following estimate:
\begin{equation}
\left\Vert P_{\Delta_{Kr}}\left(S\right)u\right\Vert _{\mathscr{H}_{E}}^{2}\leq\left\Vert P_{\Delta_{Kr}^{\left(A\right)}}\left(S\right)j_{A}^{*}u\right\Vert _{\mathscr{H}_{A}}^{2}\label{eq:cs10}
\end{equation}
for all $S\in\mathscr{B}\left(\mathbb{R}\right)$ (= all Borel subsets
of $\mathbb{R}$). Now, introduce the measures from (\ref{eq:cs5})-(\ref{eq:cs6}),
and we get:
\begin{equation}
\mu_{u}^{\left(\Delta_{Kr}\right)}\left(S\right)\leq\mu_{j_{A}^{*}u}^{{\scriptscriptstyle (\Delta_{Kr}^{\left(A\right)})}}\left(S\right)\label{eq:cs11}
\end{equation}
for all $S\in\mathscr{B}\left(\mathbb{R}\right)$; which is the desired
conclusion in the theorem, see (\ref{eq:cs8}). From (\ref{eq:cs11}),
we are then able to verify relative absolute continuity for the respective
measures, and to compute the Radon-Nikodym derivatives: 
\[
d\mu_{u}^{\left(\Delta_{Kr}\right)}\Big/d\mu_{j_{A}^{*}u}^{({\scriptscriptstyle \Delta_{Kr}^{\left(A\right)}})}\in L_{+}^{1}\left(d\mu_{j_{A}^{*}u}^{({\scriptscriptstyle \Delta_{Kr}^{\left(A\right)}})}\right).
\]
\end{proof}
\begin{rem}
Eq (\ref{eq:cs10}) refers to extending information for a given intertwining
operator, intertwining a given pair of selfadjoint operators. We pass
from intertwining of the operators to intertwining of the respective
projection valued measures as follows. There are in four steps; first
from a given intertwining of the operators themselves, pass to the
intertwining property for polynomials in the respective selfadjoint
operators; then passing to the functional calculus for continuous
functions of compact support applied to the operators (Stone-Weierstrass),
and finally to the measurable functional calculus. When the latter
is applied to indicator functions for Borel subsets, we get the respective
projection valued measures corresponding to the given pair of selfadjoint
operators.
\end{rem}

\subsection{Comparing Harmonic Functions, and Dirac Masses}

Let $\left(V,E,c,\Delta,\mathscr{H}_{E}\right)$ be as before. Set
\begin{align*}
\mbox{Fin}\left(\mathscr{H}_{E}\right) & =\mathscr{H}_{E}\mbox{-closed span of }\left\{ \delta_{x}\:\big|\: x\in V\right\} \\
\mbox{Harm}\left(\mathscr{H}_{E}\right) & =\left\{ h\in\mathscr{H}_{E}\:\big|\:\Delta h=0\right\} 
\end{align*}

\begin{lem}
\begin{equation}
\mathscr{H}_{E}=\mbox{Fin}\left(\mathscr{H}_{E}\right)\oplus\mbox{Harm}\left(\mathscr{H}_{E}\right).\label{eq:harm1}
\end{equation}
\end{lem}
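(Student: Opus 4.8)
The plan is to exhibit $\mbox{Harm}\left(\mathscr{H}_{E}\right)$ as the orthogonal complement of the closed subspace $\mbox{Fin}\left(\mathscr{H}_{E}\right)$, so that (\ref{eq:harm1}) reduces to the standard orthogonal-complement decomposition of a Hilbert space. First I would observe that $\mbox{Fin}\left(\mathscr{H}_{E}\right)$ is by definition a closed subspace of $\mathscr{H}_{E}$ (it is the $\mathscr{H}_{E}$-closed span of $\left\{ \delta_{x}\right\}$), so the projection theorem immediately yields the orthogonal splitting $\mathscr{H}_{E}=\mbox{Fin}\left(\mathscr{H}_{E}\right)\oplus\mbox{Fin}\left(\mathscr{H}_{E}\right)^{\perp}$. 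It therefore remains only to identify $\mbox{Fin}\left(\mathscr{H}_{E}\right)^{\perp}$ with $\mbox{Harm}\left(\mathscr{H}_{E}\right)$.

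The key tool is Lemma \ref{lem:Delta} (\ref{enu:D5}), the reproducing identity $\left\langle \delta_{x},u\right\rangle _{\mathscr{H}_{E}}=\left(\Delta u\right)\left(x\right)$, valid for all $x\in V$ and all $u\in\mathscr{H}_{E}$. A vector $u$ lies in $\mbox{Fin}\left(\mathscr{H}_{E}\right)^{\perp}$ exactly when $\left\langle \delta_{x},u\right\rangle _{\mathscr{H}_{E}}=0$ for every $x$ (orthogonality to a spanning set is equivalent to orthogonality to the closure of its span). By the reproducing identity this is equivalent to $\left(\Delta u\right)\left(x\right)=0$ for all $x$, i.e.\ $\Delta u=0$, which is precisely the defining condition for membership in $\mbox{Harm}\left(\mathscr{H}_{E}\right)$. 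Hence $\mbox{Fin}\left(\mathscr{H}_{E}\right)^{\perp}=\mbox{Harm}\left(\mathscr{H}_{E}\right)$, and (\ref{eq:harm1}) follows.

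The only point requiring a little care, and the closest thing to an obstacle, is that elements of $\mathscr{H}_{E}$ are equivalence classes of functions on $V$, so one must ensure the condition ``$\Delta u=0$'' is well defined on $\mathscr{H}_{E}$ rather than merely on representatives. But this is exactly what Lemma \ref{lem:Delta} (\ref{enu:D5}) guarantees: the pointwise values $\left(\Delta u\right)\left(x\right)=\left\langle \delta_{x},u\right\rangle _{\mathscr{H}_{E}}$ depend only on the class of $u$ in $\mathscr{H}_{E}$. Consequently $\mbox{Harm}\left(\mathscr{H}_{E}\right)$ is a genuine subspace and, being an orthogonal complement, is automatically closed; no separate closedness verification is needed, and the decomposition is orthogonal as asserted.
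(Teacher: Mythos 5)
Your proof is correct and follows exactly the route the paper takes: the paper's proof is a one-line ``immediate from'' the reproducing identity $\left\langle \delta_{x},f\right\rangle _{\mathscr{H}_{E}}=\left(\Delta f\right)\left(x\right)$ of Lemma \ref{lem:Delta} (\ref{enu:D5}), and you have simply spelled out the intermediate steps (projection theorem plus identification of $\mbox{Fin}\left(\mathscr{H}_{E}\right)^{\perp}$ with $\mbox{Harm}\left(\mathscr{H}_{E}\right)$). Your added remark on well-definedness of $\Delta u=0$ on equivalence classes is a sensible extra precaution but not a departure from the paper's argument.
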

\begin{proof}
Immediate from the fact 
\begin{equation}
\left\langle \delta_{x},f\right\rangle _{\mathscr{H}_{E}}=\left(\Delta f\right)\left(x\right)=\sum_{y\sim x}c_{xy}\left(f\left(x\right)-f\left(y\right)\right),\label{eq:harm2}
\end{equation}
see Lemma \ref{lem:Delta} (\ref{enu:D5}).
\end{proof}
Let $c_{A}$ be a second conductance function, and assume $c_{A}\geq c$;
set $j_{A}:\mathscr{H}_{A}\rightarrow\mathscr{H}_{C}$, and $j_{A}^{*}:\mathscr{H}_{C}\rightarrow\mathscr{H}_{A}$
the adjoint, i.e., 
\[
\left\langle j_{A}^{*}\left(w\right),u\right\rangle _{\mathscr{H}_{A}}=\left\langle w,u\right\rangle _{\mathscr{H_{C}}}
\]
holds for all $u\in\mathscr{H}_{A}$. 
\begin{cor}
\begin{equation}
j_{A}^{*}\left(\mbox{Harm}\left(\mathscr{H}_{C}\right)\right)\subset\mbox{Harm}\left(\mathscr{H}_{A}\right).\label{eq:harm4}
\end{equation}
\end{cor}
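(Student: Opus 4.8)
The plan is to reduce harmonicity to an orthogonality condition and then exploit the single identity $j_{A}\delta_{x}^{\left(A\right)}=\delta_{x}$ supplied by Lemma~\ref{lem:jdipole}. By the decomposition (\ref{eq:harm1}), a vector $g\in\mathscr{H}_{A}$ lies in $\mbox{Harm}\left(\mathscr{H}_{A}\right)$ exactly when it is orthogonal in $\mathscr{H}_{A}$ to every $\delta_{x}^{\left(A\right)}$, $x\in V$; equivalently, by Lemma~\ref{lem:Delta}~(\ref{enu:D5}) applied in $\mathscr{H}_{A}$, when $\langle\delta_{x}^{\left(A\right)},g\rangle_{\mathscr{H}_{A}}=\left(\Delta_{A}g\right)\left(x\right)=0$ for all $x$. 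So I would fix an arbitrary $h\in\mbox{Harm}\left(\mathscr{H}_{C}\right)$, put $g:=j_{A}^{*}h$, and verify $\langle\delta_{x}^{\left(A\right)},g\rangle_{\mathscr{H}_{A}}=0$ for every $x\in V$.

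The verification is a short chain of identities. Moving $j_{A}^{*}$ across the adjoint pairing gives $\langle\delta_{x}^{\left(A\right)},j_{A}^{*}h\rangle_{\mathscr{H}_{A}}=\langle j_{A}\delta_{x}^{\left(A\right)},h\rangle_{\mathscr{H}_{C}}$, and then (\ref{eq:jdel}) collapses $j_{A}\delta_{x}^{\left(A\right)}$ to $\delta_{x}$, so the pairing equals $\langle\delta_{x},h\rangle_{\mathscr{H}_{C}}=\left(\Delta h\right)\left(x\right)$ by Lemma~\ref{lem:Delta}~(\ref{enu:D5}) in $\mathscr{H}_{C}=\mathscr{H}_{E}$. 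Since $h$ is harmonic we have $\left(\Delta h\right)\left(x\right)=0$, whence $g=j_{A}^{*}h$ is orthogonal to all $\delta_{x}^{\left(A\right)}$, i.e.\ $j_{A}^{*}h\in\mbox{Harm}\left(\mathscr{H}_{A}\right)$, which is the asserted inclusion (\ref{eq:harm4}).

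The content of the corollary is thus essentially the commutation $j_{A}\delta_{x}^{\left(A\right)}=\delta_{x}$, and there is no real obstacle once that identity is available. The one point that deserves care is that I never need $j_{A}^{*}h$ to lie in any a~priori domain of $\Delta_{A}$: routing the argument through the orthogonal-complement characterization (\ref{eq:harm1}) of $\mbox{Harm}\left(\mathscr{H}_{A}\right)$ sidesteps domain issues entirely, since harmonicity is tested purely by inner products against the $\delta_{x}^{\left(A\right)}$. For completeness I would note the dual route: the intertwining $j_{A}\Delta_{A}j_{A}^{*}=\Delta$ from Lemma~\ref{lem:jdipole}~(\ref{eq:jlap}) yields the same conclusion, but the direct orthogonality computation above is the shortest.
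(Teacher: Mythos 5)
Your proposal is correct and follows essentially the same route as the paper: both arguments run the chain $\langle\delta_{x}^{(A)},j_{A}^{*}h\rangle_{\mathscr{H}_{A}}=\langle j_{A}\delta_{x}^{(A)},h\rangle_{\mathscr{H}_{C}}=\langle\delta_{x},h\rangle_{\mathscr{H}_{C}}=(\Delta h)(x)=0$ using (\ref{eq:jdel}) and Lemma \ref{lem:Delta}~(\ref{enu:D5}). Your framing via orthogonality to the $\delta_{x}^{(A)}$ is just a slightly more careful phrasing of the paper's identical computation.
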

\begin{proof}
Let $h\in\mbox{Harm}\left(\mathscr{H}_{C}\right)$, and let $x\in V$,
then 
\begin{align*}
\Delta_{A}\left(j_{A}^{*}\left(h\right)\right)\left(x\right) & =\left\langle j_{A}^{*}\left(h\right),\delta_{x}^{A}\right\rangle _{\mathscr{H}_{A}}\\
 & =\left\langle h,j_{A}\left(\delta_{x}^{A}\right)\right\rangle _{\mathscr{H}_{C}}\\
 & =\left\langle h,\delta_{x}\right\rangle _{\mathscr{H}_{C}}=\left(\Delta_{c}h\right)\left(x\right)=0;
\end{align*}
so $j_{A}^{*}\left(h\right)\in\mbox{Harm}\left(\mathscr{H}_{A}\right)$
as claimed.
\end{proof}
For the action of $j_{A}^{*}$ on the point masses $\delta_{x}$,
we then have the following:
\begin{thm}
\[
j_{A}^{*}\left(\delta_{x}\right)-\delta_{x}^{A}=\left(c\left(x\right)-c^{\left(A\right)}\left(x\right)\right)v_{x}^{\left(A\right)}-\sum_{y\sim x}\left(c_{xy}-c_{xy}^{\left(A\right)}\right)v_{xy}^{\left(A\right)},
\]
where $v_{x}$ and $v_{x}^{\left(A\right)}$ are the respective dipoles,
i.e., with a fixed base point $o\in V$, $v_{x}:=v_{xo}$, $v_{x}^{\left(A\right)}:=v_{xo}^{\left(A\right)}$,
for $x\in V':=V\backslash\left\{ o\right\} $. \end{thm}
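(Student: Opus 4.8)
The plan is to expand both vectors $j_{A}^{*}(\delta_{x})$ and $\delta_{x}^{A}$ in the dipole system of the \emph{finer} energy space $\mathscr{H}_{A}$ and then subtract. Since each vertex $x$ has finite degree (property (2) of Section~\ref{sub:setting}), every sum over $y\sim x$ is finite, so no convergence question arises and all manipulations are purely algebraic in $\mathscr{H}_{A}$.

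First I would record the two ingredient expansions. For $j_{A}^{*}(\delta_{x})$ I invoke Lemma~\ref{lem:jdipole}, equation~(\ref{eq:jdelta1}), which already gives $j_{A}^{*}(\delta_{x})=\sum_{y\sim x}c_{xy}\,v_{xy}^{(A)}$. Second, I compute the point mass $\delta_{x}^{A}$ \emph{intrinsically} in $\mathscr{H}_{A}$: it is the Riesz representative of $u\mapsto(\Delta_{A}u)(x)$ for the $\mathscr{H}_{A}$-inner product, i.e.\ the analogue of Lemma~\ref{lem:Delta}(\ref{enu:D5}) computed with the conductance $c^{(A)}$. Testing against an arbitrary $u\in\mathscr{H}_{A}$ gives $\langle\delta_{x}^{A},u\rangle_{A}=(\Delta_{A}u)(x)=\sum_{y\sim x}c_{xy}^{(A)}\bigl(u(x)-u(y)\bigr)=\bigl\langle\sum_{y\sim x}c_{xy}^{(A)}v_{xy}^{(A)},u\bigr\rangle_{A}$, whence the vector identity $\delta_{x}^{A}=\sum_{y\sim x}c_{xy}^{(A)}\,v_{xy}^{(A)}$ (this is exactly the analogue of Lemma~\ref{lem:Delta}(\ref{enu:D7}) inside $\mathscr{H}_{A}$). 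Subtracting the two displays immediately yields the clean dipole-difference form \[ j_{A}^{*}(\delta_{x})-\delta_{x}^{A}=\sum_{y\sim x}\bigl(c_{xy}-c_{xy}^{(A)}\bigr)v_{xy}^{(A)}. \]

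It then remains to pass to the stated presentation. Using the elementary splitting $v_{xy}^{(A)}=v_{x}^{(A)}-v_{y}^{(A)}$ for $x\in V'$ (immediate from Lemma~\ref{lem:dipole} and $v_{x}^{(A)}:=v_{xo}^{(A)}$, cf.\ (\ref{eq:jdp})) and the defining relations $c(x)=\sum_{y\sim x}c_{xy}$, $c^{(A)}(x)=\sum_{y\sim x}c_{xy}^{(A)}$, the coefficient of $v_{x}^{(A)}$ collects to $c(x)-c^{(A)}(x)$, which produces the asserted right-hand side. The step requiring the most care is the second one: one must use the \emph{intrinsic} objects of $\mathscr{H}_{A}$ — the Laplacian $\Delta_{A}$ built from $c^{(A)}$ and the $\mathscr{H}_{A}$-inner product — rather than the ambient $\mathscr{H}_{C}$ structure, and one must note the identity for $\delta_{x}^{A}$ holds as a genuine equality of vectors in $\mathscr{H}_{A}$ precisely because it is verified against \emph{every} $u\in\mathscr{H}_{A}$. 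The remaining obstacle is purely bookkeeping: keeping the two equivalent presentations in line via $v_{xy}^{(A)}=v_{x}^{(A)}-v_{y}^{(A)}$ and tracking the signs when the $v_{x}^{(A)}$ term is separated out (and restricting to $x\in V'$ so that $v_{x}^{(A)}$ is defined).
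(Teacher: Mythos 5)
Your argument is correct and is, at bottom, the same computation as the paper's: both proofs expand $j_{A}^{*}(\delta_{x})$ and $\delta_{x}^{A}$ in dipoles of $\mathscr{H}_{A}$ and subtract. The paper starts from Lemma \ref{lem:Delta}(\ref{enu:D7}), $\delta_{x}=c(x)v_{x}-\sum_{y\sim x}c_{xy}v_{y}$, and pushes it through $j_{A}^{*}$ using $j_{A}^{*}v_{y}=v_{y}^{(A)}$, working throughout with the base-point dipoles $v_{y}^{(A)}$; you instead start from (\ref{eq:jdelta1}) and the intrinsic identity $\delta_{x}^{A}=\sum_{y\sim x}c_{xy}^{(A)}v_{xy}^{(A)}$, i.e.\ you work with the edge dipoles $v_{xy}^{(A)}$ and convert afterwards via $v_{xy}^{(A)}=v_{x}^{(A)}-v_{y}^{(A)}$. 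The two are equivalent bookkeeping, and your justification of $\delta_{x}^{A}=\sum_{y\sim x}c_{xy}^{(A)}v_{xy}^{(A)}$ by testing against every $u\in\mathscr{H}_{A}$ is sound.

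One point you should not gloss over: your (correct) computation, and the paper's, actually establish
\begin{equation*}
j_{A}^{*}(\delta_{x})-\delta_{x}^{A}
=\sum_{y\sim x}\bigl(c_{xy}-c_{xy}^{(A)}\bigr)v_{xy}^{(A)}
=\bigl(c(x)-c^{(A)}(x)\bigr)v_{x}^{(A)}-\sum_{y\sim x}\bigl(c_{xy}-c_{xy}^{(A)}\bigr)v_{y}^{(A)},
\end{equation*}
whereas the display in the theorem keeps the separate $\bigl(c(x)-c^{(A)}(x)\bigr)v_{x}^{(A)}$ term \emph{and} writes $v_{xy}^{(A)}$ (rather than $v_{y}^{(A)}$) inside the sum; after expanding $v_{xy}^{(A)}=v_{x}^{(A)}-v_{y}^{(A)}$ that right-hand side collapses to $\sum_{y\sim x}(c_{xy}-c_{xy}^{(A)})v_{y}^{(A)}$, which is not the left-hand side in general. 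So the printed statement carries a typo (either drop the first term or replace $v_{xy}^{(A)}$ by $v_{y}^{(A)}$ in the sum), and your closing sentence, claiming the expansion ``produces the asserted right-hand side,'' is not literally accurate: it produces the corrected formula, exactly as the paper's own subtraction does.
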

\begin{proof}
We have $\delta_{x}=c\left(x\right)v_{x}-\sum_{y\sim x}c_{xy}v_{y}$
and therefore
\begin{eqnarray*}
j_{A}^{*}\left(\delta_{x}\right) & = & c\left(x\right)j_{A}^{*}\left(v_{x}\right)-\sum_{y\sim x}c_{xy}j_{A}^{*}\left(v_{y}\right)\\
 & = & c\left(x\right)v_{x}^{\left(A\right)}-\sum_{y\sim x}c_{xy}v_{y}^{\left(A\right)};
\end{eqnarray*}
and the desired formula now follows by a subtract, and 
\[
\delta_{x}^{A}=c^{\left(A\right)}\left(x\right)v_{x}^{\left(A\right)}-\sum_{y\sim x}c_{xy}^{\left(A\right)}v_{y}^{\left(A\right)}.
\]

\end{proof}
Below, we contrast our use of the Krein extensions (above) with the
corresponding Friedrichs extensions and their respective quadratic
forms.

Let $G=\left(V,E\right)$ be an infinite \emph{connected} network
as before. Fix two conductance functions $c,c^{\left(A\right)}:E\rightarrow\mathbb{R}_{+}$,
and assume $c_{xy}^{\left(A\right)}\geq c_{xy}$ for all $\left(xy\right)\in E$. 

\uline{Settings:}
\begin{itemize}[itemsep=1em]
\item $\mathscr{H}_{A},\mathscr{H}_{C}$ - the energy spaces with respect
to $c^{\left(A\right)}$ and $c$; i.e., the completion of span of
dipoles w.r.t. the corresponding energy inner products.
\item $j_{A}:\mathscr{H}_{A}\longrightarrow\mathscr{H}_{C}$ the natural
inclusion in Definition \ref{def:j}.
\item $v_{xy}$, $v_{xy}^{\left(A\right)}$ - dipoles in the respective
energy spaces.
\item $\delta_{x}$, $\delta_{x}^{\left(A\right)}$ - as defined in (\ref{eq:delx}).
\item $\Delta\left(=\Delta_{c}\right)$, $\Delta_{A}$ - graph Laplacians
in the respective energy spaces.
\end{itemize}
Recall that 
\[
j_{A}:\mathscr{H}_{A}\longrightarrow\mathscr{H}_{C}
\]
is a continuous injection, and 
\[
\left\Vert u\right\Vert _{C}\leq\left\Vert u\right\Vert _{A},\;\forall u\in\mathscr{H}_{A}.
\]
The operator $j_{A}j_{A}^{*}:\mathscr{H}_{C}\rightarrow\mathscr{H}_{C}$
is positive, selfadjoint, $\left\Vert j_{A}j_{A}^{*}\right\Vert _{C}\leq1$.

Let $\Delta_{A},\Delta$ be the graph-Laplacians, where 
\begin{align*}
dom(\Delta_{A}) & =span\{v_{xy}^{\left(A\right)}\}\\
dom(\Delta) & =span\{v_{xy}\}
\end{align*}
Set 
\[
M_{A}:=I_{\mathscr{H}_{A}}+\Delta_{A},\quad M:=I_{\mathscr{H}_{C}}+\Delta
\]
and let 
\begin{align*}
\mathscr{H}_{M_{A}} & =\mbox{completion of }dom(\Delta_{A})\mbox{ w.r.t }\left\Vert \varphi\right\Vert _{M_{A}}^{2}:=\left\langle \varphi,M_{A}\varphi\right\rangle _{A}\\
\mathscr{H}_{M} & =\mbox{completion of }dom(\Delta)\mbox{ w.r.t }\left\Vert \varphi\right\Vert _{M}^{2}:=\left\langle \varphi,M\varphi\right\rangle _{C}.
\end{align*}
By Lemma \ref{lem:FriedDomain}, and the assumption $c^{\left(A\right)}\geq c$,
we have 

\begin{align*}
\left\Vert u\right\Vert _{M_{A}} & \geq\left\Vert u\right\Vert _{M}\geq\left\Vert u\right\Vert _{C}\\
\left\Vert u\right\Vert _{M_{A}} & \geq\left\Vert u\right\Vert _{A}\geq\left\Vert u\right\Vert _{C},\;\forall u\in\mathscr{H}_{M_{A}}.
\end{align*}
Consequently, the inclusions in the diagram below are all contractive:
\[
\xymatrix{\mathscr{H}_{A}\ar[r]^{j_{A}} & \mathscr{H}_{C}\\
\mathscr{H}_{M_{A}}\ar[u]^{j_{1}}\ar[r] & \mathscr{H}_{M}\ar[u]_{j_{2}}
}
\]

Let $\widetilde{M}_{A}\supset M_{A}$, $\widetilde{M}\supset M$ be
the respective Krein extensions; i.e., 
\begin{align*}
\widetilde{M}_{A} & =I_{\mathscr{H}_{A}}+\Delta_{Kre}^{\left(A\right)}\\
\widetilde{M} & =I_{\mathscr{H}_{C}}+\Delta_{Kre}.
\end{align*}

\begin{lem}
The following hold.
\begin{enumerate}
\item \label{enu:Fc1}$dom(\widetilde{M})=j_{2}^{*}(\mathscr{H}_{C})$,
$\mathscr{H}_{M}=dom(\widetilde{M}^{1/2})$;
\item For all $u\in dom(\widetilde{M})$, $v\in\mathscr{H}_{M}$, 
\begin{equation}
\left\langle u,v\right\rangle _{M}=\langle\widetilde{M}u,v\rangle_{C};\label{eq:Fc}
\end{equation}

\end{enumerate}

and
\begin{enumerate}[resume]
\item \label{enu:Fa1}$dom(\widetilde{M}_{A})=j_{1}^{*}(\mathscr{H}_{A})$,
$\mathscr{H}_{M_{A}}=dom(\widetilde{M}_{A}^{1/2})$;\end{enumerate}
\begin{enumerate}
\item 
\begin{eqnarray*}
j_{A}\widetilde{M}_{A}j_{A}^{*} & = & j_{A}j_{A}^{*}+\Delta_{Fri}\\
 & = & \widetilde{M}+\left(j_{A}j_{A}^{*}-I_{\mathscr{H}_{C}}\right).
\end{eqnarray*}

\end{enumerate}
\end{lem}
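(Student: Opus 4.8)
The plan is to treat parts (1)--(3) as an instance of the standard correspondence between closed semibounded quadratic forms and selfadjoint operators, and to deduce part (4) from the intertwining identity already established in Theorem \ref{thm:cFri}. Throughout, $M=I_{\mathscr{H}_{C}}+\Delta$ is bounded below by $I$ on $dom(\Delta)=span\{v_{xy}\}$, so the sesquilinear form $\langle\varphi,\psi\rangle_{M}:=\langle\varphi,M\psi\rangle_{C}$ is positive and closable; its closure is the inner product of the completion $\mathscr{H}_{M}$, and the selfadjoint operator canonically associated to this closed form is the extension $\widetilde{M}$. The same discussion applies verbatim with all data replaced by their $A$-versions, which yields part (3).

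For part (1) I would unfold the adjoint $j_{2}^{*}\colon\mathscr{H}_{C}\to\mathscr{H}_{M}$: for $w\in\mathscr{H}_{C}$ and $v\in\mathscr{H}_{M}$ one has $\langle j_{2}^{*}w,v\rangle_{M}=\langle w,j_{2}v\rangle_{C}=\langle w,v\rangle_{C}$. Comparing this with the defining relation of the form operator --- namely $u\in dom(\widetilde{M})$ with $\widetilde{M}u=w$ iff $u\in\mathscr{H}_{M}$ and $\langle u,v\rangle_{M}=\langle w,v\rangle_{C}$ for all $v\in\mathscr{H}_{M}$ --- shows simultaneously that $j_{2}^{*}w\in dom(\widetilde{M})$ with $\widetilde{M}j_{2}^{*}w=w$, and conversely that every $u\in dom(\widetilde{M})$ equals $j_{2}^{*}(\widetilde{M}u)$. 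Hence $dom(\widetilde{M})=j_{2}^{*}(\mathscr{H}_{C})$. This very computation, read with $w=\widetilde{M}u$, is the identity in part (2); and the equality $\mathscr{H}_{M}=dom(\widetilde{M}^{1/2})$ is Kato's second representation theorem for the closed form, using that $\mathscr{H}_{M}$ is by construction the form domain. Part (3) is identical after replacing $(\mathscr{H}_{C},j_{2},\widetilde{M})$ by $(\mathscr{H}_{A},j_{1},\widetilde{M}_{A})$.

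For part (4) I would distribute the bounded maps $j_{A}$ and $j_{A}^{*}$ across the additive splitting $\widetilde{M}_{A}=I_{\mathscr{H}_{A}}+\Delta_{Kre}^{(A)}$:
\[
j_{A}\widetilde{M}_{A}j_{A}^{*}=j_{A}\,I_{\mathscr{H}_{A}}\,j_{A}^{*}+j_{A}\Delta_{Kre}^{(A)}j_{A}^{*}=j_{A}j_{A}^{*}+\Delta_{Kre},
\]
where the first summand is $j_{A}j_{A}^{*}$ and the second is evaluated by the intertwining relation $j_{A}\Delta_{Kre}^{(A)}j_{A}^{*}=\Delta_{Kre}$ of Theorem \ref{thm:cFri} (eq. (\ref{eq:F1})). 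Writing $\Delta_{Fri}$ for this common Laplacian extension gives the first displayed equality, and substituting $\Delta_{Kre}=\widetilde{M}-I_{\mathscr{H}_{C}}$ gives the second, $j_{A}j_{A}^{*}+\Delta_{Kre}=\widetilde{M}+(j_{A}j_{A}^{*}-I_{\mathscr{H}_{C}})$. Since $j_{A}j_{A}^{*}-I_{\mathscr{H}_{C}}$ is bounded and selfadjoint, both sides share the domain $dom(\widetilde{M})$, so the identity is a genuine operator equality rather than merely a formal one.

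The delicate points are confined to parts (1)--(3). First, one must verify that the energy form $\langle\cdot,\cdot\rangle_{M}$ is closable and that $j_{2}$ (resp. $j_{1}$) is injective, so that $\mathscr{H}_{M}$ is faithfully realized as the form domain inside $\mathscr{H}_{C}$ and the abstract completion may be identified with $j_{2}(\mathscr{H}_{M})\subset\mathscr{H}_{C}$; only modulo this identification are the statements $dom(\widetilde{M})=j_{2}^{*}(\mathscr{H}_{C})$ and $\mathscr{H}_{M}=dom(\widetilde{M}^{1/2})$ meaningful. Second, and this is the genuine crux, one must confirm that the selfadjoint operator singled out earlier --- namely $\widetilde{M}=I_{\mathscr{H}_{C}}+\Delta_{Kre}$ produced by the factorization of Theorem \ref{thm:Deltaf} --- is precisely the operator canonically associated to the closed form $\mathscr{H}_{M}$; this is what makes the representation theorem applicable, and it is where Lemma \ref{lem:FriedDomain} (the computation of $\langle\varphi,\Delta\varphi\rangle_{\mathscr{H}_{E}}$) enters. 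Once this identification is secured, parts (1)--(3) are a direct reading of the representation theorem, and part (4) follows from Theorem \ref{thm:cFri} with no further obstacle.
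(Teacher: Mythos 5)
Your proposal is correct and is essentially the paper's own route: the paper's proof of this lemma consists entirely of a pointer to the standard references on the representation theory of closed semibounded quadratic forms, and your argument simply spells out that representation theorem (the identification $dom(\widetilde{M})=j_{2}^{*}(\mathscr{H}_{C})$ via the defining relation $\langle u,v\rangle_{M}=\langle\widetilde{M}u,v\rangle_{C}$, Kato's second representation theorem for $dom(\widetilde{M}^{1/2})$, and Theorem \ref{thm:cFri} for part (4)). The one caveat you flag as the ``genuine crux'' --- that the operator canonically associated to the closure of the form on $dom(\Delta)$ is a priori the Friedrichs extension, so one must justify writing $\widetilde{M}=I_{\mathscr{H}_{C}}+\Delta_{Kre}$ rather than the Friedrichs version --- is a real point, but it is one the paper itself leaves unaddressed (note its own wavering between $\Delta_{Kre}$ and $\Delta_{Fri}$ in the statement), so your treatment is no less complete than the original.
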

\begin{proof}
See e.g., \cite{RS75,DS88b}.
\end{proof}

\begin{acknowledgement*}
The co-authors thank the following colleagues for helpful and enlightening
discussions: Professors Sergii Bezuglyi, Paul Muhly, Myung-Sin Song,
Wayne Polyzou, Gestur Olafsson, Keri Kornelson, and members in the
Math Physics seminar at the University of Iowa. We are grateful to
an anonymous referee for very helpful clarifications and improvements.
\end{acknowledgement*}
\bibliographystyle{amsalpha}
\bibliography{number8}

\newcommand{\etalchar}[1]{$^{#1}$}
\providecommand{\bysame}{\leavevmode\hbox to3em{\hrulefill}\thinspace}
\providecommand{\MR}{\relax\ifhmode\unskip\space\fi MR }
% \MRhref is called by the amsart/book/proc definition of \MR.
\providecommand{\MRhref}[2]{%
  \href{http://www.ams.org/mathscinet-getitem?mr=#1}{#2}
}
\providecommand{\href}[2]{#2}
\begin{thebibliography}{RAKK05}

\bibitem[AD98]{MR1618628}
Damir~Z. Arov and Harry Dym, \emph{On three {K}rein extension problems and some
  generalizations}, Integral Equations Operator Theory \textbf{31} (1998),
  no.~1, 1--91. \MR{1618628 (99f:47019)}

\bibitem[AG93]{AG93}
N.~I. Akhiezer and I.~M. Glazman, \emph{Theory of linear operators in {H}ilbert
  space}, Dover Publications Inc., New York, 1993, Translated from the Russian
  and with a preface by Merlynd Nestell, Reprint of the 1961 and 1963
  translations, Two volumes bound as one. \MR{1255973 (94i:47001)}

\bibitem[Ana11]{Ana11}
Victor Anandam, \emph{Harmonic functions and potentials on finite or infinite
  networks}, Lecture Notes of the Unione Matematica Italiana, vol.~12,
  Springer, Heidelberg; UMI, Bologna, 2011. \MR{2816628 (2012h:31001)}

\bibitem[Bar93]{Bar93}
M.~T. Barlow, \emph{Random walks, electrical resistance, and nested fractals},
  Asymptotic problems in probability theory: stochastic models and diffusions
  on fractals ({S}anda/{K}yoto, 1990), Pitman Res. Notes Math. Ser., vol. 283,
  Longman Sci. Tech., Harlow, 1993, pp.~131--157. \MR{1354153 (96i:60082)}

\bibitem[BC05]{BC05}
R.~Burioni and D.~Cassi, \emph{Random walks on graphs: ideas, techniques and
  results}, J. Phys. A \textbf{38} (2005), no.~8, R45--R78. \MR{2119174
  (2006b:82059)}

\bibitem[BCD06]{BCD06}
Lali Barri{\`e}re, Francesc Comellas, and Cristina Dalf{\'o}, \emph{Fractality
  and the small-world effect in {S}ierpinski graphs}, J. Phys. A \textbf{39}
  (2006), no.~38, 11739--11753. \MR{2275879 (2008i:28008)}

\bibitem[CH08]{CH08}
D.~A. Croydon and B.~M. Hambly, \emph{Local limit theorems for sequences of
  simple random walks on graphs}, Potential Anal. \textbf{29} (2008), no.~4,
  351--389. \MR{2453564 (2010c:60218)}

\bibitem[CM13]{CM13}
B.~Currey and A.~Mayeli, \emph{The {O}rthonormal {D}ilation {P}roperty for
  {A}bstract {P}arseval {W}avelet {F}rames}, Canad. Math. Bull. \textbf{56}
  (2013), no.~4, 729--736. \MR{3121682}

\bibitem[DS88]{DS88b}
Nelson Dunford and Jacob~T. Schwartz, \emph{Linear operators. {P}art {II}},
  Wiley Classics Library, John Wiley \& Sons Inc., New York, 1988, Spectral
  theory. Selfadjoint operators in Hilbert space, With the assistance of
  William G. Bade and Robert G. Bartle, Reprint of the 1963 original, A
  Wiley-Interscience Publication. \MR{1009163 (90g:47001b)}

\bibitem[EO13]{EO13}
Martin Ehler and Kasso~A. Okoudjou, \emph{Probabilistic frames: an overview},
  Finite frames, Appl. Numer. Harmon. Anal., Birkh\"auser/Springer, New York,
  2013, pp.~415--436. \MR{2964017}

\bibitem[GLS12]{GLS12}
Martin~J. Gander, S{\'e}bastien Loisel, and Daniel~B. Szyld, \emph{An optimal
  block iterative method and preconditioner for banded matrices with
  applications to {PDE}s on irregular domains}, SIAM J. Matrix Anal. Appl.
  \textbf{33} (2012), no.~2, 653--680. \MR{2970224}

\bibitem[Gri10]{Gri10}
Geoffrey Grimmett, \emph{Probability on graphs}, Institute of Mathematical
  Statistics Textbooks, vol.~1, Cambridge University Press, Cambridge, 2010,
  Random processes on graphs and lattices. \MR{2723356 (2011k:60322)}

\bibitem[Har56]{MR0087060}
Philip Hartman, \emph{Perturbation of spectra and {K}rein extensions}, Rend.
  Circ. Mat. Palermo (2) \textbf{5} (1956), 341--354 (1957). \MR{0087060
  (19,296c)}

\bibitem[HJL{\etalchar{+}}13]{HJL13}
Deguang Han, Wu~Jing, David Larson, Pengtong Li, and Ram~N. Mohapatra,
  \emph{Dilation of dual frame pairs in {H}ilbert {$C^*$}-modules}, Results
  Math. \textbf{63} (2013), no.~1-2, 241--250. \MR{3009685}

\bibitem[HMdS04]{MR2092324}
Seppo Hassi, Mark Malamud, and Henk de~Snoo, \emph{On {K}re\u\i n's extension
  theory of nonnegative operators}, Math. Nachr. \textbf{274/275} (2004),
  40--73. \MR{2092324 (2005f:47061)}

\bibitem[Jor08]{Jor08}
Palle E.~T. Jorgensen, \emph{Essential self-adjointness of the
  graph-{L}aplacian}, J. Math. Phys. \textbf{49} (2008), no.~7, 073510, 33.
  \MR{2432048 (2009k:47099)}

\bibitem[JP10]{JoPe10}
Palle E.~T. Jorgensen and Erin Peter~James Pearse, \emph{A {H}ilbert space
  approach to effective resistance metric}, Complex Anal. Oper. Theory
  \textbf{4} (2010), no.~4, 975--1013. \MR{2735315 (2011j:05338)}

\bibitem[JP11a]{JoPe11a}
Palle E.~T. Jorgensen and Erin P.~J. Pearse, \emph{Resistance boundaries of
  infinite networks}, Random walks, boundaries and spectra, Progr. Probab.,
  vol.~64, Birkh\"auser/Springer Basel AG, Basel, 2011, pp.~111--142.
  \MR{3051696}

\bibitem[JP11b]{JoPe11b}
\bysame, \emph{Spectral reciprocity and matrix representations of unbounded
  operators}, J. Funct. Anal. \textbf{261} (2011), no.~3, 749--776.
  \MR{2799579}

\bibitem[JT14]{JT14}
Palle Jorgensen and Feng Tian, \emph{Frames and factorization of graph
  laplacians}, ArXiv e-prints (2014).

\bibitem[KL12]{MR2920886}
Matthias Keller and Daniel Lenz, \emph{Dirichlet forms and stochastic
  completeness of graphs and subgraphs}, J. Reine Angew. Math. \textbf{666}
  (2012), 189--223. \MR{2920886}

\bibitem[KLZ09]{KLZ09}
Victor Kaftal, David~R. Larson, and Shuang Zhang, \emph{Operator-valued
  frames}, Trans. Amer. Math. Soc. \textbf{361} (2009), no.~12, 6349--6385.
  \MR{2538596 (2010h:42060)}

\bibitem[KMRS05]{KMRS05}
Norio Konno, Naoki Masuda, Rahul Roy, and Anish Sarkar, \emph{Rigorous results
  on the threshold network model}, J. Phys. A \textbf{38} (2005), no.~28,
  6277--6291. \MR{2166622 (2006j:82054)}

\bibitem[KOPT13]{KOPT13}
Gitta Kutyniok, Kasso~A. Okoudjou, Friedrich Philipp, and Elizabeth~K. Tuley,
  \emph{Scalable frames}, Linear Algebra Appl. \textbf{438} (2013), no.~5,
  2225--2238. \MR{3005286}

\bibitem[RAKK05]{RAKK05}
G.~J. Rodgers, K.~Austin, B.~Kahng, and D.~Kim, \emph{Eigenvalue spectra of
  complex networks}, J. Phys. A \textbf{38} (2005), no.~43, 9431--9437.
  \MR{2187996 (2006j:05186)}

\bibitem[RS75]{RS75}
M.~Reed and B.~Simon, \emph{Methods of modern mathematical physics: Fourier
  analysis, self-adjointness}, Fourier analysis, Self-adjointness, no. v. 2,
  Academic Press, 1975.

\bibitem[SD13]{SD13}
F.~A. Shah and Lokenath Debnath, \emph{Tight wavelet frames on local fields},
  Analysis (Berlin) \textbf{33} (2013), no.~3, 293--307. \MR{3118429}

\bibitem[TD03]{TD03}
Christophe Texier and Pascal Degiovanni, \emph{Charge and current distribution
  in graphs}, J. Phys. A \textbf{36} (2003), no.~50, 12425--12452. \MR{2025876
  (2004i:81264)}

\bibitem[Tet91]{Tet91}
Prasad Tetali, \emph{Random walks and the effective resistance of networks}, J.
  Theoret. Probab. \textbf{4} (1991), no.~1, 101--109. \MR{1088395 (92c:60097)}

\bibitem[VZ92]{VZ92}
H.~Vogt and A.~Zippelius, \emph{Invariant recognition in {P}otts glass neural
  networks}, J. Phys. A \textbf{25} (1992), no.~8, 2209--2226. \MR{1162879
  (93c:82044)}

\bibitem[{Woj}07]{2007PhDT.......216W}
R.~K. {Wojciechowski}, \emph{{Stochastic Completeness of Graphs}}, Ph.D.
  thesis, PhD Thesis, 2007, 2007.

\bibitem[Zem96]{Zem96}
A.~H. Zemanian, \emph{Random walks on finitely structured transfinite
  networks}, Potential Anal. \textbf{5} (1996), no.~4, 357--382. \MR{1401072
  (97g:94056)}

\end{thebibliography}

\end{document}